\documentclass{article}

\author{Ofir Gorodetsky, Brad Rodgers}

\newcommand{\Addresses}{{
  \bigskip
  \footnotesize

  \textsc{Mathematical Institute, Woodstock Road, Oxford OX2 6GG, UK}\par\nopagebreak
  \textit{E-mail address:} \texttt{ofir.goro@gmail.com}

  \medskip

  \textsc{Department of Mathematics and Statistics, Queen's University, Kingston, Ontario, K7L 3N6, Canada}\par\nopagebreak
  \textit{E-mail address:} \texttt{brad.rodgers@queensu.ca}

}}

\title{The variance of the number of sums of two squares in $\FF_q[T]$ in short intervals}
\date{}

\usepackage[margin=1.5in]{geometry}
\usepackage[all]{xy}
\usepackage{amssymb}
\usepackage{amsfonts}
\usepackage{amsthm}
\usepackage{amsmath}
\usepackage{hyperref}
\usepackage{mathtools}
\mathtoolsset{showonlyrefs}
\usepackage{url}
\usepackage{enumerate}
\usepackage{color}
\usepackage{subcaption}
\usepackage{graphicx}

\newtheorem*{thm*}{Theorem}
\newtheorem{thm}{Theorem}[section]

\newtheorem{lem}[thm]{Lemma}  
\newtheorem{proposition}[thm]{Proposition}
\newtheorem{cor}[thm]{Corollary}

\newtheorem{conj}{Conjecture} 

\theoremstyle{definition}

\theoremstyle{remark}
\newtheorem{remark}{Remark}

\newcommand{\FF}{\mathbb{F}}
\newcommand{\Var}{\mathrm{Var}}
\newcommand{\EE}{\mathbb{E}}

\numberwithin{equation}{section}

\begin{document}

\maketitle

\begin{abstract}
Consider the number of integers in a short interval that can be represented as a sum of two squares. What is an estimate for the variance of these counts over random short intervals? We resolve a function field variant of this problem in the large $q$ limit, finding a connection to the $z$-measures first investigated in the context of harmonic analysis on the infinite symmetric group. A similar connection to $z$-measures is established for sums over short intervals of the divisor functions $d_z(n)$. We use these results to make conjectures in the setting of the integers which match very well with numerically produced data. Our proofs depend on equidistribution results of N. Katz and W. Sawin.

\end{abstract}

\section{Introduction}\label{sec:funcs}
\subsection{Classical theory}\label{subsec:classical}
Consider the set $S = \{ n^2 +m^2 : n,m \in \mathbb{Z}\}$ of integers representable as sums of two perfect squares, and let $b \colon \mathbb{Z}_{>0} \to \mathbb{C}$ be the indicator function of the set $S$. Landau \cite{land} first gave an estimate for the number of positive integers no more than $x$ that belong to $S$:
\begin{equation}
\label{Landau_asymp}
B(x):= \sum_{1 \leq n \leq x} b(n) = K\frac{x}{\sqrt{\log x}} + O\Big( \frac{x}{(\log x)^{3/2}}\Big),
\end{equation}
where
\begin{equation}
\label{K_def}
K := \frac{1}{\sqrt{2}} \prod_{p \equiv 3 \bmod 4}(1-p^{-2})^{-1/2} \approx 0.764
\end{equation}
is the Landau-Ramanujan constant. Thus, roughly stated, the likelihood that a random integer near $X$ will be the sum of two squares is around $K/\sqrt{\log X}$. In fact there exists a more accurate approximation than \eqref{Landau_asymp} for the sum on the left hand side, and while we will come to this later we do not need this more precise information just yet.

In this paper we are motivated by the goal of understanding how many elements of $S$ lie in a random short interval $[n,n+H]$, where $n$ is chosen randomly from $[1,X]$ and $H = o(X)$. For $H$ proportional to $\sqrt{\log X}$, one expects the count to be distributed like a Poisson random variable:
\begin{conj}
\label{poisson}
For $\lambda$ a fixed parameter, let $H:= \lambda \sqrt{\log X}/K$. Then
\begin{equation}
\label{poisson_moments}
\frac{1}{X} \sum_{1 \leq n \leq X}\Big( \sum_{m \in [n,n+H]} b(m) \Big)^k \sim m_k(\lambda),
\end{equation}
where $m_k(\lambda)$ is the $k$-th moment of a Poisson distributed random variable with parameter $\lambda$.
\end{conj}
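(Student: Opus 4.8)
The plan is to establish \eqref{poisson_moments} by the method of moments, which reduces it to Hardy--Littlewood-type asymptotics for correlations of the indicator function $b$. Put $N(n) := \sum_{m \in [n,n+H]} b(m)$; expanding the $k$-th power gives
\[
\frac{1}{X}\sum_{1 \le n \le X} N(n)^k \;=\; \sum_{0 \le h_1,\dots,h_k \le H}\; \frac{1}{X}\sum_{1 \le n \le X} b(n+h_1)\cdots b(n+h_k) .
\]
Sorting the tuples $(h_1,\dots,h_k)$ by the set partition of $\{1,\dots,k\}$ that records which coordinates agree, a partition into $j$ nonempty blocks contributes, for each choice of $j$ distinct shifts $d_1,\dots,d_j \in \{0,\dots,H\}$, the average $\frac1X\sum_n b(n+d_1)\cdots b(n+d_j)$ (using $b^2=b$), and the number of such partitions is the Stirling number $S(k,j)$ of the second kind. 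Since the $k$-th moment of a Poisson random variable with parameter $\lambda$ is the Touchard polynomial $m_k(\lambda) = \sum_{j=0}^k S(k,j)\lambda^j$, the conjecture follows once one shows, for each fixed $j \ge 1$,
\[
\sum_{\substack{0 \le d_1,\dots,d_j \le H \\ d_i \ \text{distinct}}}\; \frac{1}{X}\sum_{1 \le n \le X} b(n+d_1)\cdots b(n+d_j) \;\sim\; \Big(\frac{HK}{\sqrt{\log X}}\Big)^{j} \;=\; \lambda^{j} .
\]

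When $j=1$ this is just Landau's estimate \eqref{Landau_asymp}, since $\tfrac1X\sum_n N(n) \sim (H+1)B(X)/X \sim HK/\sqrt{\log X} = \lambda = m_1(\lambda)$, the boundary contributions being $O(H^2) = o(X)$. For $j \ge 2$ the natural input is a correlation asymptotic of the shape
\[
\frac{1}{X}\sum_{1 \le n \le X} b(n+d_1)\cdots b(n+d_j) \;\sim\; \mathfrak{S}_j(d_1,\dots,d_j)\,\Big(\frac{K}{\sqrt{\log X}}\Big)^{j},
\]
uniformly for $0 \le d_i \ll \sqrt{\log X}$, where $\mathfrak{S}_j$ is a singular series built from the local densities of $\{n : n \in S \text{ and } n+d_i \in S \text{ for all } i\}$ at the primes $p \equiv 3 \bmod 4$; one must then prove that $\mathfrak{S}_j$ has average value $1$ over the $\sim H^j$ distinct ordered $j$-tuples in $\{0,\dots,H\}$, which would make the preceding sum asymptotic to $H^j(K/\sqrt{\log X})^j = \lambda^j$. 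Since there are $\asymp H^j$ summands, each with predicted main term of size $(\log X)^{-j/2}$, what is required is a genuine asymptotic --- not merely an upper bound --- with error term uniform in the shifts $d_i$, including in configurations where several of the $d_i$ are clustered; an equivalent packaging would be a Chen--Stein Poisson approximation, which again hinges on exactly this correlation information.

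The step I expect to be the principal obstacle is precisely this correlation estimate. Already the two-point case --- an asymptotic, with a saving uniform in $h$, for the count of $n \le X$ such that $n$ and $n+h$ are both sums of two squares --- lies beyond current technology, and the higher correlations are only more delicate; this is why \eqref{poisson_moments} is stated as a conjecture rather than proved. The route taken in this paper is instead to work over $\FF_q[T]$, where the indicator of sums of two squares is a concrete multiplicative function, a short interval is an affine space of polynomials, and the correlation averages can be evaluated in the large-$q$ limit through the equidistribution theorems of Katz and Sawin, which express them as matrix integrals over classical monodromy groups. Evaluating those integrals is what brings the $z$-measures into the picture; and it is this function field computation --- rather than a frontal assault over $\ZZ$ --- that we then use as the basis for the refined conjectures in the classical setting.
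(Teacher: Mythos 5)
This statement is a conjecture in the paper, and the paper offers no proof of it: it only records that the $k=1$ case follows from Landau's asymptotic \eqref{Landau_asymp}, while $k=2$ (Smilansky) and $k\geq 3$ (Freiberg--Kurlberg--Rosenzweig, in the spirit of Gallagher) are known only conditionally on Hardy--Littlewood type conjectures for $b$. Your moment-method reduction --- the Stirling/Touchard expansion $m_k(\lambda)=\sum_j S(k,j)\lambda^j$, the reduction to $j$-point correlations of $b$, and the need for uniform asymptotics with singular series averaging to $1$ --- is precisely the route taken in those cited conditional works, and you correctly identify the correlation asymptotics as the missing unconditional ingredient, so your assessment matches the paper's treatment of this statement.
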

Note that for $k=1$, we have $m_1(\lambda) = \lambda$. The $k=1$ case of \eqref{poisson_moments} is just the statement that on average $\lambda$ elements of $S$ lie in an interval of this size, and it is easy to see that this in fact follows from Landau's result \eqref{Landau_asymp}. For $k=2$, we have $m_2(\lambda) = \lambda$ also, and in this case \eqref{poisson_moments} was shown by Smilansky \cite{smilansky2013} to follow conditionally on Hardy-Littlewood type conjectures for the function $b(n)$. Moments with $k\geq 3$ were recently studied by Freiberg, Kurlberg and Rosenzweig \cite{freiberg2017} who showed conditioned on Hardy-Littlewood type conjectures that \eqref{poisson_moments} is true for all $k$. (This work is analogous to work of Gallagher \cite{gallagher1976distribution}, who showed conditionally on Hardy-Littlewood conjectures that the number of primes in short intervals of this sort also is distributed like a Poisson random variable.)

\begin{figure}[h]
  \includegraphics[width=\linewidth]{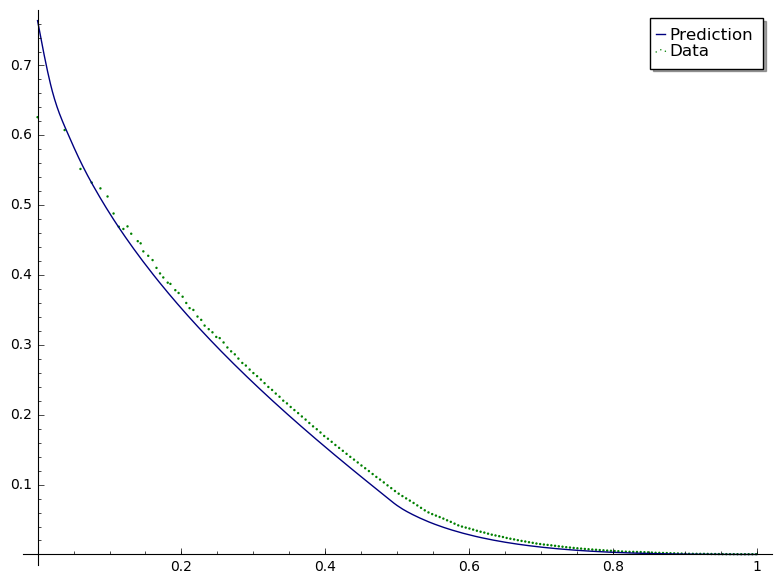}
  \caption{Numerically produced data compared to the $z$-measure induced prediction given in Conjecture \ref{b_variance_conj} for variance in short intervals. Let $V_b(X;H)$ be the variance of counts of $S$ in random short intervals $[x,x+H]$ for $x \leq X$. For $X = 10^8$ and $H\leq X$, set $\delta = \delta_H = \log(H)/\log(X)$. For a selection of $H$, we plot the points $(\delta, V_b(X,H)/(H/\sqrt{\log X}))$ under the label \textbf{data}, and the curve $(\delta, K\, G(1-\delta))$ under \textbf{prediction}. See Section \ref{sec:making_integer_conjectures} for further discussion.}
\label{fig:data_and_prediction_short_interval}
\end{figure}

Conjecture \ref{poisson} might lead one at first to believe more generally that seen at a large enough scale, the elements of $S$ that lie in a random short interval should resemble a Poisson point process, meaning that they should be laid down more or less at random, and the variance of their count should end up being roughly equal to the number expected to lie in the interval. In this paper we suggest that this pattern breaks down for short intervals larger than those considered in Conjecture \ref{poisson}; in particular we suggest for short intervals of size $X^\delta$ that the variance will be smaller than a straightforward extension of this conjecture would suggest. (There is an analogy to what happens for primes; see \cite{soundararajan2007distribution}.)
In fact we find a connection to the $z$-measures that have been investigated in the context of harmonic analysis on the symmetric group (see for instance \cite{kerov1993harmonic,olshanski2003introduction,borodin1998point}). It is common to heuristically justify deviations from the Poisson setting by making use of Hardy-Littlewood type conjectures, however we do not justify the presence of $z$-measures this way.

Instead, our evidence for their appearance is a theorem that we prove for a function field analogue of this problem. To state our result requires a little background that we give below; our main result is Theorem \ref{main_thm_limit}, and we make a conjecture for the integers for random short intervals in Conjecture \ref{b_variance_conj} and a similar conjecture for the integers for random sparse arithmetic progressions in Conjecture \ref{b_variance_conj_q}. These conjectures match up very well with numerical data (see Figures \ref{fig:data_and_prediction_short_interval} and \ref{fig:data_and_prediction_AP}).

\subsection{The function field analogy}\label{subsec:ffanalogue}

We let $q$ be an odd prime power and let $\FF_q[T]$ be the polynomial ring over the finite field $\FF_q$ with $q$ elements. Let $\mathcal{M}_{n,q}$ denote the set of monic polynomials of degree $n$ in $\FF_q[T]$, and let $\mathcal{M}_q = \cup_{n \ge 0} \mathcal{M}_{n,q}$ denote the set of all monic polynomials in $\FF_q[T]$. By a well-known analogy $\mathcal{M}_q$ serves as a substitute for the set of positive integers. In \cite{barysoroker2016on}, Bary-Soroker, Smilansky and Wolf studied an analogue of Landau's problem in $\FF_q[T]$ by introducing the following set and indicator function, which by abuse of notation we will also denote by $S$ and $b$: 
\begin{equation*}
S := \{ A^2 + TB^2:\; A,B \in \mathcal{M}_q\}
\end{equation*}
\begin{equation}\label{defb}
\begin{split}
& b\colon \mathcal{M}_q \to \mathbb{C}, \\
b(f) &= \begin{cases} 1 & \text{if } f \in S \\ 0 & \text{otherwise} \end{cases}.
\end{split}
\end{equation}
The analogy with integers can be seen in the following way: a positive integer lies in $S$ if an only if it is the norm of some element of $\mathbb{Z}[i]$, and an element of $\mathcal{M}_q$ lies in $S$ if and only if it is the norm of some element of $\FF_q[\sqrt{-T}]$. (One may also work with $A^2-\alpha B^2$ for some $\alpha \in \FF_q^{\times}\setminus (\FF_q^{\times})^2$, see \cite{leahey} and \cite[Sec.~5]{gorodetsky2017}.) 

In $\FF_q[T]$, the mean value of $b$ can be estimated as follows \cite[Thm.~1.1]{gorodetsky2017}:
\begin{equation}\label{Gorodetsky_asymp}
q^{-n}\sum_{f \in \mathcal{M}_{n,q}} b(f) = K_q  \binom{n-\frac{1}{2}}{n}  \left( 1 + O\left(\frac{1}{qn}\right) \right),
\end{equation}
where the implied constant is absolute, and the constant $K_q$ is positive and is an analytic function of $q^{-1}$. The constant $K_q$ is given by
\begin{equation*}
K_q = (1-q^{-1})^{-1/2} \prod_{\chi_2(P)=-1}(1-q^{-2\deg P})^{-1/2} = 1+O\left( \frac{1}{q} \right),
\end{equation*}
where $\chi_2$ is the unique non-principal quadratic Dirichlet character modulo $T$. By Stirling's formula, $\binom{n-1/2}{n} = 1/\sqrt{\pi n} + O(1/n^{3/2})$, and so \eqref{Gorodetsky_asymp} has a resemblance to \eqref{Landau_asymp}.

We take a notion of short intervals in $\FF_q[T]$ used prominently in \cite{keating2014variance}. Let $n$ be a positive integer, let $0 \le h \le n-1$, and let $A \in \mathcal{M}_{n,q}$. A short interval around $A$ of size $q^{h+1}$ is the subset 
\begin{equation*}
I(A;h) :=\{ A + g : \deg g \le h\} \subseteq \mathcal{M}_{n,q}.
\end{equation*}
Given an arithmetic function $\alpha\colon \mathcal{M}_{n,q} \to \mathbb{C}$, we let 
\begin{equation*}
\nu_{\alpha}(A;h) := \sum_{f \in I(A;h) } \alpha(f)
\end{equation*}
be the sum of $\alpha$ over the short interval $I(A;h)$. Note that $\nu_b(A;h)$ counts the number of elements of $S$ that lie in the short interval $I(A;h)$. We are interested in the variance of $\nu_{\alpha}(A; h)$ as $A$ varies, where we make the definition
\begin{equation}\label{des}
\Var_{A \in \mathcal{M}_{n,q}}\Big(\nu_{\alpha}(A; h)\Big) := \frac{1}{q^{n}} \sum_{A \in \mathcal{M}_{n,q}} \left| \nu_{\alpha}(A ;h) - \EE_{B \in \mathcal{M}_{n,q}}\big( \nu_\alpha(B;h)\big) \right|^2,
\end{equation}
where $\EE_{B \in \mathcal{M}_{n,q}} \big(\nu_\alpha(B;h)\big)$ is the mean value of $\nu_{\alpha}(B; h)$ which equals 
\begin{equation*}
\EE_{B \in \mathcal{M}_{n,q}} \big(\nu_\alpha(B;h)\big) := \frac{1}{q^n} \sum_{B \in \mathcal{M}_{n,q}} \nu_\alpha(B;h) = q^{h+1}\cdot \frac{1}{q^n} \sum_{f \in \mathcal{M}_{n,q}} \alpha(f).
\end{equation*}

Our main result is an evaluation of the variance of $\nu_b(A;h)$ in a large $q$ limit. The evaluation involves the $z$-measure on partitions introduced in \cite{kerov1993harmonic}. The $z$-measures arise in an evaluation of certain integrals over the unitary group (Theorem \ref{RMT_to_zmeasures}), which may be of independent interest. 

We give a brief introduction to these measures in Section \ref{sec:zmeasures}, but for the moment we discuss only the notation;  recall that we write $\lambda \vdash n$ to indicate that $\lambda$ is a partition of $n$ and $\lambda_1$ to indicate the largest part of a partition $\lambda$. For parameters $z \in \mathbb{C}$ and $n\in \mathbb{N}$, the $z$-measure is a probability measure $M_z^{(n)}(\lambda)$ on the set of partitions $\lambda \vdash n$. In fact these $z$-measures are a generalization of the well-known Plancherel measure on partitions. The notation $\mathbb{P}_z^{(n)}(\lambda_1 \leq N)$ denotes the obvious thing, namely
\begin{equation*}
\mathbb{P}_z^{(n)}(\lambda_1 \leq N):=\sum_{\substack{\lambda\vdash n \\ \lambda_1 \leq N}} M_z^{(n)}(\lambda).
\end{equation*}
The actual definition of these probability measures $M_z^{(n)}$ will be given in Section \ref{sec:zmeasures}. (By convention we set $M_z^{(0)}(\lambda_1 \leq N) = 1$ for any $N$.)  We show
\begin{thm}
\label{main_thm_Variance}
For a fixed odd prime $p$, and fixed $n \geq 6$, take $0 \leq h \leq n-7$ and let $N:= n-h-1$ and $q = p^k$. Define
\begin{equation*}
T(n;N):= \sum_{j=0}^n \frac{(1/4)_j (1/4)_{n-j}}{j! (n-j)!} \mathbb{P}_{1/2}^{(j)}(\lambda_1 \leq N-1) \mathbb{P}_{1/2}^{(n-j)}(\lambda_1 \leq N).
\end{equation*}
For $N(N-1) \geq n$,
\begin{equation}
\label{Variance_to_T}
\Var_{A \in \mathcal{M}_{n,q}}\Big( \nu_b(A;h)\Big) = q^{h+1} T(n;N) + o(q^{h+1}),
\end{equation}
as $q \rightarrow \infty$ (that is $k\rightarrow \infty$).
\end{thm}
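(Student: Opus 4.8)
The plan is to transfer the variance to a sum over short interval characters, rewrite each twisted sum via the Euler product of $b$, and feed the result into the equidistribution theorems of Katz and Sawin together with Theorem~\ref{RMT_to_zmeasures}. For the first reduction I would invoke the short-interval formalism of Keating and Rudnick. For monic $f$ write $f^{*}(T):=T^{\deg f}f(1/T)$, so $f^{*}(0)=1$; two elements of $\mathcal{M}_{n,q}$ lie in the same short interval $I(\,\cdot\,;h)$ if and only if their reversals agree modulo $T^{N+1}$, where $N=n-h-1$. Hence the (order $q^{N}$) group of short intervals is dual to the even Dirichlet characters modulo $T^{N+1}$ — those trivial on $\FF_q^{\times}$ — and the completely multiplicative functions $\psi_\chi(f):=\chi(f^{*})$ realize them. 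Orthogonality on this group gives, with the correct normalization,
\[
\Var_{A\in\mathcal{M}_{n,q}}\!\big(\nu_b(A;h)\big)=\frac{1}{q^{2N}}\sum_{\substack{\chi \bmod T^{N+1}\ \mathrm{even}\\ \chi\neq\chi_0}}\Big|\sum_{f\in\mathcal{M}_{n,q}}b(f)\,\psi_\chi(f)\Big|^{2},
\]
the trivial character being omitted because the mean is subtracted in \eqref{des}. The inner sum is $[u^{n}]\mathcal{B}_\chi(u)$, $\mathcal{B}_\chi(u):=\sum_f b(f)\psi_\chi(f)u^{\deg f}$.

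Next I would factor the twisted generating function. Because $b$ is the indicator of monic norms from $\FF_q[\sqrt{-T}]$, its Euler factors are ``square roots'' of those of $r_2=\mathbf 1\ast\chi_2$ (at split $P$: $(1-u^{\deg P})^{-1}=((1-u^{\deg P})^{-2})^{1/2}$; at inert $P$: both sides are $(1-u^{2\deg P})^{-1}$; at $P=T$: both are $(1-u)^{-1}$), and twisting by the completely multiplicative $\psi_\chi$ respects this, so
\[
\mathcal{B}_\chi(u)=\sqrt{L(u,\psi_\chi)\,L(u,\psi_\chi\chi_2)}\;C_\chi(u),
\]
with the branch normalized at $u=0$, where $C_\chi(u)=(1-u)^{-1/2}\prod_{\chi_2(P)=-1}(1-\psi_\chi(P)^{2}u^{2\deg P})^{-1/2}$ is analytic and uniformly bounded in $\chi$ and $q$ for $|u|\le q^{-1/2-\varepsilon}$, so $[u^{j}]C_\chi(u)\ll_{\varepsilon}q^{(1/2+\varepsilon)j}$. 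By Weil's theorem the inverse roots of $L(u,\psi_\chi)$ and of $L(u,\psi_\chi\chi_2)$ all have modulus $q^{1/2}$; for primitive $\chi$ these polynomials have degrees $N-1$ and $N$ in some order ($\psi_\chi$ is trivial on $\FF_q^{\times}$ while $\psi_\chi\chi_2$ is not, which is the source of the two constraints $\lambda_1\le N-1$ and $\lambda_1\le N$ in $T(n;N)$), and the $O(q^{N-1})$ non-trivial imprimitive $\chi$ contribute $O(q^{n+N-1})$ in total, hence $o(q^{h+1})$ after dividing by $q^{2N}$. Writing the inverse roots as $q^{1/2}e^{i\theta}$ and setting $u=w/\sqrt q$ identifies $\sqrt{L(u,\psi_\chi)L(u,\psi_\chi\chi_2)}$ with $\sqrt{\det(1-w\Theta_\chi)\det(1-w\Theta_{\chi\chi_2})}$ for the unitarized Frobenius classes $\Theta_\chi\in U(N-1)$, $\Theta_{\chi\chi_2}\in U(N)$; extracting $[w^{n}]$ and estimating the $C_\chi$-contribution termwise yields, uniformly over primitive even $\chi$,
\[
[u^{n}]\mathcal{B}_\chi(u)=q^{n/2}\,P_n(\Theta_\chi,\Theta_{\chi\chi_2})+O_{n,\varepsilon}\big(q^{(n-1)/2+\varepsilon n}\big),\qquad P_n(U,V):=[w^{n}]\sqrt{\det(1-wU)\det(1-wV)},
\]
and $P_n$ is symmetric in $U,V$, so which of the two $L$-functions sits in $U(N-1)$ is immaterial.

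Substituting into the character-sum formula and using $\#\{\text{primitive even }\chi\}=q^{N}(1+o(1))$ and $q^{n-N}=q^{h+1}$ gives $\Var_{A}(\nu_b(A;h))=q^{h+1}\,\mathbb{E}_{\chi}\big[|P_n(\Theta_\chi,\Theta_{\chi\chi_2})|^{2}\big]+o(q^{h+1})$, the average over primitive even $\chi\bmod T^{N+1}$. By Katz's equidistribution theorem for this family, together with Sawin's result that the $L$-functions attached to $\psi_\chi$ and to its fixed twist $\psi_\chi\chi_2$ are \emph{jointly} equidistributed, the pairs $(\Theta_\chi,\Theta_{\chi\chi_2})$ become Haar-equidistributed in $U(N-1)\times U(N)$ as $q\to\infty$; since $|P_n|^{2}$ is a fixed symmetric function of the eigenvalues ($n$ being fixed) this gives
\[
\mathbb{E}_{\chi}\big[|P_n(\Theta_\chi,\Theta_{\chi\chi_2})|^{2}\big]\ \longrightarrow\ \int_{U(N-1)\times U(N)}|P_n(U,V)|^{2}\,dU\,dV.
\]
To evaluate the integral I would expand $\sqrt{\det(1-wU)}=\sum_{j\ge0}a_j(U)w^{j}$, so $P_n(U,V)=\sum_j a_j(U)a_{n-j}(V)$; the Schur-function orthogonality over $U(M)$ that underlies Theorem~\ref{RMT_to_zmeasures} gives $\int_{U(M)}a_j(U)\overline{a_k(U)}\,dU=\delta_{jk}\,\tfrac{(1/4)_j}{j!}\,\mathbb{P}_{1/2}^{(j)}(\lambda_1\le M)$, using the hook-content identity $\sum_{\mu\vdash j}\tfrac{(-1/2)_\mu^{2}}{H(\mu)^{2}}=\tfrac{(1/4)_j}{j!}$ and the transpose symmetry $\mathbb{P}_{-1/2}^{(j)}(\ell(\lambda)\le M)=\mathbb{P}_{1/2}^{(j)}(\lambda_1\le M)$. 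Multiplying out and summing over $j$ produces exactly $T(n;N)$, completing the argument.

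The essential obstacle is the equidistribution in the last step, and particularly the joint independence of $L(u,\psi_\chi)$ and $L(u,\psi_\chi\chi_2)$. Multiplying $\psi_\chi$ by the fixed quadratic character $\chi_2$ could a priori correlate the two families of zeros, and showing it does not — so that the limiting object is the full product $U(N-1)\times U(N)$ rather than a diagonally embedded or otherwise smaller subgroup (a correlation would wreck the convolution structure of $T(n;N)$) — is exactly what the theorems of Katz and Sawin supply, and is what forces the hypotheses $N\ge6$ and $N(N-1)\ge n$. A lesser, routine, point is arranging the uniformity in $\chi$ (and over the imprimitive $\chi$) of the error terms generated by the correction factor $C_\chi$.
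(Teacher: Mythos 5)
Your overall architecture matches the paper's (variance to even-character sums via the Keating--Rudnick formalism, a square-root factorization of the twisted generating function, equidistribution, then Theorem \ref{RMT_to_zmeasures}), but two steps have genuine gaps. First, your bound on the correction factor $C_\chi$ is too weak to isolate the main term. The trivial estimate $[u^j]C_\chi \ll_\varepsilon q^{(1/2+\varepsilon)j}$ (Cauchy's estimate on $|u|=q^{-1/2-\varepsilon}$), inserted into the convolution $[u^n]\mathcal{B}_\chi=\sum_j [u^{n-j}]\sqrt{L(u,\psi_\chi)L(u,\psi_\chi\chi_2)}\cdot [u^j]C_\chi$, gives contributions of size $q^{n/2+\varepsilon j}$ from each $j\ge 1$, i.e.\ at least as large as the purported main term $q^{n/2}P_n(\Theta_\chi,\Theta_{\chi\chi_2})$; your displayed asymptotic with error $O_{n,\varepsilon}(q^{(n-1)/2+\varepsilon n})$ does not follow. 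One needs genuine (square-root) cancellation in the sums of $\psi_\chi(Q)^2$ over inert $Q$ hidden in $C_\chi$. The paper obtains this in Lemmas \ref{snbchivia} and \ref{lem:snbform} by the telescoping identity rewriting the inert-prime product as $\prod_{i\ge1}\big(L(u^{2^i},\chi^{2^i})/L(u^{2^i},\chi^{2^i}\chi_2)\big)^{2^{-i-1}}$, together with the observation that $\chi^{2^i}$ is never trivial (even characters modulo a power of $T$ have odd, $p$-power order), so the Riemann Hypothesis applies to those $L$-functions and yields $[u^j]C_\chi=O_{j,n}(q^{j/4})$, hence the error $O_n(q^{n/2-1/4})$ that makes the variance asymptotic go through.

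Second, the equidistribution you invoke is stronger than what the cited results provide. Sawin's theorem (Theorem \ref{UtimesU_equi}) does not give Haar equidistribution of the pairs $(\Theta_\chi,\Theta_{\chi\cdot\chi_2})$ in $U(N-1)\times U(N)$: it only equates the average of $f$ with the average of $\langle f\rangle(\det\Theta_\chi,\det\Theta_{\chi\cdot\chi_2})$, i.e.\ equidistribution within the $SU(N-1)\times SU(N)$-cosets, with no control on the joint distribution of the two determinants. Under your assumption the cross terms $A_jA_k\overline{A_{j'}A_{k'}}$ with $j\neq j'$ would vanish by homogeneity alone and the hypothesis $N(N-1)\ge n$ would be superfluous; in fact that hypothesis is not part of the equidistribution input (only $N\ge 6$ is), but is precisely what the paper's Lemma \ref{coset_cancellation} uses to kill those cross terms on the cosets: they are homogeneous Laurent polynomials of degrees $\pm\ell$ with $0<|\ell|\le n<N(N-1)$, and no such $\ell$ is divisible by both $N-1$ and $N$. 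Your proposal neither proves the joint equidistribution it assumes nor carries out this coset-cancellation argument, so the passage from the character average to $\int_{U(N-1)\times U(N)}|P_n(U,V)|^2\,dU\,dV$ is unjustified and the role of $N(N-1)\ge n$ is misattributed. The remaining ingredients — the reduction via reversed polynomials (where one must also verify, as the paper does, that $b$ is invariant under the reversal $f\mapsto T^{\deg f}f(1/T)$), the imprimitive-character estimate, and the evaluation of $\int_{U(M)}|A_j|^2$ via Theorem \ref{RMT_to_zmeasures} with $z=z'=1/2$ — are consistent with the paper's argument.
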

Here and throughout this paper $(x)_j := x(x+1)\cdots (x+j-1)$ is the \emph{rising factorial} Pochhammer symbol. 

\begin{remark} 
Likely this result is true for finite field sizes $q$ tending to infinity in an arbitrary fashion, but we are not able to prove it in this more general case. This owes to a crucial use in our proof of a recent theorem of Sawin \cite{sawin2018equidistribution}, which also requires this restriction.
\end{remark}

\begin{remark}
Theorem \ref{main_thm_Variance} complements work in \cite{bank2018sums}, which shows that in \emph{all} short intervals with $h\geq 2$, the count $\nu_b(A;h)$ is asymptotic to the mean as $q\rightarrow\infty$. 
\end{remark}

\begin{remark}
The reader should think of the sum defining $T(n;N)$ as being akin to a Riemann sum, so no single term predominates.
\end{remark}

\begin{remark}
Informally, one may think of the convolution structure in the sum defining $T(n;N)$ as arising because the arithmetic function $b$ can essentially be written as a convolution of the functions $d_{1/2}$ and $\chi_2 d_{1/2}$, where as we will discuss below, $\chi_2$ is a certain Dirichlet character and $d_{1/2}$ is a divisor function.
\end{remark}

We use this theorem to inform analogous conjectures in the setting of the integers in Section \ref{sec:making_integer_conjectures}. We require for this purpose an understanding of the limiting behavior of the expression on the right hand side as $h,n\rightarrow\infty$ with $h/n \rightarrow\delta\in (0,1)$. Note that if $h$ and $n$ are both sufficiently large and $h \sim \delta n$, then $N(N-1) \geq n$ and $N \geq 6$ will both be satisfied. The expression on the right hand side of \eqref{Variance_to_T} ends up being of order $O(q^{h+1}/\sqrt{n})$. In fact we can characterize the limiting behavior more exactly.

\begin{proposition}
\label{T_limit}
For $n, N \rightarrow \infty$ with $N/n\rightarrow s \in [0,1]$, we have
\begin{equation*}
T(n;N) = \frac{1}{\sqrt{\pi n}} G(s) + o\Big(\frac{1}{\sqrt{n}}\Big),
\end{equation*}
where for real $s$ we define
\begin{equation}
\label{G_def}
G(s) := \mathbb{P}\Big(1 - \frac{s}{\alpha_1} \leq Y \leq \frac{s}{\alpha_1'}\Big),
\end{equation}
for $Y$, $\alpha_1, \alpha_1'$ independent random variables, with $Y$ distributed as $\mathrm{Beta}(1/4,1/4)$ and $\alpha_1, \alpha_1'$ identically distributed copies of the largest part of the Thoma simplex distributed according to the spectral $z$-measure with parameters $1/2, 1/2$. (The spectral $z$-measure is defined in Section \ref{sec:zmeasures}.)
\end{proposition}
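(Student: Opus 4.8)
The plan is to treat the sum defining $T(n;N)$ as a Riemann sum in disguise and pass to the limit term by term, using the convergence of the finite-$n$ $z$-measures to the spectral $z$-measure on the Thoma simplex. First I would recall from Section \ref{sec:zmeasures} the asymptotic behavior of $\mathbb{P}_{1/2}^{(j)}(\lambda_1 \leq M)$ as $j \to \infty$ with $M/j$ tending to a limit: one knows that under the $z$-measure with parameters $1/2,1/2$ the rescaled largest part $\lambda_1/j$ converges in distribution to $\alpha_1$, the largest coordinate of a point of the Thoma simplex drawn according to the spectral $z$-measure. Thus, writing $j = xn$ with $x \in [0,1]$, we have $\mathbb{P}_{1/2}^{(j)}(\lambda_1 \leq N-1) \to \mathbb{P}(\alpha_1 \leq s/x)$ and $\mathbb{P}_{1/2}^{(n-j)}(\lambda_1 \leq N) \to \mathbb{P}(\alpha_1' \leq s/(1-x))$, uniformly enough away from the endpoints, with $\alpha_1,\alpha_1'$ independent copies.

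Next I would handle the binomial-type weights. By Stirling's formula, $(1/4)_j/j! = \Gamma(j+1/4)/(\Gamma(1/4)\,j!) = j^{-3/4}/\Gamma(1/4) + O(j^{-7/4})$, and similarly for the $n-j$ factor. Therefore
\begin{equation*}
\frac{(1/4)_j (1/4)_{n-j}}{j!(n-j)!} = \frac{1}{\Gamma(1/4)^2}\, j^{-3/4}(n-j)^{-3/4} + (\text{lower order}) = \frac{1}{\Gamma(1/4)^2 \, n^{1/2}} \cdot \frac{1}{n}\, \Big(\frac{j}{n}\Big)^{-3/4}\Big(1 - \frac{j}{n}\Big)^{-3/4} + \cdots,
\end{equation*}
so that after factoring out $n^{-1/2}$, the sum over $j$ becomes $\frac{1}{n}\sum_j f(j/n)$ for $f(x) = \Gamma(1/4)^{-2} x^{-3/4}(1-x)^{-3/4}\,\mathbb{P}(\alpha_1 \leq s/x)\,\mathbb{P}(\alpha_1' \leq s/(1-x))$, which is a Riemann sum converging to $\int_0^1 f(x)\,dx$. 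Recognizing $\int_0^1 \Gamma(1/4)^{-2} x^{-3/4}(1-x)^{-3/4}\,dx = B(1/4,1/4)/\Gamma(1/4)^2 = 1/\Gamma(1/2) = 1/\sqrt{\pi}$ as the normalization of the $\mathrm{Beta}(1/4,1/4)$ density, I would rewrite $\int_0^1 f(x)\,dx = \frac{1}{\sqrt{\pi}}\,\mathbb{E}_Y\big[\mathbb{P}(\alpha_1 \leq s/Y)\,\mathbb{P}(\alpha_1' \leq s/(1-Y))\big]$ with $Y \sim \mathrm{Beta}(1/4,1/4)$. Since $\{\alpha_1 \leq s/Y\}$ is the same as $\{Y \leq s/\alpha_1\}$ and $\{\alpha_1' \leq s/(1-Y)\}$ is $\{1 - s/\alpha_1' \leq Y\}$, and $\alpha_1,\alpha_1'$ are exchangeable, conditioning on $Y$ and using independence recovers exactly $\mathbb{P}(1 - s/\alpha_1' \leq Y \leq s/\alpha_1) = G(s)$. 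This yields $T(n;N) = \frac{1}{\sqrt{\pi n}} G(s) + o(n^{-1/2})$.

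The main obstacle is uniformity: the weight $x^{-3/4}(1-x)^{-3/4}$ is integrable but blows up at $x = 0,1$, precisely where $j$ or $n-j$ is small and where the distributional convergence $\lambda_1/j \to \alpha_1$ is least controlled (and where the error terms in the Stirling expansion of the Pochhammer ratios are not uniformly small relative to the main term). I would deal with this by splitting the sum at $j \le \epsilon n$, $\epsilon n < j < (1-\epsilon)n$, and $j \ge (1-\epsilon)n$. On the middle range the Riemann-sum argument and the $z$-measure convergence apply uniformly, giving the integral over $[\epsilon, 1-\epsilon]$. On the two tail ranges I would bound $\mathbb{P}_{1/2}^{(j)}(\lambda_1 \le \cdot)$ crudely by $1$ and use $\sum_{j \le \epsilon n} j^{-3/4}(n-j)^{-3/4} \ll n^{-3/4}\sum_{j\le \epsilon n} j^{-3/4} \ll n^{-3/4}(\epsilon n)^{1/4} = \epsilon^{1/4} n^{-1/2}$, which is $o(n^{-1/2})$ after first sending $n \to \infty$ and then $\epsilon \to 0$; the same bound controls the discarded Stirling error terms. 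A secondary point to check is that $G$ is continuous at the relevant $s$ (equivalently that $\alpha_1$ has no atoms in the pertinent range and that $\mathbb{P}(Y = s/\alpha_1) = 0$), which follows from $Y$ having a density, so that $s \mapsto G(s)$ is genuinely the limiting profile and the convergence $N/n \to s$ can be used freely. Finally, one should note that the condition $N(N-1)\ge n$ from Theorem \ref{main_thm_Variance} is automatically eventually satisfied when $N/n \to s$ with $s > 0$, and the case $s = 0$ is handled by the tail bound above (giving $G(0) = 0$), so no separate argument is needed there.
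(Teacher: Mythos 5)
Your proposal is correct and follows essentially the same route as the paper's proof: Stirling asymptotics for the ratios $(1/4)_j/j!$, the convergence $\mathbb{P}_{1/2}^{(j)}(\lambda_1\le M)\to\mathbb{P}(\alpha_1\le M/j)$ (using the continuity of $F_{1/2}$ from Theorem \ref{thm:z_scalinglimit} for uniformity), an $\epsilon$-splitting of the range of $j$ with the same tail bound $\sum_{j\le \epsilon n} j^{-3/4}(n-j)^{-3/4}\ll \epsilon^{1/4}n^{-1/2}$, a Riemann-sum passage to the integral, and the identification of $\sqrt{\pi}\,\Gamma(1/4)^{-2}t^{-3/4}(1-t)^{-3/4}$ as the $\mathrm{Beta}(1/4,1/4)$ density to recover $G(s)$. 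The only cosmetic difference is that you invoke exchangeability of $\alpha_1,\alpha_1'$ to match the stated form of $G(s)$, which is harmless.
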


Note that the random variable $Y \sim \mathrm{Beta}(1/4,1/4)$ is defined by $\mathbb{P}(a \leq Y \leq b) := \sqrt{\pi}\Gamma(1/4)^{-2} \int_a^b t^{-3/4}(1-t)^{-3/4}\, dt$ for $a,b \in [0,1]$, with $Y \in [0,1]$ almost surely.

The random variables $\alpha_1$ and $\alpha_1'$ also lie in $[0,1]$ almost surely, but an explicit characterization of their distribution takes more space to give. Historically they arose in formulas for the characters of certain important representations in the infinite symmetric group (see \cite{kerov1993harmonic}), but more concretely they are the limiting distribution of the random variable $\lambda_1/n$ for $\lambda \vdash n$ drawn according to the $z$-measure of Theorem \ref{main_thm_Variance}. That such a limiting distribution even exists is not obvious, but was shown in \cite{olshanski1998point}. We discuss $z$-measures on the Thoma simplex in more detail in Section \ref{sec:zdefinitions}.

Plainly for all $s \in [0,1]$ we have $0 \leq G(s) \leq 1$. It also is easy to see (i) that $G(s)$ is non-decreasing (from the definition) and (ii) that $G(1) =1$ (from the fact that $Y, \alpha_1, \alpha_1' \in [0,1]$ almost surely). As a corollary of very recent work on $z$-measures of Korotkikh \cite{korotkikh2020transition} and Olshanski \cite{olshanski2018topological} we also have,

\begin{thm}
\label{G_positivity}
$G(s) > 0$ for all positive $s$.
\end{thm}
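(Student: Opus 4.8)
The plan is to prove Theorem \ref{G_positivity} by showing that the event in the definition \eqref{G_def} of $G(s)$ has positive probability for every $s>0$. Recall that $G(s) = \mathbb{P}(1 - s/\alpha_1 \leq Y \leq s/\alpha_1')$, where $Y \sim \mathrm{Beta}(1/4,1/4)$ lives on $[0,1]$ with a density that is strictly positive on $(0,1)$, and $\alpha_1, \alpha_1'$ are i.i.d.\ copies of the largest coordinate of a point of the Thoma simplex drawn from the spectral $z$-measure with parameters $1/2,1/2$. Since $Y$ is independent of $(\alpha_1,\alpha_1')$ and has full support on $(0,1)$, it suffices to exhibit, for each $s>0$, a positive-probability event for the pair $(\alpha_1,\alpha_1')$ on which the interval $[1-s/\alpha_1,\, s/\alpha_1'] \cap (0,1)$ is a nonempty open interval; conditioning on such an event, the $Y$-probability is automatically positive, and then integrating out gives $G(s)>0$. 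Concretely, if on a positive-probability set we have $\alpha_1 < s$ (so that $1 - s/\alpha_1 < 0$) and simultaneously $\alpha_1' < s$ (so that $s/\alpha_1' > 1$), then the constraint on $Y$ becomes vacuous, i.e.\ it reduces to $Y \in [0,1]$, which holds almost surely. Hence everything comes down to the single claim:
\[
\mathbb{P}(\alpha_1 < s) > 0 \quad \text{for every } s > 0,
\]
i.e.\ that the largest part $\alpha_1$ of the spectral $z_{1/2,1/2}$-measure has $0$ in the support of its distribution (equivalently, arbitrarily small values occur with positive probability). Given this, $\mathbb{P}(\alpha_1 < s,\ \alpha_1' < s) = \mathbb{P}(\alpha_1<s)^2 > 0$ by independence, and we are done.

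The main work, therefore, is to establish that $\alpha_1$ can be arbitrarily small with positive probability — intuitively, that the spectral $z$-measure puts positive mass near the "most spread out" region of the Thoma simplex, where all coordinates are tiny. Here I would invoke the recent results of Korotkikh \cite{korotkikh2018transition} and Olshanski \cite{olshanski2018topological} cited just before the theorem: these works describe the boundary/topological support of the spectral $z$-measures and the transition to the Thoma simplex, and in particular should identify the support of the spectral $z_{z,z'}$-measure for the relevant parameter range. For parameters $z=z'=1/2$ (the principal series, which is genuinely a probability measure on the whole Thoma simplex), I expect the support to be full — or at least to contain points arbitrarily close to the degenerate point where all Frobenius coordinates vanish — so that $\{\alpha_1 < s\}$ is a nonempty open set intersected with the support and hence has positive measure. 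Alternatively, and perhaps more robustly, one can argue at the level of finite $n$: the random variable $\alpha_1$ is the a.s.\ limit (in the sense of \cite{olshanski1998point}) of $\lambda_1/n$ for $\lambda \vdash n$ sampled from $M_{1/2}^{(n)}$, so it suffices to show that for every $\epsilon>0$ there is, for all large $n$, a partition $\lambda\vdash n$ with $\lambda_1 \leq \epsilon n$ receiving $M_{1/2}^{(n)}$-mass bounded below uniformly; since $M_z^{(n)}$ is a deformation of Plancherel measure and assigns positive weight to every $\lambda\vdash n$ with a density explicitly given by a ratio of products of $(z)_{\cdot}$-type factors over hooks, positivity of the weight of near-square (or near-staircase) diagrams is immediate, and the only real point is to get a lower bound on the \emph{aggregate} mass of $\{\lambda_1 \leq \epsilon n\}$ that does not vanish as $n\to\infty$.

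I expect this last uniform lower bound to be the genuine obstacle: individual partition weights under $M_{1/2}^{(n)}$ decay super-polynomially, so one must either sum over an exponentially large family of diagrams with $\lambda_1$ small, or — cleaner — quote directly from \cite{korotkikh2018transition, olshanski2018topological} a support statement for the limiting spectral measure, which is precisely why the theorem is phrased as a corollary of their work. Once the support fact "$0 \in \mathrm{supp}(\mathrm{law}(\alpha_1))$" (equivalently $\mathbb{P}(\alpha_1<s)>0$ for all $s>0$) is in hand, the remainder is the elementary conditioning argument above: write
\[
G(s) \;\geq\; \mathbb{P}\big(\alpha_1 < s,\ \alpha_1' < s\big)\cdot \mathbb{P}\big(0 \leq Y \leq 1\big) \;=\; \mathbb{P}(\alpha_1<s)^2 \;>\;0,
\]
using independence of $Y$ from $(\alpha_1,\alpha_1')$ and $Y\in[0,1]$ a.s., which completes the proof.
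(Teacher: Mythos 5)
Your proposal is correct and follows essentially the same route as the paper: the paper also reduces everything to the fact that $\alpha_1$ and $\alpha_1'$ can be made arbitrarily small with positive probability (which is exactly the non-integer case $z=1/2$ of Theorem \ref{thm:F_z_positivity}, a corollary of the full-support result of \cite{olshanski2018topological}, with \cite{korotkikh2018transition}), and then concludes by an elementary independence argument with $Y$. The only cosmetic difference is that you choose $\alpha_1,\alpha_1'<s$ so that the constraint on $Y$ becomes vacuous, whereas the paper conditions on $1-s/\alpha_1\leq 1/4$ and $s/\alpha_1'\geq 3/4$ and uses $\mathbb{P}(1/4\leq Y\leq 3/4)>0$.
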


We prove Theorem \ref{G_positivity} in Appendix \ref{sec:Further}.

Using Theorem \ref{main_thm_Variance} and Proposition \ref{T_limit} together, we can write somewhat more succinctly,

\begin{thm}
\label{main_thm_limit}
For a fixed odd prime $p$ let $q = p^k$. If $h,n \rightarrow\infty$ in such a way that $h/n \rightarrow \delta \in (0,1)$, then
\begin{equation}
\lim_{q\rightarrow\infty} \frac{1}{q^{h+1}} \Var_{A \in \mathcal{M}_{n,q}}\Big( \nu_b(A;h)\Big) = \frac{G(1-\delta) + o(1)}{\sqrt{\pi n}},
\end{equation}
where the function $G(s)$ is defined in Proposition \ref{T_limit}.
\end{thm}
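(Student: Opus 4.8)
The plan is to combine Theorem \ref{main_thm_Variance} and Proposition \ref{T_limit} directly. Fix the odd prime $p$ and set $q = p^k$. Suppose $h, n \to \infty$ with $h/n \to \delta \in (0,1)$, and put $N := n - h - 1$. First I would observe that the hypotheses of Theorem \ref{main_thm_Variance} are eventually met: since $\delta \in (0,1)$ and $h \sim \delta n$, we have $N = n - h - 1 \sim (1-\delta) n \to \infty$, so in particular $N \geq 6$ and $N(N-1) \geq n$ hold for all sufficiently large $n$ (indeed $N(N-1) \sim (1-\delta)^2 n^2 \gg n$), and also $0 \leq h \leq n - 7$ holds eventually. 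Hence for all large $n$ we may apply \eqref{Variance_to_T} to get
\begin{equation*}
\Var_{A \in \mathcal{M}_{n,q}}\Big( \nu_b(A;h)\Big) = q^{h+1} T(n;N) + o(q^{h+1})
\end{equation*}
as $q \to \infty$, and therefore
\begin{equation*}
\lim_{q \to \infty} \frac{1}{q^{h+1}} \Var_{A \in \mathcal{M}_{n,q}}\Big( \nu_b(A;h)\Big) = T(n;N).
\end{equation*}

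Next I would extract the asymptotics of $T(n;N)$ as $n \to \infty$. Since $N = n - h - 1$ and $h/n \to \delta$, we have $N/n \to 1 - \delta =: s \in (0,1) \subseteq [0,1]$, so Proposition \ref{T_limit} applies with this value of $s$, giving
\begin{equation*}
T(n;N) = \frac{1}{\sqrt{\pi n}} G(1-\delta) + o\Big(\frac{1}{\sqrt{n}}\Big).
\end{equation*}
Combining the two displays yields
\begin{equation*}
\lim_{q \to \infty} \frac{1}{q^{h+1}} \Var_{A \in \mathcal{M}_{n,q}}\Big( \nu_b(A;h)\Big) = \frac{1}{\sqrt{\pi n}} G(1-\delta) + o\Big(\frac{1}{\sqrt{n}}\Big) = \frac{G(1-\delta) + o(1)}{\sqrt{\pi n}},
\end{equation*}
which is the claimed statement.

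The only subtlety worth spelling out — and the step I would be most careful about — is the order of limits and the precise meaning of the $o(1)$ in the final formula. Theorem \ref{main_thm_Variance} is a statement about $q \to \infty$ with $n, h$ fixed, while Proposition \ref{T_limit} concerns $n, N \to \infty$; the theorem to be proved takes $q \to \infty$ first (for each fixed pair $(h,n)$ along the chosen sequence) and only then lets $n \to \infty$. So the cleanest write-up is: for each $n$ in the sequence, first take $q \to \infty$ to land on the exact value $T(n;N)$, and then read off the $n \to \infty$ behaviour from Proposition \ref{T_limit}; the resulting $o(1)$ is a function of $n$ alone (equivalently of $\delta_n := h/n$, through $s = 1 - \delta_n \to 1 - \delta$) and tends to $0$ as $n \to \infty$. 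No new estimate is needed beyond what the two cited results already supply — this is purely an assembly step — so there is no real obstacle; one simply has to state the limiting regime unambiguously and verify that the side conditions $N \geq 6$ and $N(N-1) \geq n$ are automatically satisfied in the limit $h \sim \delta n$, as noted in the paragraph preceding the statement.
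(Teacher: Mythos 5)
Your proposal is correct and matches the paper's own (implicit) argument: the paper states Theorem \ref{main_thm_limit} as an immediate consequence of combining Theorem \ref{main_thm_Variance} and Proposition \ref{T_limit}, exactly as you do, including the observation that $h \sim \delta n$ eventually forces $N \geq 6$ and $N(N-1) \geq n$. Your extra care about the order of limits (taking $q \to \infty$ first for each fixed $(h,n)$ to land on $T(n;N)$, then letting $n \to \infty$) is the right reading of the statement and requires nothing beyond the two cited results.
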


\subsection{The divisor functions \texorpdfstring{$d_z(n)$}{dz(n)}}\label{subsec:divisorfuncs}
The result we prove here for the indicator function $b$ has a close relationship to a related result for the generalized divisor function $d_z$, especially for $z=\frac{1}{2}$. Recall that over the integers the function $d_z(n)$ is defined by the generating series
\begin{equation}
\label{divisor_generating}
\zeta(s)^z = \sum_n \frac{d_z(n)}{n^s}\quad \text{for}\; \Re(s) > 1.
\end{equation}
Here we let $\zeta(s)^z:=  \exp(z \log \zeta(s))$, where the logarithm is the principal branch such that $\log \zeta(s)$ is real for $s > 1$ real. For the sake of conceptual simplicity we will stick to the case that $z$ is a positive real number, though one could extend our results to a larger range of $z$. Using Euler products one sees that $d_z(n)$ is a multiplicative function and satisfies
\begin{equation*}
d_z(n) = \prod_{p^\ell || n} \binom{\ell+z-1}{\ell}.
\end{equation*}

For $f \in \mathcal{M}_q$ we define $d_z(f)$ similarly by
\begin{equation}
\label{d_z_def}
d_z(f) = \prod_{P^\ell || f} \binom{\ell+z-1}{\ell},
\end{equation}
where the product is over all $P^\ell$ dividing $f$ such that $P^{\ell+1}$ does not divide $f$, where $P$ is an irreducible monic polynomial and $\ell \geq 1$.

We show that the variance of short interval sums of the function $d_z$ is also related to the $z$-measures introduced in the last subsection.

\begin{thm}
\label{d_z_variance}
Fix $z > 0$. Take $0 \leq h \leq n-5$ and let $N:= n-h-1$. We have
\begin{equation}
\Var_{A \in \mathcal{M}_{n,q}}\Big( \nu_{d_z}(A;h)\Big) = q^{h+1} \frac{(z^2)_n}{n!} \mathbb{P}_z^{(n)}(\lambda_1 \leq N-1) + O_n(q^{h+1/2})
\end{equation}
as $q \to \infty$.
\end{thm}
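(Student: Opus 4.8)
The plan is to run the standard function-field short-interval machinery of Keating and Rudnick, reduce the variance of $\nu_{d_z}(A;h)$ to an average of twisted $L$-function coefficients over a family of Dirichlet characters, apply the equidistribution theorem of Katz to that family, and then recognise the resulting integral over the unitary group as the $z$-measure quantity in the statement --- this last identification being the special case of Theorem \ref{RMT_to_zmeasures} that we need.

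First I would peel off the polynomials divisible by $T$. Let $\nu^{\flat}(A;h) := \sum_{f \in I(A;h),\, T\mid f} d_z(f)$; the character machinery below only sees $\nu_{d_z}(A;h)-\nu^{\flat}(A;h)$, since every Dirichlet character to a power-of-$T$ modulus vanishes on multiples of $T$. Pulling out the exact power of $T$ dividing $f$ exhibits $\nu^{\flat}(A;h)$ as $\sum_{e\ge 1}\binom{e+z-1}{e}$ times a short-interval sum of $d_z$ (restricted to $T\nmid\,\cdot$) in $\mathcal M_{n-e,q}$ over an interval of size $\asymp q^{h+1-e}$; the crude Parseval/Weil bound gives variance $O_n(q^{h+1-e})$ for each of these, so $\Var(\nu^{\flat}(A;h)) = O_n(q^h)$ by Minkowski. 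Granting the bound $\Var(\nu_{d_z}(A;h)-\nu^{\flat}(A;h)) = O_n(q^{h+1})$ obtained below, Cauchy--Schwarz controls the covariance of the two pieces by $O_n(q^{h+1/2})$, and hence $\Var(\nu_{d_z}(A;h)) = \Var(\nu_{d_z}(A;h)-\nu^{\flat}(A;h)) + O_n(q^{h+1/2})$, reducing us to the latter.

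Under the involution $f(T)\mapsto T^n f(1/T)$ a short interval $I(A;h)$ becomes a residue class modulo $T^{N+1}$, $N = n-h-1$, and as in Keating--Rudnick this turns $\Var(\nu_{d_z}(A;h)-\nu^{\flat}(A;h))$ into $q^{h+1}q^{-N}\sum_{\chi\ne\chi_0}|\Psi_{d_z}(n,\chi)|^2$ summed over even Dirichlet characters $\chi$ modulo $T^{N+1}$, where $\Psi_{d_z}(n,\chi) := \sum_{f\in\mathcal M_{n,q}}d_z(f)\chi(f)$ and the principal term exactly cancels $|\EE\,(\nu_{d_z}-\nu^{\flat})|^2$. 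Since $d_z\chi$ is completely multiplicative on prime powers with local factor $(1-\chi(P)u^{\deg P})^{-z}$, one has $\sum_f d_z(f)\chi(f)u^{\deg f} = L(u,\chi)^z$, so $\Psi_{d_z}(n,\chi) = [u^n]L(u,\chi)^z$. For a nonprincipal primitive even character modulo $T^{N+1}$ one has $L(u,\chi) = \det(1-q^{1/2}u\,\Theta_\chi)$ with $\Theta_\chi\in U(N-1)$ the unitarised Frobenius (Weil), hence $q^{-n/2}\Psi_{d_z}(n,\chi) = c_n^{(z)}(\Theta_\chi)$, where $c_n^{(z)}(U):=[v^n]\det(1-vU)^z$ is, for each fixed $n$, a bounded class function on $U(N-1)$ (a polynomial in $\operatorname{tr}U,\dots,\operatorname{tr}U^n$). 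I would then invoke Katz's equidistribution theorem: as $q\to\infty$, the $\Theta_\chi$ over primitive even characters modulo $T^{N+1}$ equidistribute in $U(N-1)$ with error $O_N(q^{-1/2})$ --- here the hypothesis $N\ge 4$, i.e.\ $h\le n-5$, is used so that the monodromy is all of $U(N-1)$. As the nonprincipal imprimitive even characters number $O(q^{N-1})$ and contribute $O_n(1)$ apiece, this gives
\begin{equation*}
q^{-N}\sum_{\chi\ne\chi_0}\bigl|c_n^{(z)}(\Theta_\chi)\bigr|^2 = \int_{U(N-1)}\bigl|c_n^{(z)}(U)\bigr|^2\,dU + O_n(q^{-1/2}),
\end{equation*}
so $\Var(\nu_{d_z}(A;h)) = q^{h+1}\int_{U(N-1)}|c_n^{(z)}(U)|^2\,dU + O_n(q^{h+1/2})$.

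Finally I would evaluate the matrix integral. Expanding $\det(1-vU)^z = \exp(z\operatorname{tr}\log(1-vU))$ in the Schur basis gives $c_n^{(z)}(U) = (-1)^n\sum_{\lambda\vdash n}\bigl(H_\lambda^{-1}\prod_{(i,j)\in\lambda}(z+i-j)\bigr)s_\lambda(U)$, with $H_\lambda$ the product of hook lengths; orthonormality of Schur functions on $U(N-1)$ (where $s_\lambda\equiv 0$ once $\ell(\lambda)>N-1$), then passing to conjugate partitions, yields
\begin{equation*}
\int_{U(N-1)}\bigl|c_n^{(z)}(U)\bigr|^2\,dU = \sum_{\substack{\lambda\vdash n\\ \lambda_1\le N-1}}\frac{\prod_{(i,j)\in\lambda}(z+j-i)^2}{H_\lambda^2} = \frac{(z^2)_n}{n!}\,\mathbb P_z^{(n)}(\lambda_1\le N-1),
\end{equation*}
the last equality being the definition of the $z$-measure $M_z^{(n)}$; this is the instance of Theorem \ref{RMT_to_zmeasures} we need, and combining it with the previous paragraph proves the theorem. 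I expect the main difficulty to be not any single deduction but the uniform-in-$q$ bookkeeping of the error terms --- a clean treatment of the $T\mid f$ contribution and of the imprimitive characters, and citing the precise form of Katz's theorem applicable to the even-character family modulo $T^{N+1}$; granting Theorem \ref{RMT_to_zmeasures}, the $z$-measure identification is routine.
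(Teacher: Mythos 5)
Your argument is essentially correct, but it is not the route the paper takes for this particular theorem: the paper's own proof is a short reduction to the literature. It expands $d_z$ on squarefree $f$ in the basis of factorization-type characters, $d_z(f)=\sum_{\lambda\vdash n}\tfrac{\dim(\lambda)}{n!}\prod_{\square\in\lambda}(z+c(\square))X^{\lambda}(f)$ (proved for integer $z$ via \eqref{schur_eval} and extended to all $z$ by polynomial interpolation), and then invokes Theorem 3.1 of \cite{rodgers2018arithmetic}, which already packages the short-interval-to-character-sum reduction, the equidistribution input, the treatment of non-squarefree $f$, and the output $q^{h+1}\sum_{\lambda\vdash n,\,\lambda_1\le N-1}|\hat{d_z}(\lambda)|^2+O_n(q^{h+1/2})$; the $z$-measure expression is then immediate from \eqref{z_measure_def}. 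What you do instead --- the Keating--Rudnick involution to even characters modulo $T^{N+1}$, the identity $\sum_f d_z(f)\chi(f)u^{\deg f}=L(u,\chi)^z$, equidistribution, and Theorem \ref{RMT_to_zmeasures} --- is precisely the template the paper uses for Theorem \ref{main_thm_Variance}, applied to the simpler single-character family; it gives a legitimate, more self-contained proof, at the cost of redoing bookkeeping that the citation route gets for free (your peeling off of $T\mid f$ and of imprimitive characters is fine, and could even be avoided by applying \cite[Lem.~5.4]{keating2016squarefree} directly with the weights $d_z(T^{n-m})$, as in Lemma \ref{lemshortvar}).

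Two details need patching, though neither is fatal. First, for an even primitive $\chi$ modulo $T^{N+1}$ one has $L(u,\chi)=(1-u)\det(1-u\sqrt{q}\,\Theta_\chi)$ with $\Theta_\chi\in U(N-1)$; you dropped the trivial zero, so in fact $q^{-n/2}[u^n]L(u,\chi)^z=c^{(z)}_n(\Theta_\chi)+O_n(q^{-1/2})$, and this correction (compare Lemma \ref{lem:snbform} and Proposition \ref{thmb}) is exactly the kind of term the $O_n(q^{h+1/2})$ error must absorb. Second, the unconditional, all-characteristic equidistribution theorem of Katz for this even family (the input used in \cite{keating2018sums} and \cite{rodgers2018arithmetic}) is equidistribution in the projective unitary group $PU(N-1)$; your assertion that the monodromy is all of $U(N-1)$ claims more than the standard citation provides. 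This costs nothing here, since $|c^{(z)}_n(U)|^2$ is invariant under $U\mapsto e^{i\theta}U$, so the $PU$ statement yields the same matrix integral; but reaching instead for Sawin's theorem \cite{sawin2018equidistribution} would force fixed characteristic and lose the arbitrary-$q$ uniformity that the theorem asserts. (Minor: in your displayed variance identity the summand should be $|q^{-n/2}\Psi_{d_z}(n,\chi)|^2$, equivalently the prefactor $q^{-2N}$ with the unnormalized sum; as written the powers of $q$ do not balance, though your subsequent steps use the normalized quantity.)
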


\begin{remark}
This generalizes a result of Keating, Rodgers, Roditty-Gershon and Rudnick \cite[Thm.~1.2]{keating2018sums}, who consider the result for $z \in \mathbb{N}$ and are able to find a simpler expression for the right hand side in this case.
\end{remark}

\begin{remark}
In Theorem \ref{d_z_variance}, $q$ can grow to infinity in an arbitrary fashion; we do not require that $q = p^k$ for a fixed prime $p$ as we did in Theorem \ref{main_thm_Variance}.
\end{remark}

As before we can characterize the limiting behavior of the right hand size as $n$ and $N$ grow:

\begin{proposition}
\label{Thoma_limit}
Fix $z > 0$ with $z\neq 1$. For $n, N \rightarrow \infty$ with $N/n \rightarrow s \in (0,1)$, we have
$$
\frac{(z^2)_n}{n!} \mathbb{P}_z^{(n)}(\lambda_1 \leq N-1) = \frac{n^{z^2-1}}{\Gamma(z^2)} F_z(s) + o(n^{z^2-1}),
$$
where for real $s$ we define
\begin{equation}
\label{CDF_defn}
F_z(s) := \mathbb{P}(\alpha_1^{(z)} \leq s),
\end{equation}
for $\alpha_1^{(z)}$ the largest part of the Thoma simplex distributed according to the spectral $z$-measure with parameters $z,z$.
\end{proposition}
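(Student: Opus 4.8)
The plan is to factor the left-hand side into a smooth prefactor and a probability, and to handle each separately. For the prefactor, write $(z^2)_n = \Gamma(n+z^2)/\Gamma(z^2)$ and $n!=\Gamma(n+1)$; the classical ratio asymptotic $\Gamma(n+z^2)/\Gamma(n+1)=n^{z^2-1}(1+O_z(1/n))$ (a consequence of Stirling's formula) gives
\begin{equation*}
\frac{(z^2)_n}{n!}=\frac{n^{z^2-1}}{\Gamma(z^2)}\bigl(1+O_z(\tfrac1n)\bigr).
\end{equation*}
Since $0\le \mathbb{P}_z^{(n)}(\lambda_1\le N-1)\le 1$, it therefore suffices to prove that
\begin{equation*}
\mathbb{P}_z^{(n)}(\lambda_1\le N-1)\longrightarrow F_z(s)=\mathbb{P}\bigl(\alpha_1^{(z)}\le s\bigr)\qquad\text{as }n,N\to\infty,\ N/n\to s\in(0,1),
\end{equation*}
for then multiplying the two displays yields the proposition with error term $o(n^{z^2-1})$.

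For the convergence of these probabilities I would appeal to the scaling limit of the $z$-measures on partitions. By a theorem of Olshanski \cite{olshanski1998point} (see also \cite{borodin1998point} and Section~\ref{sec:zdefinitions}), if $\lambda\vdash n$ is distributed according to $M_z^{(n)}$ then the rescaled parts $(\lambda_1/n,\lambda_2/n,\dots)$ converge in distribution to the spectral $z$-measure with parameters $z,z$ on the Thoma simplex; in particular the rescaled largest part $\lambda_1/n$ converges in distribution to $\alpha_1^{(z)}$. Since $(N-1)/n\to s$ as well, $\mathbb{P}_z^{(n)}(\lambda_1\le N-1)=\mathbb{P}_z^{(n)}(\lambda_1/n\le (N-1)/n)$, and the standard sandwich argument --- squeezing this quantity between $\mathbb{P}_z^{(n)}(\lambda_1/n\le s-\varepsilon)$ and $\mathbb{P}_z^{(n)}(\lambda_1/n\le s+\varepsilon)$ for small $\varepsilon>0$ and letting $\varepsilon\to0$ through continuity points of the limit law --- shows that the limit is $F_z(s)$, provided $s$ is a continuity point of the distribution function of $\alpha_1^{(z)}$.

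The step that needs the most care is thus the continuity of $F_z$ at each $s\in(0,1)$, i.e.\ that $\alpha_1^{(z)}$ has no atom in the open interval. This is where the hypothesis $z\neq1$ enters: for $z=1$ the measure $M_z^{(n)}$ collapses to a point mass and the statement is verified directly, while for $z\neq1$ the law of $\alpha_1^{(z)}$ is continuous --- in fact absolutely continuous on $(0,1)$ --- which one can read off from the known description of the spectral $z$-measure, e.g.\ from the determinantal correlation functions of the associated discrete point process (Borodin--Olshanski) or from the analytic information about the largest coordinate $\alpha_1$ used elsewhere in this paper (Olshanski \cite{olshanski2018topological}, Korotkikh \cite{korotkikh2018transition}). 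With this input the sandwich closes. An alternative, more self-contained route would avoid quoting weak convergence: using the explicit product formula for $M_z^{(n)}(\lambda)$, whose normalization contributes exactly the factor $n!/(z^2)_n$, one can write the left-hand side as $\sum_{\lambda\vdash n,\,\lambda_1\le N-1}(\dim\lambda/n!)^2\prod_{(i,j)\in\lambda}(z+j-i)^2$ and estimate this truncated sum through the $U(N)$ integral underlying Theorem~\ref{RMT_to_zmeasures}; one expects that integral, with $N\sim sn$, to reproduce $n^{z^2-1}F_z(s)/\Gamma(z^2)$, but the argument via the known scaling limit is shorter once the continuity of $F_z$ is granted.
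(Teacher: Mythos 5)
Your proposal is correct and follows essentially the same route as the paper: factor out the Pochhammer asymptotic $(z^2)_n/n! \sim n^{z^2-1}/\Gamma(z^2)$ and then use the weak convergence $\lambda_1/n \to \alpha_1^{(z)}$ together with the continuity of $F_z$ (which is exactly what Theorem \ref{thm:z_scalinglimit} supplies for $z>0$, $z\neq 1$) to get $\mathbb{P}_z^{(n)}(\lambda_1 \leq N-1) = F_z(s)+o(1)$. Note only that the paper establishes (and needs) just continuity of $F_z$, not the absolute continuity you assert in passing, and your sandwich argument already goes through with continuity alone.
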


For all $s \in [0,1]$ obviously $0\leq F_z(s) \leq 1$, and moreover (i) $F_z(s)$ is non-decreasing and (ii) $F_z(1) = 1$. The positivity of the functions $F_z(s)$ splits into two cases:

\begin{thm}
\label{thm:F_z_positivity}
For $k\geq 2$ an integer, $F_k(s)$ vanishes for $s \in [0,1/k]$ and is positive for $s > 1/k$.

For $z > 0$ with $z$ not an integer, $F_z(s) > 0$ for all positive $s$.
\end{thm}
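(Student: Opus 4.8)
The plan is to translate the positivity question into a question about the support of the spectral $z$-measure on the Thoma simplex, and then separate the two cases according to whether $z$ is an integer. Recall that $F_z(s) = \mathbb{P}(\alpha_1^{(z)} \le s)$, so $F_z(s) > 0$ precisely when the largest coordinate $\alpha_1$ of a Thoma-simplex point drawn from the spectral $z$-measure with parameters $z,z$ can be as small as $s$ with positive probability — equivalently, when every neighborhood (in the appropriate topology) of the set $\{\alpha_1 \le s\}$ receives positive mass. So the theorem amounts to identifying the essential infimum of $\alpha_1$ under the spectral $z$-measure: the claim is that this infimum is $1/k$ when $z = k \in \mathbb{Z}_{\ge 2}$, and $0$ when $z$ is not an integer.

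First I would handle the non-integer case, which I expect to follow most directly from the recent structural results of Korotkikh \cite{korotkikh2018transition} and Olshanski \cite{olshanski2018topological} already invoked for Theorem \ref{G_positivity}. The point is that for non-integral $z$ the spectral $z$-measure has full support (or at least support reaching down to the locus where $\alpha_1 = 0$, i.e. the "all-$\beta$" or purely-continuous part of the Thoma simplex); concretely, the measures $M_z^{(n)}$ put positive weight on partitions $\lambda \vdash n$ with $\lambda_1$ as small as roughly $\sqrt{n}$ (in fact $\mathbb{P}_z^{(n)}(\lambda_1 \le N) > 0$ for all $N$ with $N(N+1) \ge n$ or so, from the explicit product formula for $M_z^{(n)}$ together with the non-vanishing of the relevant Pochhammer factors when $z \notin \mathbb{Z}$), and passing to the $n\to\infty$ limit via Proposition \ref{Thoma_limit} (or the results establishing the limit $\alpha_1^{(z)}$) gives $F_z(s) > 0$ for every $s > 0$. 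The key input here is that none of the combinatorial weights degenerate to zero — this is exactly where $z \notin \mathbb{Z}$ is used.

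For the integer case $z = k \ge 2$ the mechanism is different and is, I expect, the main obstacle. When $z = k$ is a positive integer, $d_k$ is the ordinary $k$-fold divisor function, and the relevant degeneration is a hard constraint: a monic polynomial of degree $n$ has at most... well, $d_k(f)$ is supported on $f$ with enough factors, but more to the point, in the $z$-measure / RMT picture the parameter $z=k$ forces the spectral measure to live on configurations with at most $k$ nonzero $\alpha$-coordinates (the $z$-measure with integer parameter $k$ is supported on partitions with at most $k$ rows, equivalently its Thoma-simplex limit has $\alpha_{k+1} = \alpha_{k+2} = \cdots = 0$ and $\beta \equiv 0$). Since the $\alpha_i$ are nonincreasing and sum to at most $1$, this immediately gives $\alpha_1 \ge 1/k$ deterministically, hence $F_k(s) = 0$ for $s < 1/k$; one must also check the boundary value $F_k(1/k)$, which should vanish because the event $\alpha_1 = 1/k$ (forcing $\alpha_1 = \cdots = \alpha_k = 1/k$) is a single point of measure zero for the continuous spectral measure. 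For $s > 1/k$ one then needs positivity: here I would again use the explicit formula for $M_k^{(n)}(\lambda)$ restricted to $\lambda$ with at most $k$ rows, show $\mathbb{P}_k^{(n)}(\lambda_1 \le N) > 0$ whenever $N \ge n/k$ (achievable: take $\lambda$ with all rows nearly equal, $\lambda_i \approx n/k$), and pass to the limit using Proposition \ref{Thoma_limit}. The delicate step is making the limiting transition rigorous near the threshold $s = 1/k$ — i.e. ruling out that the limiting CDF $F_k$ has a jump or stays flat just above $1/k$ — and for this I would lean on the transition-function/topological-boundary description of the spectral $z$-measures in \cite{korotkikh2018transition, olshanski2018topological}, which is precisely the tool that lets one control the bottom edge of the support of $\alpha_1$ and is why that work is cited for the companion statement Theorem \ref{G_positivity}.
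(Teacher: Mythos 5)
Your non\hyp{}integer case follows the paper's own route: positivity of $F_z(s)$ for every $s>0$ is deduced from the main theorem of \cite{olshanski2018topological}, namely that every non-degenerate spectral $z$-measure (which includes all non-integer $z>0$) has the whole Thoma simplex as its topological support. But be careful with the ``concrete'' parenthetical you offer as an alternative: knowing $\mathbb{P}_z^{(n)}(\lambda_1\le N)>0$ at every finite $n$ and then ``passing to the $n\to\infty$ limit'' proves nothing, because a limit of positive quantities can be zero. Indeed, for integer $k$ one has $\mathbb{P}_k^{(n)}(\lambda_1\le \lceil n/k\rceil)>0$ for every $n$, yet $F_k(1/k)=0$; so finite-$n$ positivity cannot substitute for a statement about the support of the limiting measure.

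The genuine gap is in the integer case $z=k\ge 2$, at exactly the step you flag as delicate: proving $F_k(s)>0$ for $s>1/k$, i.e.\ ruling out that $F_k$ stays flat just above the threshold. Your proposed remedy --- leaning on \cite{korotkikh2018transition,olshanski2018topological} --- does not apply there: those results concern the non-degenerate (principal series) $z$-measures, whereas integer $k$ belongs to the degenerate series, whose spectral measure is supported on a proper subset of the Thoma simplex (at most $k$ nonzero $\alpha$-coordinates and no $\beta$'s), so ``full support'' fails by construction and no control of the lower edge of $\alpha_1$ for this series is supplied by those papers. The paper closes this case by a completely explicit route, which your proposal does not use: Proposition \ref{prop:integral_rep} identifies $F_k(s)=(k^2-1)!\,s^{k^2-1}\gamma_k(s^{-1})$ with the random-matrix quantity $\gamma_k$ of \cite{keating2018sums} (via Theorem \ref{RMT_to_zmeasures} and the asymptotics of $I_k(n;N)$), and the integral representation \eqref{gamma_integral} shows directly that $\gamma_k(c)$ is supported on $[0,k]$ and strictly positive on the interior, which yields simultaneously the vanishing on $[0,1/k]$ and the positivity for $s>1/k$. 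Your observation that the degenerate measure lives on partitions with at most $k$ rows, so that $\lambda_1\ge n/k$ deterministically and hence $\alpha_1^{(k)}\ge 1/k$ almost surely, does correctly give $F_k(s)=0$ for $s<1/k$; the boundary value $F_k(1/k)=0$ requires the absence of an atom at $\alpha_1=1/k$, which you assert but do not prove (it too falls out of the explicit formula). The positivity half of the integer case, however, is not established by your argument as written.
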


For $k \ge 2$, Theorem \ref{thm:F_z_positivity} is just a restatement of work in \cite{keating2018sums}. For non-integer $z$, Theorem \ref{thm:F_z_positivity} is a corollary of recent work in \cite{olshanski2018topological, korotkikh2020transition}. We discuss this in more detail in Appendix \ref{sec:Further}, along with more properties of the function $F_z(s)$.

Using Theorem \ref{d_z_variance} and Proposition \ref{Thoma_limit} together, we have

\begin{thm}
\label{d_z_limit}
Fix $z > 0$ with $z\neq 1$. If $h,n \rightarrow\infty$ in such a way that $h/n \rightarrow \delta \in (0,1)$, then
\begin{equation}
\lim_{q\rightarrow\infty} \frac{1}{q^{h+1}}\Var_{A \in \mathcal{M}_{n,q}}\Big( \nu_{d_z}(A;h)\Big) = \Big(\frac{F_z(1-\delta)}{\Gamma(z^2)}  + o(1)\Big) n^{z^2-1}.
\end{equation}
\end{thm}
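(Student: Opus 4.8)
The plan is to obtain Theorem~\ref{d_z_limit} by feeding Proposition~\ref{Thoma_limit} into the $q\to\infty$ consequence of Theorem~\ref{d_z_variance}; the only genuine subtlety is that the limits $q\to\infty$ and $n\to\infty$ must be taken in that order, so I would organize the argument as a clean two-step iterated limit rather than a single combined estimate.

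First I would fix $z>0$ and, temporarily, fix $n$ and $h$ with $0\le h\le n-5$, writing $N:=n-h-1$. Dividing the identity of Theorem~\ref{d_z_variance} through by $q^{h+1}$ gives
\[
\frac{1}{q^{h+1}}\Var_{A\in\mathcal{M}_{n,q}}\big(\nu_{d_z}(A;h)\big)=\frac{(z^2)_n}{n!}\,\mathbb{P}_z^{(n)}(\lambda_1\le N-1)+O_n(q^{-1/2}).
\]
Since the implied constant depends only on $n$ (with $h$ bounded in terms of $n$), the error term tends to $0$ as $q\to\infty$ with $n,h$ held fixed, and we obtain the exact value
\[
\lim_{q\to\infty}\frac{1}{q^{h+1}}\Var_{A\in\mathcal{M}_{n,q}}\big(\nu_{d_z}(A;h)\big)=\frac{(z^2)_n}{n!}\,\mathbb{P}_z^{(n)}(\lambda_1\le N-1).
\]
This is now a genuine equality, free of error terms, valid for every admissible pair $(n,h)$, which is precisely what lets us send $n\to\infty$.

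Next I would let $h,n\to\infty$ with $h/n\to\delta\in(0,1)$. Then $N/n=(n-h-1)/n\to 1-\delta$, which lies in $(0,1)$, and $h\le n-5$ holds automatically for all large $n$ because $h=\delta n+o(n)$. Hence Proposition~\ref{Thoma_limit} (whose hypothesis $z\neq1$ is exactly the one assumed in Theorem~\ref{d_z_limit}) applies with $s=1-\delta$, giving
\[
\frac{(z^2)_n}{n!}\,\mathbb{P}_z^{(n)}(\lambda_1\le N-1)=\frac{n^{z^2-1}}{\Gamma(z^2)}F_z(1-\delta)+o(n^{z^2-1}).
\]
Combining this with the previous display yields
\[
\lim_{q\to\infty}\frac{1}{q^{h+1}}\Var_{A\in\mathcal{M}_{n,q}}\big(\nu_{d_z}(A;h)\big)=\Big(\frac{F_z(1-\delta)}{\Gamma(z^2)}+o(1)\Big)n^{z^2-1},
\]
which is the assertion of Theorem~\ref{d_z_limit}.

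I do not expect any substantive obstacle: all of the analytic content is already contained in Theorem~\ref{d_z_variance} and Proposition~\ref{Thoma_limit}, and what remains is bookkeeping. The one point to be careful about is the iterated-limit structure just noted — Theorem~\ref{d_z_variance} controls the inner limit $q\to\infty$ only for fixed $n$ and with an $n$-dependent error, so it cannot be merged with the asymptotic-in-$n$ statement of Proposition~\ref{Thoma_limit} under a single limit sign; extracting the exact $q\to\infty$ value for each $(n,h)$ first, and only then invoking Proposition~\ref{Thoma_limit}, is what makes the argument legitimate. (A version uniform in the manner in which $q\to\infty$ relative to $n$ would instead require explicit control of the $n$-dependence of the error in Theorem~\ref{d_z_variance}, but that is not what is claimed here.)
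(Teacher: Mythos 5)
Your proposal is correct and follows essentially the same route as the paper, which presents Theorem \ref{d_z_limit} precisely as the combination of Theorem \ref{d_z_variance} (divide by $q^{h+1}$, let $q\to\infty$ for fixed $n,h$ so the $O_n(q^{-1/2})$ error vanishes) with Proposition \ref{Thoma_limit} applied at $s=1-\delta$. Your explicit remark about the iterated-limit order is exactly the correct reading of the statement and is consistent with how the paper intends it.
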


For the function $d_z(n)$, we discuss conjectures for the integers suggested by Theorems \ref{d_z_variance} and \ref{d_z_limit} in Section \ref{sec:making_integer_conjectures2}.

\subsection{Acknowledgements}

For discussions related to this paper, the authors would like to thank Alexei Borodin, P\"ar Kurlberg, Grigori Olshanski, Zeev Rudnick, Sasha Sodin and Eugene Strahov. The approach in Section \ref{C_less_Z} was outlined to us by Borodin. We also thank Adar Kahana for valuable help in producing some of the numerical graphs in this work. Finally we thank the referee for a number of helpful comments and corrections. The first author was supported by the European Research Council under the European Union's Seventh Framework Programme (FP7/2007-2013) / ERC grant agreement n$^{\text{o}}$ 320755. The second author was partly supported by the US NSF grants DMS-1701577 and DMS-1854398 and an NSERC grant.

\section{Dirichlet characters}
\subsection{The strategy of the proof}\label{subsec:strategy}

In this section we recall and develop the basic machinery necessary to prove Theorem \ref{main_thm_Variance}. (Theorem \ref{d_z_variance} by contrast is somewhat easier and will be seen to follow from machinery that has been developed elsewhere.) Our basic strategy is this: we relate the variance being considered to averages of sums of arithmetic functions against characters lying in a certain family; we relate these sums to sums over the zeros of $L$-functions; and finally we make crucial use of a recent theorem of W. Sawin characterizing the limiting distribution of zeros for the family of $L$-functions we have made use of. This relates the desired variance to a random matrix integral -- to our knowledge not previously considered in the literature -- and using symmetric function theory we give a combinatorial evaluation of this integral, relating it to the aforementioned $z$-measures.
\subsection{From short interval variance to character sums}
Recall that a Dirichlet character modulo a polynomial $Q \in \FF_q[T] \setminus \{0\}$ is a function $\chi \colon\FF_q[T] \to \mathbb{C}$ satisfying the properties $\chi(fg)=\chi(f)\chi(g)$ for $f,g \in \FF_q[T]$ (i.e. $\chi$ is completely multiplicative), $\chi(f) \neq 0$ if and only if $\gcd(f,Q)=1$, and $\chi(f)=\chi(g)$ whenever $f\equiv g \bmod Q$. The unique Dirichlet character modulo $Q$ which assumes the value $1$ on every polynomial coprime to $Q$ is called the principal character modulo $Q$, and is usually denoted by $\chi_0$ when it is understood what is our $Q$. We say that a character $\chi$ is \emph{even} if $\chi(c) = 1$ for all nonzero $c \in \FF_q$ (see e.g. \cite[Sec. 3.2]{keating2014variance}).

\begin{lem}\label{lemshortvar}
Let $h,n$ be two integers satisfying $0 \le h \le n-2$. Given a Dirichlet character $\chi\colon \FF_q[T] \to \mathbb{C}$ ($q$ odd) and an arithmetic function $\alpha \colon \FF_q[T] \to \mathbb{C}$, define
\begin{equation*}
S(n,\alpha,\chi) = \sum_{f \in \mathcal{M}_{n,q}} \alpha(f) \chi(f).
\end{equation*}
We have
\begin{equation*}
\Var_{A \in \mathcal{M}_{n,q}} \nu_{b}(A; h) = \frac{\sum_{\substack{\chi \bmod T^{n-h}\\\chi_0 \neq  \chi \text{ even}}} \left|\sum_{m=0}^{n}  S(m,b,\chi) \right|^2 }{q^{2(n-h-1)}}.
\end{equation*}
\end{lem}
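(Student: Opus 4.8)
The plan is to convert the short‑interval variance into a variance over arithmetic progressions modulo $T^{n-h}$ via the reversal involution on $\FF_q[T]$, and then expand in Dirichlet characters. For $f\in\mathcal M_{n,q}$ write $f^{*}(T):=T^{n}f(1/T)$; this has degree $\le n$ and constant term equal to the leading coefficient of $f$, so $f^{*}(0)=1$. The map $f\mapsto f^{*}$ is a bijection from $\mathcal M_{n,q}$ onto $\{g:\deg g\le n,\ g(0)=1\}$, and since the coefficient of $T^{j}$ in $f$ is the coefficient of $T^{n-j}$ in $f^{*}$, the condition $\deg(f-A)\le h$ is equivalent to $f^{*}\equiv A^{*}\pmod{T^{n-h}}$. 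Thus, under reversal, the short interval $I(A;h)$ becomes the residue class of $A^{*}$ modulo $T^{n-h}$.

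The one arithmetic input — and the only place the specific function $b$ is used rather than an arbitrary $\alpha$ — is that $b$ is invariant under reversal: writing $\overline{g}$ for the monic normalization of a nonzero $g$, one has $b(f)=b(\overline{f^{*}})$ for every $f\in\mathcal M_{n,q}$. This is because $b(f)$, the indicator that $f$ is a norm from $\FF_q[\sqrt{-T}]$, depends only on the data $\{(v_{P}(f),\chi_{2}(P)):P\neq T\text{ monic irreducible}\}$ — the valuation at the ramified prime $T$ being irrelevant, a point one checks directly from the definition of $S$ since $f\in S\iff Tf\in S$ — while a short computation gives $\overline{f^{*}}=\prod_{P\neq T}\widetilde P^{\,v_{P}(f)}$, where $\widetilde P:=\overline{P^{*}}$ is again a monic irreducible $\neq T$ with $\deg\widetilde P=\deg P$ and $\chi_{2}(\widetilde P)=\eta(\widetilde P(0))=\eta(P(0)^{-1})=\eta(P(0))=\chi_{2}(P)$ (here $\eta$ is the quadratic character of $\FF_q^{\times}$). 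Hence reversal only permutes primes, preserving everything $b$ sees; moreover $\overline{f^{*}}$ runs over all monic polynomials of degree $\le n$ coprime to $T$, and $[\,\overline{f^{*}}\,]$ lies in $\FF_q^{\times}\cdot[A^{*}]$ inside $(\FF_q[T]/T^{n-h})^{\times}$ exactly when $f\in I(A;h)$.

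With this, I would write $\nu_{b}(A;h)=\sum_{\tilde g}b(\tilde g)$, the sum over monic $\tilde g$ of degree $\le n$ coprime to $T$ with $[\tilde g]\in\FF_q^{\times}[A^{*}]$, and detect the coset condition using the characters of $(\FF_q[T]/T^{n-h})^{\times}$ trivial on $\FF_q^{\times}$ — precisely the even characters mod $T^{n-h}$, of which there are $q^{n-h-1}$. Since $\chi(\tilde g)=0$ when $T\mid\tilde g$, this gives
\begin{equation*}
\nu_{b}(A;h)=\frac{1}{q^{n-h-1}}\sum_{\chi\text{ even}\bmod T^{n-h}}\overline{\chi(A^{*})}\sum_{m=0}^{n}S(m,b,\chi),
\end{equation*}
the finite sum over $m$ arising because reversal lowers degree by $v_{T}(f)$. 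The $\chi=\chi_{0}$ term equals $q^{h+1-n}\sum_{f\in\mathcal M_{n,q}}b(f)=\EE_{B}\nu_{b}(B;h)$ (again using $b(f)=b(\overline{f^{*}})$), so removing it isolates $\nu_{b}(A;h)-\EE_{B}\nu_{b}(B;h)=q^{-(n-h-1)}\sum_{\chi\neq\chi_{0}\text{ even}}\overline{\chi(A^{*})}\sum_{m=0}^{n}S(m,b,\chi)$.

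Finally I would square and average over $A\in\mathcal M_{n,q}$. As $A$ ranges over $\mathcal M_{n,q}$, $A^{*}$ ranges over $\{g:\deg g\le n,\ g(0)=1\}$, hitting each residue class mod $T^{n-h}$ with constant term $1$ exactly $q^{h+1}$ times; since the subgroup $H$ of such classes satisfies $(\FF_q[T]/T^{n-h})^{\times}=\FF_q^{\times}\times H$ with $|H|=q^{n-h-1}$, one gets $\sum_{A}\overline{\chi_{1}(A^{*})}\chi_{2}(A^{*})=q^{n}\mathbf{1}[\chi_{1}=\chi_{2}]$ for even $\chi_{1},\chi_{2}$. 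Substituting this collapses the double sum over characters to its diagonal and produces
\begin{equation*}
\Var_{A\in\mathcal M_{n,q}}\nu_{b}(A;h)=\frac{1}{q^{2(n-h-1)}}\sum_{\substack{\chi\bmod T^{n-h}\\ \chi_{0}\neq\chi\text{ even}}}\Bigl|\sum_{m=0}^{n}S(m,b,\chi)\Bigr|^{2}.
\end{equation*}
The principal obstacle is the reversal‑invariance $b(f)=b(\overline{f^{*}})$ and the bookkeeping it forces at the ramified prime $T$: it is the non‑monic nature of $f^{*}$ (requiring the normalization $\overline{f^{*}}$) that makes only \emph{even} characters appear, and the drop $\deg f^{*}=n-v_{T}(f)$ that makes the inner sum run down to $m=0$.
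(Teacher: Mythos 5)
Your proof is correct and follows essentially the same route as the paper: the paper simply invokes the Keating--Rudnick identity \cite[Lem.~5.4]{keating2016squarefree} and verifies that $b$ satisfies its three hypotheses (invariance under units, under multiplication by powers of $T$, and under the reversal $f\mapsto T^{\deg f}f(1/T)$, the last checked via $\widetilde P(0)=P(0)^{-1}$ exactly as you do), whereas you unpack that citation and rederive the identity directly from the reversal bijection, the passage to cosets of $\FF_q^\times$ modulo $T^{n-h}$, and even-character orthogonality. The one fact you assert rather than prove --- that $b(f)$ depends only on the data $(v_P(f),\chi_2(P))$ for $P\neq T$ --- is precisely the multiplicative characterization of $b$ from \cite[Prop.~2.4]{barysoroker2016on}, which is also what the paper relies on, so there is no gap.
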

\begin{proof}
For any $k \ge 0$, we have $b(T^{2k})=1$ and $b(T^{2k+1})=1$ since $T^{2k} = (T^k)^2 + T \cdot 0^2$ and $T^{2k+1} = 0^2 + T \cdot (T^k)^2$. By a result of Keating and Rudnick \cite[Lem.~5.4]{keating2016squarefree}, we have
\begin{equation*}
\Var_{A \in \mathcal{M}_{n,q}} \nu_{\alpha}(A; h) = \frac{\sum_{\substack{\chi \bmod T^{n-h}\\\chi_0 \neq  \chi \text{ even}}} \left|\sum_{m=0}^{n} \alpha(T^{n-m}) S(m,\alpha,\chi) \right|^2 }{q^{2(n-h-1)}}
\end{equation*}
for any arithmetic function $\alpha\colon\FF_q[T] \to \mathbb{C}$ that satisfies three conditions:
\begin{enumerate}
\item $\alpha(cf)=\alpha(f)$ holds for all $c \in \FF_q^{\times}$ and $f \in \FF_q[T]$,
\item $\alpha(T^k f)=\alpha(T^k)\alpha(f)$ holds for all $f$ coprime to $T$,
\item $\alpha(T^{\deg(f)}f(\frac{1}{T})) = \alpha(f)$ for any $f \in \FF_q[T]$ coprime to $T$.
\end{enumerate}
If we show that $b$ satisfies these properties, we are done, as $b(T^k)=1$ for all $k \ge 0$. The function $b$ satisfies the first property once we extend the definition of $b$ to non-monics as follows:
\begin{equation}
b(c\cdot f) = b(f)
\end{equation}
for all $c \in \FF_q^{\times}$ and $f \in \mathcal{M}_q$. (This means that $b(f)$ is the indicator of polynomials $f$ whose ideal $(f)$ is a norm of an ideal in $\FF_q[\sqrt{-T}]$.) The second property for $\alpha=b$ was established in \cite[Prop. 2.4]{barysoroker2016on}. We finish by verifying the last property. Since $f \mapsto T^{\deg (f)}f(1/T)$ is an involution, it suffices to show that if $b(f)=0$ then $b(T^{\deg(f)}f(1/T))=0$. By \cite[Prop. 2.4]{barysoroker2016on}, $b(f) = 0$ if and only if $P^{2k+1} \mid f$, $P^{2k+2} \nmid f$ for some $k \ge 0$ and a monic irreducible polynomial $P$ such that $P(0) \in (\FF_q)^{\times} \setminus ((\FF_q)^{\times})^2$. If we factor such $f$ (which is coprime to $T$) as
\begin{equation}
f= c \prod_{i=1}^{r} P_i^{e_i}
\end{equation}
where $P_i$ are distinct monic irreducibles and $c \in \FF_q^{\times}$, then the factorization of $T^{\deg(f)}f(1/T)$ is given by
\begin{equation}
T^{\deg f}f(\frac{1}{T}) = f(0) \prod_{i=1}^{r} (T^{\deg (P_i)}P_i(\frac{1}{T})/P_i(0))^{e_i}. 
\end{equation}
In particular, we have $\widetilde{P}^{2k+1} \mid T^{\deg(f)}f(1/T)$, $\widetilde{P}^{2k+2} \nmid T^{\deg(f)}f(1/T)$ for the monic irreducible $\widetilde{P}(T) = T^{\deg(P)}P(1/T)/P(0)$, which satisfies $\widetilde{P}(0) = 1/P(0) \in (\FF_q)^{\times} \setminus ((\FF_q)^{\times})^2$. This concludes the proof.
\end{proof}

\subsection{A computation related to the Dirichlet series of \texorpdfstring{$b$}{b}}\label{telescoping_comp}
In what follows we use the notation 
\begin{equation}
[u^n]\Big(\sum a_k u^k\Big) := a_n
\end{equation}
for formal power series $\sum a_k u^k$. We recall that the $L$-function of a Dirichlet character $\chi\colon \FF_q[T] \to \mathbb{C}$ is the power series
\begin{equation}
L(u,\chi) = \prod_{P}(1-\chi(P)u^{\deg(P)})^{-1},
\end{equation}
where the product runs over all monic irreducible polynomials.
\begin{lem}\label{snbchivia}
Let $\chi \colon \FF_q[T] \to \mathbb{C}$ be a Dirichlet character ($q$ odd). Define
\begin{equation}
S(n,b,\chi) = \sum_{f \in \mathcal{M}_{n,q}} b(f) \chi(f).
\end{equation}
Then
\begin{multline}\label{crazyiden}
S(n,b,\chi) = [u^n] \Big( \sqrt{L(u,\chi)L(u,\chi \cdot \chi_2)}\prod_{i\ge 1} \left( \frac{L(u^{2^i},\chi^{2^i})}{L(u^{2^i},\chi^{2^i} \cdot \chi_2)} \right)^{2^{-i-1}} \\
 \cdot (1-\chi(T)u)^{-1/2}\prod_{i \ge 1} (1-\chi^{2^i}(T)u^{2^i})^{2^{-i-1}} \Big).
\end{multline}
(The roots in the right hand side of \eqref{crazyiden} are chosen so that the constant terms remain 1.)
\end{lem}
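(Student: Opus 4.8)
The plan is to compute the generating series $\mathcal{D}_\chi(u):=\sum_{n\ge 0}S(n,b,\chi)u^n=\sum_{f\in\mathcal{M}_q}b(f)\chi(f)u^{\deg f}$ as an Euler product and then rewrite the contribution of the inert primes in terms of $L$-functions via a telescoping identity. First I would recall from \cite[Prop.~2.4]{barysoroker2016on} that on monic polynomials $b$ is multiplicative, with $b(T^\ell)=1$ for every $\ell\ge 0$, with $b(P^\ell)=1$ for every $\ell\ge 0$ when $\chi_2(P)=1$, and with $b(P^\ell)$ equal to $1$ for $\ell$ even and $0$ for $\ell$ odd when $\chi_2(P)=-1$; here $\chi_2(P)=\pm 1$ precisely when $P\ne T$. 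Since $\chi$ is completely multiplicative, $b\chi$ is multiplicative, and summing the appropriate geometric series at each prime gives
\[
\mathcal{D}_\chi(u)=(1-\chi(T)u)^{-1}\prod_{\chi_2(P)=1}(1-\chi(P)u^{\deg P})^{-1}\prod_{\chi_2(P)=-1}(1-\chi^2(P)u^{2\deg P})^{-1}.
\]
All of what follows takes place in $\mathbb{C}[[u]]$: every factor appearing has constant term $1$, so all fractional and negative powers are the unique ones with constant term $1$, and the infinite products below converge in the $u$-adic topology.

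Next I would peel off an $L$-function. Using $\chi_2(T)=0$, $(\chi\chi_2)(P)=\chi(P)$ on split primes and $(\chi\chi_2)(P)=-\chi(P)$ on inert primes, one has $L(u,\chi)L(u,\chi\chi_2)=(1-\chi(T)u)^{-1}\prod_{\chi_2(P)=1}(1-\chi(P)u^{\deg P})^{-2}\prod_{\chi_2(P)=-1}(1-\chi^2(P)u^{2\deg P})^{-1}$. Comparing this with the Euler product for $\mathcal{D}_\chi$, and writing $E(u):=\prod_{\chi_2(P)=-1}(1-\chi^2(P)u^{2\deg P})^{-1}$ for the inert contribution, one obtains the clean factorization
\[
\mathcal{D}_\chi(u)=\sqrt{L(u,\chi)L(u,\chi\chi_2)}\,(1-\chi(T)u)^{-1/2}\,E(u)^{1/2}.
\]
It remains only to express $E(u)^{1/2}$ through $L$-functions, and this is where the telescoping enters. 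The key is the elementary identity $(1-c)^{-1}=\bigl((1+c)/(1-c)\bigr)^{1/2}(1-c^2)^{-1/2}$ in $\mathbb{C}[[c]]$; applying it with $c=\chi^2(P)u^{2\deg P}$ at every inert $P$ yields $E(u)=A_1(u)^{1/2}E_2(u)^{1/2}$, where $A_1(u)=\prod_{\chi_2(P)=-1}\frac{1+\chi^2(P)u^{2\deg P}}{1-\chi^2(P)u^{2\deg P}}$ and $E_2(u)=\prod_{\chi_2(P)=-1}(1-\chi^4(P)u^{4\deg P})^{-1}$ has exactly the shape of $E(u)$ with $\chi^2,u^2$ replaced by $\chi^4,u^4$. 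Iterating, and using that $E_k(u)^{2^{-k}}\to 1$ $u$-adically, one gets $E(u)^{1/2}=\prod_{i\ge 1}A_i(u)^{2^{-i-1}}$ with $A_i(u)=\prod_{\chi_2(P)=-1}\frac{1+\chi^{2^i}(P)u^{2^i\deg P}}{1-\chi^{2^i}(P)u^{2^i\deg P}}$.

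Finally, the same manipulation used above to extract $L(u,\chi)L(u,\chi\chi_2)$, applied now to $L(u^{2^i},\chi^{2^i})$ and $L(u^{2^i},\chi^{2^i}\chi_2)$, gives $A_i(u)=(1-\chi^{2^i}(T)u^{2^i})\,L(u^{2^i},\chi^{2^i})/L(u^{2^i},\chi^{2^i}\chi_2)$. Substituting this into $E(u)^{1/2}=\prod_{i\ge 1}A_i(u)^{2^{-i-1}}$ and then into the factorization of $\mathcal{D}_\chi(u)$ produces exactly the product on the right of \eqref{crazyiden}, and taking $[u^n]$ finishes the proof. I do not expect a genuine obstacle: the argument is essentially the bookkeeping of an Euler-product identity. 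The only point requiring care is the choice of branches and the convergence of the infinite products, which is why I would set everything up in $\mathbb{C}[[u]]$ from the start; the one non-routine ingredient is the identity $(1-c)^{-1}=\bigl((1+c)/(1-c)\bigr)^{1/2}(1-c^2)^{-1/2}$ together with its iteration, which is immediate once noticed.
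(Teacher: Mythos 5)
Your proposal is correct and is essentially the paper's own argument: both compute the Euler product of $\sum_f b(f)\chi(f)u^{\deg f}$ from the multiplicativity of $b$ and its values at prime powers, extract $\sqrt{L(u,\chi)L(u,\chi\chi_2)}$, and handle the inert primes via the same telescoping identity $(1-z)^{-1/2}=\prod_{i\ge 1}\bigl((1+z^{2^{i-1}})/(1-z^{2^{i-1}})\bigr)^{2^{-i-1}}$, which you obtain by iterating the one-step identity while the paper verifies it directly prime-by-prime. This difference is purely presentational, and your care with formal power series branches matches the paper's convention that all constant terms equal $1$.
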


Recall that $\chi_2$ is the unique non-principal Dirichlet character modulo $T$.

\begin{proof}
The lemma is equivalent to the following identity:
\begin{equation}\label{eulerprodb}
\begin{split}
\sum_{f \in \mathcal{M}_{q}}  b(f) \chi(f) u^{\deg f} &= \sqrt{L(u,\chi)L(u,\chi \cdot \chi_2)}\prod_{i\ge 1} \left( \frac{L(u^{2^i},\chi^{2^i})}{L(u^{2^i},\chi^{2^i} \cdot \chi_2)} \right)^{2^{-i-1}} \\
&\qquad \cdot (1-\chi(T)u)^{-1/2} \prod_{i \ge 1} (1-\chi^{2^i}(T)u^{2^i})^{2^{-i-1}}.
\end{split}
\end{equation}
We verify \eqref{eulerprodb} by comparing  the Euler product of both sides. By \cite[Prop. 2.4]{barysoroker2016on}, the function $b$ is multiplicative (that is, $b(fg)=b(f)b(g)$ for coprime $f,g \in \mathcal{M}_q$), and moreover at prime powers we have
\begin{equation}\label{eq:bonpowers}
b(P^k) = \begin{cases} 1 & \text{if}\;2 \mid k \text{ or } \chi_2(P)\in \{0,1\} \\ 0 & \text{otherwise}\end{cases}.
\end{equation}
Since $b \cdot \chi$ is multiplicative, \eqref{eq:bonpowers} implies that the left hand side of \eqref{eulerprodb} factors as 
\begin{equation}\label{eulerprodeasy}
\prod_{P:\chi_2(P)=1} (1-\chi(P)u^{\deg P})^{-1} \prod_{Q:\chi_2(Q)=-1} (1-\chi(Q^2)u^{2\deg Q})^{-1} (1-\chi(T)u^{\deg T})^{-1},
\end{equation}
where $P,Q$ denote monic irreducible polynomials. We have
\begin{equation}\label{lfunceuler}
\begin{split}
L(u,\chi) &= \prod_{P:\chi_2(P)=1}(1-\chi(P) u^{ \deg P})^{-1} \prod_{Q:\chi_2(Q)=-1}(1-\chi(Q)u^{ \deg Q})^{-1} (1-\chi(T)u)^{-1}, \\
L(u,\chi \cdot \chi_2) &= \prod_{P:\chi_2(P)=1}(1-\chi(P) u^{ \deg P})^{-1} \prod_{Q:\chi_2(Q)=-1}(1+\chi(Q)u^{ \deg Q})^{-1}.
\end{split}
\end{equation}
In particular, \eqref{lfunceuler} implies that
\begin{equation}\label{lfunceuler2}
\begin{split}
\frac{L(u,\chi)}{L(u,\chi \cdot \chi_2)} &= \prod_{Q:\chi_2(Q)=-1} \frac{(1-\chi(Q) u^{ \deg Q})^{-1}}{(1+\chi(Q) u^{ \deg Q})^{-1}} (1-\chi(T)u)^{-1}
\end{split}
\end{equation}
and that
\begin{equation}\label{lfunceuler3}
\begin{split}
\sqrt{ L(u,\chi)L(u,\chi \cdot \chi_2)} &=\prod_{P:\chi_2(P)=1} (1-\chi(P)u^{\deg P})^{-1} \prod_{Q:\chi_2(Q)=-1} (1-\chi^2(Q) u^{ 2\deg Q})^{-1/2} (1-\chi(T)u)^{-1/2}.
\end{split}
\end{equation}
Using \eqref{lfunceuler}--\eqref{lfunceuler3}, we find that the right hand side of \eqref{eulerprodb} factors as
\begin{equation}\label{eulerprodcomp}
\begin{split}
\prod_{P:\chi_2(P)=1}(1-\chi(P) u^{\deg P})^{-1} & \cdot \prod_{Q:\chi_2(Q)=-1}(1-\chi^2(Q) u^{2\deg Q})^{-1/2} \\
& \cdot \quad \prod_{i \ge 1} \prod_{Q:\chi_2(Q)=-1} \left( \frac{(1-\chi^{2^i}(Q) u^{2^i \deg Q})^{-1}}{(1+\chi^{2^i}(Q) u^{ 2^i \deg Q})^{-1}} \right)^{2^{-i-1}}\\
& \cdot (1-\chi(T)u)^{-1/2} \prod_{i\ge 1}(1-\chi^{2^i}(T)u^{2^i})^{-2^{-i-1}}\\
& \cdot (1-\chi(T)u)^{-1/2} \prod_{i \ge 1} (1-\chi^{2^i}(T)u^{2^i})^{2^{-i-1}} \\
&= \prod_{P:\chi_2(P)=1}(1-\chi(P) u^{\deg P})^{-1}  \cdot  \prod_{Q:\chi_2(Q)=-1}(1-\chi^2(Q) u^{2\deg Q})^{-1/2}\\
& \cdot   \prod_{i \ge 1} \prod_{Q:\chi_2(Q)=-1} \left( \frac{(1-\chi^{2^i}(Q) u^{2^i \deg Q})^{-1}}{(1+\chi^{2^i}(Q) u^{ 2^i \deg Q})^{-1}} \right)^{2^{-i-1}} (1-\chi(T)u)^{-1}.
\end{split}
\end{equation}
It remains to establish equality between the Euler products \eqref{eulerprodeasy} and \eqref{eulerprodcomp}. The contribution of the prime $T$ is the same in both, and so is the contribution of primes $P$ satisfying $\chi_2(P)=1$. Now let $Q$ be a prime satisfying $\chi_2(Q)=-1$. It is sufficient to prove that the contribution of this prime in both products is the same, that is
\begin{equation}\label{powseriden}
(1-\chi(Q^2)u^{2\deg Q})^{-1} = (1-\chi^2(Q) u^{2\deg Q})^{-1/2}  \prod_{i \ge 1}  \left( \frac{(1-\chi^{2^i}(Q) u^{2^i \deg Q})^{-1}}{(1+\chi^{2^i}(Q) u^{ 2^i \deg Q})^{-1}} \right)^{2^{-i-1}}.
\end{equation}
Letting $z=\chi^2(Q)u^{2\deg Q}$, the identity \eqref{powseriden} becomes
\begin{equation}\label{powseriden2}
(1-z)^{-1/2} = \prod_{i \ge 1} \left( \frac{1+z^{2^{i-1}}}{1-z^{2^{i-1}}} \right)^{2^{-i-1}},
\end{equation}
which follows by noting the telescoping nature of the right hand side of \eqref{powseriden2}:
\begin{equation*}
\prod_{i \ge 1} \left( \frac{1+z^{2^{i-1}}}{1-z^{2^{i-1}}} \right)^{2^{-i-1}} =\prod_{i \ge 1} \frac{(1-z^{2^i})^{2^{-i-1}}}{(1-z^{2^{i-1}})^{2^{-i}}} = (1-z)^{-1/2}.
\end{equation*} 
\end{proof}
We recall some basic facts from \cite[Sec. 6]{keating2016squarefree}. For a non-principal Dirichlet character $\chi$ modulo $Q$, the $L$-function $L(u,\chi)$ is a polynomial of degree at most $\deg(Q)-1$. The Riemann Hypothesis for $L(u,\chi)$ says that we may factor $L(u,\chi)$ as
\begin{equation}\label{eq:lfact}
L(u,\chi) = \prod_{i=1}^{\deg L(u,\chi)} (1-\gamma_i u), \qquad |\gamma_i| \le \sqrt{q}.
\end{equation}
In fact for non-trivial zeros $|\gamma_i| = q^{1/2}$, but $L(u,\chi)$ may have trivial zeros, in which case $|\gamma_i| = 1$.
\begin{lem}\label{lem:snbform}
Let $\chi_0 \neq \chi \colon \FF_q[T] \to \mathbb{C}$ be an even Dirichlet character modulo $T^{n+1}$ ($q$ odd). Then
\begin{equation}\label{snbchiviause}
S(n,b,\chi) = [u^n]\sqrt{L(u,\chi) L(u,\chi \cdot \chi_2)} + O_n(q^{\frac{n}{2} - \frac{1}{4}})
\end{equation}
and
\begin{equation}\label{snbchiviauseweak}
S(n,b,\chi) = O_n(q^{\frac{n}{2}}).
\end{equation}
\end{lem}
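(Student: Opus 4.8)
The starting point is the exact identity of Lemma~\ref{snbchivia}, and the plan is to show that, once $\chi$ is an even non-trivial character modulo $T^{n+1}$, every factor on the right-hand side of \eqref{crazyiden} except $\sqrt{L(u,\chi)L(u,\chi\cdot\chi_2)}$ contributes only to the error term. First I would dispose of the second line of \eqref{crazyiden}: since $\chi$ has modulus $T^{n+1}$ and $\gcd(T,T^{n+1})\neq 1$, one has $\chi(T)=0$, hence also $\chi^{2^i}(T)=0$ for all $i\ge 1$, so $(1-\chi(T)u)^{-1/2}\prod_{i\ge 1}(1-\chi^{2^i}(T)u^{2^i})^{2^{-i-1}}=1$, and Lemma~\ref{snbchivia} collapses to $S(n,b,\chi)=[u^n]\big(P(u)E(u)\big)$, where $P(u):=\sqrt{L(u,\chi)L(u,\chi\cdot\chi_2)}$ and $E(u):=\prod_{i\ge 1}\big(L(u^{2^i},\chi^{2^i})/L(u^{2^i},\chi^{2^i}\cdot\chi_2)\big)^{2^{-i-1}}$ (an infinite product which makes sense $u$-adically, as its $i$-th factor is $1+O(u^{2^i})$).

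Next I would isolate the crucial structural point: \emph{every} character occurring here is non-trivial. An even character modulo $T^{n+1}$ is trivial on $\FF_q^\times$, hence factors through $(\FF_q[T]/(T^{n+1}))^\times/\FF_q^\times\cong 1+T\FF_q[T]/(T^{n+1})$, a group of odd order $q^n$ (recall $q$ is odd); thus $\chi$ has odd order, and $\chi^{2^i}$ is non-trivial for every $i\ge 0$. Meanwhile $\chi\cdot\chi_2$ and each $\chi^{2^i}\cdot\chi_2$ restrict to the nontrivial quadratic character of $\FF_q^\times$, so they too are non-trivial. Consequently each of $L(u,\chi)$, $L(u,\chi\cdot\chi_2)$, $L(v,\chi^{2^i})$, $L(v,\chi^{2^i}\cdot\chi_2)$ is a genuine polynomial of degree at most $n$ whose reciprocal zeros all have absolute value at most $\sqrt q$, by \eqref{eq:lfact}.

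The rest is power-series estimation. Writing $P(u)=\sum_{m\ge 0}a_m u^m$ and $E(u)=\sum_{k\ge 0}c_k u^k$ (so $c_0=1$), we have $S(n,b,\chi)=a_n+\sum_{k\ge 1}a_{n-k}c_k$, in which $a_n=[u^n]\sqrt{L(u,\chi)L(u,\chi\cdot\chi_2)}$ is exactly the claimed main term; the task is to bound the tail. Factoring $L(u,\chi)L(u,\chi\cdot\chi_2)=\prod_j(1-\gamma_j u)$ over its $\le 2n$ reciprocal zeros and using $|\binom{1/2}{\ell}|\le 1$ gives $|a_m|\ll_n q^{m/2}$. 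For $E(u)$ I would pass to $\log E(u)=\sum_{i\ge 1}2^{-i-1}\log\big(L(u^{2^i},\chi^{2^i})/L(u^{2^i},\chi^{2^i}\cdot\chi_2)\big)$; the coefficient of $v^j$ in each $\log\big(L(v,\psi_1)/L(v,\psi_2)\big)$ is a power-sum expression in at most $2n$ reciprocal zeros of modulus $\le\sqrt q$, hence $\ll_n q^{j/2}$, so after substituting $v=u^{2^i}$ and summing over $i$ one finds $[u^m]\log E(u)\ll_n q^{m/4}$, with $[u^m]\log E(u)=0$ unless $m$ is even and $\ge 2$; exponentiating yields $c_1=0$ and $|c_k|\ll_n q^{k/4}$ for $k\ge 1$. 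Hence
\[
\Big|\sum_{k\ge 1}a_{n-k}c_k\Big|=\Big|\sum_{k\ge 2}a_{n-k}c_k\Big|\ll_n\sum_{k=2}^{n}q^{(n-k)/2}q^{k/4}\ll_n q^{n/2-1/2},
\]
which is $O_n(q^{n/2-1/4})$ and gives \eqref{snbchiviause}; then \eqref{snbchiviauseweak} is immediate since $|a_n|\ll_n q^{n/2}$.

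I expect the only genuine obstacle to be the observation in the second paragraph. Without knowing that evenness forces $\chi$ — and therefore $\chi^{2^i}$ and $\chi^{2^i}\cdot\chi_2$ — to be non-trivial, the correction factor $E(u)$ could a priori involve $L$-functions of trivial characters, whose power-series coefficients grow like $q^m$ rather than $q^{m/2}$ and would swamp the main term; once non-triviality is in place, the Riemann Hypothesis bound \eqref{eq:lfact} is precisely the input needed, and everything else is routine bookkeeping with binomial series and the exponential map. (The argument as sketched in fact yields the slightly stronger error term $O_n(q^{n/2-1/2})$.)
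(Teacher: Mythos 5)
Your proof is correct and follows essentially the same route as the paper's: start from the identity of Lemma \ref{snbchivia}, note that evenness forces $\chi$ to have odd order so that every $\chi^{2^i}$ (and $\chi^{2^i}\cdot\chi_2$) is non-trivial, apply the Riemann Hypothesis bound \eqref{eq:lfact} to get coefficient bounds $O_n(q^{j/4})$ for the correction factor and $O_n(q^{j/2})$ for $\sqrt{L(u,\chi)L(u,\chi\cdot\chi_2)}$, and multiply out. The only deviations are cosmetic refinements (using $\chi(T)=0$ and the vanishing of odd-degree coefficients of the correction factor, which is why you obtain the slightly sharper error $O_n(q^{n/2-1/2})$ that the paper does not bother to record).
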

\begin{proof}
By Lemma \ref{snbchivia},
\begin{equation}\label{infprodstam}
\begin{split}
S(n,b,\chi) &= [u^n] \sqrt{L(u,\chi)L(u,\chi \cdot \chi_2)}\prod_{i\ge 1} \left( \frac{L(u^{2^i},\chi^{2^i})}{L(u^{2^i},\chi^{2^i} \cdot \chi_2)} \right)^{2^{-i-1}} (1-\chi(T)u)^{-1/2}\prod_{i \ge 1} (1-\chi^{2^i}(T)u^{2^i})^{2^{-i-1}}.
\end{split}
\end{equation}
Although the products in \eqref{infprodstam} are infinite, we may truncate them because only the coefficient of $u^n$ is of interest to us:
\begin{equation}\label{finiteprodstam}
\begin{split}
S(n,b,\chi) &=  [u^n] \sqrt{L(u,\chi)L(u,\chi \cdot \chi_2)}\prod_{i=1}^{n} \left( \frac{L(u^{2^i},\chi^{2^i})}{L(u^{2^i},\chi^{2^i} \cdot \chi_2)} \right)^{2^{-i-1}} (1-\chi(T)u)^{-1/2}\prod_{i=1}^{n} (1-\chi^{2^i}(T)u^{2^i})^{2^{-i-1}}.
\end{split}
\end{equation}
For any $i \ge 1$, the character $\chi^{2^i}$ is non-principal, since the order of $\chi$ (as a character of $(\FF_q[T]/T^{n+1})^{\times}$) is a power of $q$, and in particular it is odd. Hence, by making use of \eqref{eq:lfact} with $\chi^{2^i}$ and $\chi^{2^i} \cdot \chi_2$, we see that the $j$-th coefficients of $L(u,\chi^{2^i})$ and of $L(u,\chi^{2^i}\cdot \chi_2)$ are both of size $O_{j,n}(q^{j/2})$. In particular, for any $i \ge 1$,
\begin{equation}\label{2jest}
[u^j] L(u^{2^i},\chi^{2^i}), [u^j] L(u^{2^i},\chi^{2^i}\cdot \chi_2) = O_{j,n}(q^{j/2^{i+1}}) = O_{j,n}(q^{j/4}).
\end{equation}
From \eqref{2jest} we deduce that
\begin{equation}\label{infprodest}
[u^j] \prod_{i=1}^{n} \left( \frac{L(u^{2^i},\chi^{2^i})}{L(u^{2^i},\chi^{2^i} \cdot \chi_2)} \right)^{2^{-i-1}} (1-\chi(T)u)^{-1/2}\prod_{i=1}^{n} (1-\chi^{2^i}(T)u^{2^i})^{2^{-i-1}} = O_{j,n}(q^{j/4}).
\end{equation}
Additionally, from \eqref{eq:lfact} with $\chi$ and $\chi \cdot \chi_2$, we have $[u^j]L(u,\chi), [u^j]L(u,\chi \cdot \chi_2) =  O_{j,n}(q^{j/2})$, and so 
\begin{equation}\label{simpprodest}
[u^j]\sqrt{L(u,\chi) L(u,\chi \cdot \chi_2)} = O_{j,n}(q^{j/2}).
\end{equation}
Plugging the estimates \eqref{infprodest} and \eqref{simpprodest} in \eqref{finiteprodstam}, we establish \eqref{snbchiviause}. From \eqref{snbchiviause} and \eqref{simpprodest} with $j=n$, we obtain \eqref{snbchiviauseweak}. 
\end{proof}

\subsection{The passage to zeros of \texorpdfstring{$L$}{L}-functions}\label{sec:short_to_zeros}
We recall some facts from \cite[Sec. 6]{keating2016squarefree}. A Dirichlet character $\chi$ modulo $Q$ is primitive if there is no proper divisor $Q_0 \mid Q$ such that $\chi(F)=1$ whenever $F$ is coprime to $Q$ and $F \equiv 1 \bmod Q_0$. If $\chi$ is a primitive character modulo $Q$, then by the Riemann Hypothesis for $L(u,\chi)$ we have
\begin{equation}
L(u,\chi)=(1-u)^{a(\chi)} \prod_{i=1}^{\deg(Q)-1-a(\chi)}(1-\gamma_i(\chi)u),
\end{equation}
where $a(\chi)=1$ if $\chi$ is even and $a(\chi)=0$ otherwise, and $|\gamma_i(\chi)| = \sqrt{q}$. The unitarized Frobenius matrix of $\chi$ is the diagonal unitary matrix
\begin{equation}
\Theta_{\chi} := \mathrm{Diag}(\frac{\gamma_1(\chi)}{\sqrt{q}}, \ldots, \frac{\gamma_{\deg(Q)-1-a(\chi)}}{\sqrt{q}} ).
\end{equation}
\begin{proposition}\label{thmb}
Let $0 \le h \le n-1$. We have
\begin{multline}\label{eq:shortvariance_to_zeros}
\frac{\Var_{A \in \mathcal{M}_{n,q}} \big(\nu_{b}(A; h)\big)}{q^{h+1}} \\ = \frac{1}{q^{n-h-1}}\sum_{\substack{\chi \bmod T^{n-h} \\ \text{ even and primitive}}}\left| [u^n] \sqrt{\det(I-u\Theta_{\chi}) \cdot \det(I-u\Theta_{\chi \cdot \chi_2})} \right|^2  \\ +O_n\left( q^{-\frac{1}{4}} \right).
\end{multline}
\end{proposition}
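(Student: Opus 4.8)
The plan is to substitute the evaluation of $S(n,b,\chi)$ from Lemma~\ref{lem:snbform} into the character-sum identity of Lemma~\ref{lemshortvar} and then rewrite the $L$-functions in terms of the unitarized Frobenii $\Theta_\chi$; the substantive analytic content is already packaged into those two lemmas, so what remains is largely bookkeeping. Throughout we may assume $0\le h\le n-2$, since if $h=n-1$ then $I(A;h)=\mathcal{M}_{n,q}$, the variance is $0$, and the right-hand side of \eqref{eq:shortvariance_to_zeros} is an empty sum plus $O_n(q^{-1/4})$. \emph{Step 1 (collapse the sum over $m$).} For an even nontrivial character $\chi$ modulo $T^{n-h}$ (note $n-h\ge 2$), regarding $\chi$ also as a character modulo $T^{n+1}$, \eqref{snbchiviause} gives $S(n,b,\chi)=[u^n]\sqrt{L(u,\chi)L(u,\chi\chi_2)}+O_n(q^{n/2-1/4})$; the value of $L(u,\chi)$ is unaffected by this change of modulus because the only prime dividing $T^{n-h}$ or $T^{n+1}$ is $T$, at which $\chi$ vanishes. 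For $0\le m\le n-1$, Lemma~\ref{snbchivia} together with the coefficient estimates in the proof of Lemma~\ref{lem:snbform} — which use only the Riemann Hypothesis and the nontriviality of $\chi^{2^i}$ for $i\ge 1$ (valid since an even character modulo a power of $T$ has order a power of the odd prime $p$) — give $S(m,b,\chi)=O_n(q^{m/2})$. Summing, $\sum_{m=0}^n S(m,b,\chi)=[u^n]\sqrt{L(u,\chi)L(u,\chi\chi_2)}+O_n(q^{n/2-1/4})$, a quantity that is itself $O_n(q^{n/2})$ by \eqref{simpprodest}.

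\emph{Step 2 (pass to $\Theta_\chi$).} Restrict first to even \emph{primitive} $\chi$ modulo $T^{n-h}$. Then $a(\chi)=1$, while $\chi\chi_2$ is odd (as $\chi_2$ is nontrivial on $\FF_q^\times$) and is primitive modulo $T^{n-h}$ with $a(\chi\chi_2)=0$, since $\chi$ has conductor $T^{n-h}$ with $n-h\ge 2$ whereas $\chi_2$ has conductor $T$. Consequently
\[
L(u,\chi)L(u,\chi\chi_2)=(1-u)\,\det\!\big(I-u\sqrt{q}\,\Theta_\chi\big)\det\!\big(I-u\sqrt{q}\,\Theta_{\chi\chi_2}\big).
\]
Write $\sqrt{\det(I-u\Theta_\chi)\det(I-u\Theta_{\chi\chi_2})}=\sum_k r_k u^k$; since the polynomial under the root has degree at most $2n$ with all roots on the unit circle, $r_k=O_n(1)$, and in particular $[u^n]\sqrt{\det(I-u\Theta_\chi)\det(I-u\Theta_{\chi\chi_2})}=O_n(1)$. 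Then $\sqrt{L(u,\chi)L(u,\chi\chi_2)}=(1-u)^{1/2}\sum_k r_k q^{k/2}u^k$, whence
\[
[u^n]\sqrt{L(u,\chi)L(u,\chi\chi_2)}=q^{n/2}\,[u^n]\sqrt{\det(I-u\Theta_\chi)\det(I-u\Theta_{\chi\chi_2})}+O_n(q^{(n-1)/2}).
\]
Combining with Step~1 and squaring,
\[
\Big|\sum_{m=0}^n S(m,b,\chi)\Big|^2=q^n\Big|[u^n]\sqrt{\det(I-u\Theta_\chi)\det(I-u\Theta_{\chi\chi_2})}\Big|^2+O_n(q^{n-1/4}).
\]

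\emph{Step 3 (imprimitive characters and summation).} If $\chi$ is even, nontrivial and imprimitive modulo $T^{n-h}$, with conductor $T^c$ ($2\le c\le n-h-1$), then $L(u,\chi)$ and $L(u,\chi\chi_2)$ coincide with the $L$-functions of the primitive characters inducing $\chi$ and $\chi\chi_2$, so $[u^n]\sqrt{L(u,\chi)L(u,\chi\chi_2)}=O_n(q^{n/2})$ and hence $|\sum_m S(m,b,\chi)|^2=O_n(q^n)$; as there are $O(q^{n-h-2})$ such $\chi$, they contribute $O_n(q^{2n-h-2})$ to the sum in Lemma~\ref{lemshortvar}. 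Feeding Steps~1--2 and this bound into Lemma~\ref{lemshortvar} (there being at most $q^{n-h-1}$ even nontrivial characters modulo $T^{n-h}$) yields
\[
\Var_{A\in\mathcal{M}_{n,q}}\!\big(\nu_b(A;h)\big)=\frac{q^n}{q^{2(n-h-1)}}\sum_{\substack{\chi\bmod T^{n-h}\\\text{even and primitive}}}\Big|[u^n]\sqrt{\det(I-u\Theta_\chi)\det(I-u\Theta_{\chi\chi_2})}\Big|^2+O_n(q^{h+1-1/4}),
\]
where the error absorbs $q^{-2(n-h-1)}\cdot q^{n-h-1}\cdot O_n(q^{n-1/4})$ together with $q^{-2(n-h-1)}\cdot O_n(q^{2n-h-2})$. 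Dividing by $q^{h+1}$ and using $q^n\cdot q^{-2(n-h-1)}\cdot q^{-(h+1)}=q^{-(n-h-1)}$ gives \eqref{eq:shortvariance_to_zeros}.

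I expect the main obstacle to be Step~2: one must keep the normalizations exactly right — in particular accounting for the trivial zero at $u=1$ that $L(u,\chi)$ carries when $\chi$ is even, which is not present in $\det(I-u\Theta_\chi)$ — and one needs the coefficients $r_k$ of the relevant square root to be bounded uniformly in $q$; only then is the discrepancy between $[u^n]\sqrt{L(u,\chi)L(u,\chi\chi_2)}$ and $q^{n/2}[u^n]\sqrt{\det(I-u\Theta_\chi)\det(I-u\Theta_{\chi\chi_2})}$ of genuinely lower order. Everything else is careful accounting of powers of $q$.
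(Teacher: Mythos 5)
Your proof is correct and follows essentially the same route as the paper's: combine Lemma \ref{lemshortvar} with Lemma \ref{lem:snbform}, discard the $O(q^{n-h-2})$ imprimitive even characters, and replace $L(u,\chi)$, $L(u,\chi\chi_2)$ by $(1-u)\det(I-u\sqrt{q}\,\Theta_\chi)$ and $\det(I-u\sqrt{q}\,\Theta_{\chi\chi_2})$, with the same error bookkeeping. Your additional checks (the trivial $h=n-1$ case, lifting the modulus to $T^{n+1}$, and the primitivity/oddness of $\chi\chi_2$) are details the paper passes over implicitly, and you handle them correctly.
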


\begin{proof}
By Lemma \ref{lem:snbform},
\begin{equation}\label{eq:summ}
\sum_{m=0}^{n}  S(m,b,\chi) = [u^n]\sqrt{L(u,\chi) L(u,\chi \cdot \chi_2)} + O_n(q^{\frac{n}{2} - \frac{1}{4}}) = O_n(q^{\frac{n}{2}}).
\end{equation}
From \eqref{eq:summ} and 
Lemma \ref{lemshortvar} we obtain
\begin{equation}\label{varland}
\Var_{A \in \mathcal{M}_{n,q}} \nu_{b}(A; h) = \frac{\sum_{\substack{\chi \bmod T^{n-h}\\\chi_0 \neq  \chi \text{ even}}} \left|[u^n]\sqrt{L(u,\chi) L(u,\chi \cdot \chi_2)} + O_n(q^{\frac{n}{2} - \frac{1}{4}}) \right|^2 }{q^{2(n-h-1)}}.
\end{equation}
Since the number of non-primitive even characters modulo $T^{n-h-1}$ is $O(q^{n-h-2})$ while the number of primitive even characters modulo $T^{n-h-1}$ is $q^{n-h-1}+O(q^{n-h-2})$ \cite[Sec. 6]{keating2016squarefree}, \eqref{eq:summ} and \eqref{varland} imply that
\begin{equation}\label{varland2}
\begin{split}
\Var_{A \in \mathcal{M}_{n,q}} \nu_{b}(A; h) &= \frac{\sum_{\substack{\chi \bmod T^{n-h} \\ \text{ even and primitive}}}\left|[u^n]\sqrt{L(u,\chi) L(u,\chi \cdot \chi_2)} + O_n(q^{\frac{n}{2} - \frac{1}{4}}) \right|^2 }{q^{2(n-h-1)}} + O_n(q^h)\\
& = \frac{\sum_{\substack{\chi \bmod T^{n-h} \\ \text{ even and primitive}}}\left|[u^n]\sqrt{L(u,\chi) L(u,\chi \cdot \chi_2)} \right|^2 }{q^{2(n-h-1)}} + O_n(q^{h-\frac{1}{4}}).
\end{split}
\end{equation}
We now write $L(u,\chi)$ as $\det(I-u\sqrt{q}\Theta_{\chi})(1-u)$ and $L(u,\chi\cdot \chi_2)$ as $\det(I-u\sqrt{q}\Theta_{\chi \cdot \chi_2})$ to obtain 
\begin{equation}\label{varland3}
\Var_{A \in \mathcal{M}_{n,q}} \nu_{b}(A; h) = \frac{\sum_{\substack{\chi \bmod T^{n-h} \\ \text{ even and primitive}}}\left|[u^n]\sqrt{\det(I-u\sqrt{q} \Theta_{\chi})\det(I-u\sqrt{q} \Theta_{\chi})(1-u)} \right|^2 }{q^{2(n-h-1)}} + O_n(q^{h-\frac{1}{4}}).
\end{equation}
The proof is concluded by writing in \eqref{varland3} $[u^n]\sqrt{\det(I-u\sqrt{q} \Theta_{\chi})\det(I-u\sqrt{q} \Theta_{\chi})(1-u)} = q^{n/2} [u^n] \sqrt{\det(I-u\Theta_{\chi}) \det(I-u\Theta_{\chi \cdot \chi_2})} + O_n(q^{(n-1)/2})$ and dividing both sides by $q^{h+1}$.
\end{proof}

\section{Equidistribution and random matrix integrals}\label{sec:equidistr}

We turn to an evaluation of the average in the formula \eqref{eq:shortvariance_to_zeros}. We are able to evaluate these averages by making use of a recent equidistribution theorem of Sawin \cite{sawin2018equidistribution}. We adopt the following notation for a continuous class function $f\colon \prod_{i=1}^r U(N_i) \rightarrow \mathbb{C}$, where $U(n)$ is the $n\times n$ unitary group; we define the function $\langle f \rangle$ as the unique continuous function mapping $U(1)^r \rightarrow \mathbb{C}$ such that
\begin{multline*}
\int_{\prod U(N_i)} f(g_1,...,g_r) \psi(\det g_1, ..., \det g_r)\, dg_1\cdots dg_r \\
 = \int_{\prod U(N_i)} \langle f \rangle(\det g_1, ..., \det g_r) \psi(\det g_1, ..., \det g_r)\, dg_1\cdots dg_r,
\end{multline*}
for all continuous functions $\psi\colon U(1)^r \rightarrow \mathbb{C}$. That is, $\langle f \rangle (c_1,...,c_r)$ is the integral of $f$ over the coset of $\prod SU(N_i) \le \prod U(N_i)$ consisting of elements with determinants $c_1,...,c_r$, against the unique $\prod SU(N_i)$-invariant measure on that coset, of total mass 1.

As a special case of the aforementioned result of Sawin \cite[Theorem 1.2]{sawin2018equidistribution}, one has that
\begin{thm}
\label{UtimesU_equi}
If $f\colon U(N-1)\times U(N)$ is a continuous class function and $N \geq 6$, then
$$
\lim_{q\rightarrow\infty} \bigg[\frac{1}{q^N} \sum_{\substack{\chi \;(T^{N+1}) \\ \mathrm{ev., prim.}}} f(\Theta_\chi, \Theta_{\chi\cdot \chi_2}) - \frac{1}{q^N}  \sum_{\substack{\chi \;(T^{N+1}) \\ \mathrm{ev., prim.}}} \langle f\rangle (\det \Theta_\chi, \det \Theta_{\chi\cdot \chi_2}) \bigg] = 0
$$
where the limit is taken for $q$ of fixed characteristic.
\end{thm}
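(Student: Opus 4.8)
The plan is to obtain Theorem~\ref{UtimesU_equi} as a direct consequence of Sawin's equidistribution theorem \cite[Thm.~1.2]{sawin2018equidistribution} (which refines the single-family equidistribution results of Katz); the content of our argument is to identify the relevant family of pairs of Frobenius classes with an instance of Sawin's setup and to verify the hypotheses of that theorem, after which the conclusion is immediate.

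I would begin by making the parametrization precise. The even primitive Dirichlet characters $\chi$ modulo $T^{N+1}$ form a family of cardinality $q^N + O(q^{N-1})$, and for such a $\chi$ one has $a(\chi)=1$, so the unitarized Frobenius matrix $\Theta_\chi$ has size $\deg(T^{N+1})-1-a(\chi)=N-1$, i.e.\ $\Theta_\chi\in U(N-1)$. The twist $\chi\cdot\chi_2$ is again a character modulo $T^{N+1}$, and it is in fact primitive modulo $T^{N+1}$: this follows from $\mathrm{cond}(\chi\chi_2)\mid T^{N+1}$ together with $\mathrm{cond}(\chi)=\mathrm{cond}\big((\chi\chi_2)\cdot\chi_2\big)\mid\mathrm{lcm}\big(\mathrm{cond}(\chi\chi_2),T\big)$ and $N+1\ge 2$. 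Its restriction to $\FF_q^\times$ equals $\chi_2$, which is nontrivial, so $\chi\cdot\chi_2$ is not even, $a(\chi\cdot\chi_2)=0$, and $\Theta_{\chi\cdot\chi_2}$ has size $\deg(T^{N+1})-1-0=N$, i.e.\ $\Theta_{\chi\cdot\chi_2}\in U(N)$. Hence the pair $(\Theta_\chi,\Theta_{\chi\cdot\chi_2})$ is exactly the tuple of Frobenius data obtained by twisting the varying short-interval character $\chi$ by the two fixed characters $\psi_1=\chi_0$ and $\psi_2=\chi_2$, which is the shape of family to which \cite[Thm.~1.2]{sawin2018equidistribution} applies.

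Next I would invoke Sawin's theorem. Its content, in this generality, is that as $q\to\infty$ through powers of a fixed prime the tuples of Frobenius conjugacy classes produced by twisting a varying primitive character modulo $T^{N+1}$ by a fixed finite list of characters equidistribute in the corresponding product of unitary groups as uniformly as the relations forced among the determinants allow; equivalently, the geometric monodromy group of the associated sheaf contains the product $SU(N-1)\times SU(N)$ of derived groups, and the standard equidistribution mechanism then yields that the average of a continuous class function $f$ over the Frobenius tuples agrees, in the $q\to\infty$ limit, with the average of its determinant-conditional expectation $\langle f\rangle$, which is precisely the displayed identity in Theorem~\ref{UtimesU_equi}. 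The hypotheses to be checked are that the twisting characters $\psi_1=\chi_0$ and $\psi_2=\chi_2$ are distinct, i.e.\ that $\psi_1\psi_2^{-1}=\chi_2$ is a fixed nontrivial character (which prevents any spurious coincidence between $\Theta_\chi$ and $\Theta_{\chi\cdot\chi_2}$), that their conductors divide $T$ and hence are negligible against the modulus $T^{N+1}$, and that one is in the stable range, which is the source of the hypothesis $N\ge 6$.

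The one genuinely deep ingredient is the big-monodromy statement underlying Sawin's theorem, namely that twisting a short-interval character by a fixed nontrivial quadratic character creates no correlation between $\Theta_\chi$ and $\Theta_{\chi\cdot\chi_2}$ beyond that forced by their determinants; this is exactly what \cite[Thm.~1.2]{sawin2018equidistribution} supplies, so from our vantage point the proof reduces to the identification and hypothesis-checking above. I would close by recording the two features inherited from that theorem: the limit must be taken with $q$ running over powers of a single fixed prime, since the sheaf and its monodromy are defined over that prime field, and, because $f$ and $\langle f\rangle$ are continuous functions on compact groups and hence bounded, the difference of the two averages genuinely tends to $0$, with no extra uniformity in $f$ needed.
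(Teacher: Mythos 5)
Your proposal is correct and follows essentially the same route as the paper, which simply states Theorem \ref{UtimesU_equi} as a special case of Sawin's Theorem 1.2 and records in a remark the same bookkeeping you carry out (that even primitive $\chi$ mod $T^{N+1}$ gives $\Theta_\chi\in U(N-1)$ while the odd primitive twist $\chi\cdot\chi_2$ gives $\Theta_{\chi\cdot\chi_2}\in U(N)$, with $N\geq 6$ and fixed characteristic inherited from Sawin's hypotheses). Your additional verifications, such as the primitivity of $\chi\cdot\chi_2$ via the conductor argument, are correct and only make explicit what the paper leaves implicit.
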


\begin{remark}
Note that $(\Theta_\chi,\Theta_{\chi\cdot \chi_2}) \in U(N-1) \times U(N)$ even though both $L(u,\chi)$ and $L(u,\chi\cdot \chi_2)$ have $N$ zeros. The reason is that for even $\chi$, $L(u,\chi)$ has only $N-1$ non-trivial zeros; on the other hand $\chi \cdot \chi_2$ is no longer even and $L(u,\chi\cdot \chi_2)$ has $N$ non-trivial zeros.
\end{remark}

We introduce the notation, for a unitary matrix $g$,
\begin{equation}
\label{A_kz}
A_{k,(z)}(g) := [u^k] \det(1-ug)^z.
\end{equation}
Note that $A_{k,(z)}(g)$ is a symmetric homogeneous polynomial of degree $k$ in the eigenvalues of $g$. Because we will make use especially of the case $z=1/2$, we introduce the abbreviation $A_k(g):= A_{k,(1/2)}(g)$.

Theorem \ref{UtimesU_equi} allows us to deduce the following.
\begin{cor}\label{sqrt_char_to_matrix}
Fix constants $- 1\leq h \le n-1$ and let $N = n-h-1$. For $n \leq N(N-1)$ and $N \geq 6$,
\begin{multline}\label{eq:sqrt_char_to_matrix}
\lim_{q\rightarrow\infty} \frac{1}{q^{n-h-1}} \sum_{\substack{\chi\; (T^{n-h}) \\ \mathrm{ev., prime.}}}\Big| [u^n]\sqrt{\det(1- u \Theta_\chi) \det(1-u \Theta_{\chi\cdot \chi_2})}\Big|^2 
\\ = \sum_{\substack{j+k=n \\ j,k \geq 0}} \int_{U(N-1)} |A_j(g_1)|^2\, dg_1 \int_{U(N)} |A_k(g_2)|^2\, dg_2,
\end{multline}
with the limit is taken along a sequence of $q$ of fixed characteristic.
\end{cor}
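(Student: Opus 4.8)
The plan is to combine the equidistribution statement of Theorem~\ref{UtimesU_equi} with an explicit evaluation of the averaged integrand $\langle f\rangle$. Set
\[
f(g_1,g_2) := \Big|[u^n]\sqrt{\det(1-ug_1)\det(1-ug_2)}\Big|^2,
\]
a continuous class function on $U(N-1)\times U(N)$. First I would rewrite the square root factor-wise: since $\sqrt{\det(1-ug_1)\det(1-ug_2)} = \det(1-ug_1)^{1/2}\det(1-ug_2)^{1/2}$ (with the branches normalized so the constant term is $1$, exactly as in the statement of Lemma~\ref{snbchivia}), extracting the coefficient of $u^n$ and using the convolution of power series gives
\[
[u^n]\sqrt{\det(1-ug_1)\det(1-ug_2)} = \sum_{j+k=n,\ j,k\ge 0} A_j(g_1)\,A_k(g_2),
\]
where $A_j(g)=A_{j,(1/2)}(g)=[u^j]\det(1-ug)^{1/2}$ as defined in the text. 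Thus $f(g_1,g_2)=\big|\sum_{j+k=n}A_j(g_1)A_k(g_2)\big|^2$, expanding to $\sum_{j+k=n}\sum_{j'+k'=n}A_j(g_1)\overline{A_{j'}(g_1)}\,A_k(g_2)\overline{A_{k'}(g_2)}$.

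Next I would pass through Theorem~\ref{UtimesU_equi}: the normalized sum over even primitive $\chi \bmod T^{n-h}=T^{N+1}$ of $f(\Theta_\chi,\Theta_{\chi\cdot\chi_2})$ has the same $q\to\infty$ limit (along fixed characteristic) as the normalized sum of $\langle f\rangle(\det\Theta_\chi,\det\Theta_{\chi\cdot\chi_2})$, provided $N\ge 6$ — and this is where the hypothesis $n\le N(N-1)$ must do its work: it guarantees that each monomial $A_j(g_1)\overline{A_{j'}(g_1)}A_k(g_2)\overline{A_{k'}(g_2)}$ appearing in $f$, viewed as a class function on $U(N-1)\times U(N)$, lies within the range where Sawin's theorem (as packaged in Theorem~\ref{UtimesU_equi}) applies; in effect $A_j$ for $j\le n < N(N-1)$ is a polynomial in the power sums $p_1,\dots,p_j$ of bounded complexity relative to the matrix size, so the "short support" condition is met. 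I would then compute $\langle f\rangle$. The operation $\langle\cdot\rangle$ replaces a class function by its average over the $SU$-coset fixing the determinants $c_1=\det g_1$, $c_2=\det g_2$; by the Rains/Diaconis–Shahshahani-type orthogonality of the functions $g\mapsto A_j(g)\overline{A_{j'}(g)}$ restricted to such cosets — concretely, the cross terms $j\ne j'$ (and $k\ne k'$) integrate to $0$ over the $SU$-coset because they carry a non-trivial power of $\det g$ against the character the coset measure picks out, while the diagonal terms $j=j'$ contribute the coset integral of $|A_j(g_1)|^2$, which by $SU$-invariance and the fact that $|A_j|^2$ has determinant-weight $0$ equals the full $U(N-1)$-integral $\int_{U(N-1)}|A_j(g_1)|^2\,dg_1$, independent of $c_1$. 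Hence $\langle f\rangle(c_1,c_2) = \sum_{j+k=n}\int_{U(N-1)}|A_j|^2\int_{U(N)}|A_k|^2$, a constant in $(c_1,c_2)$.

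Finally, since $\langle f\rangle$ is constant, its average over the (roughly $q^{N}$ many, up to $O(q^{N-1})$) even primitive characters is just that constant, and the normalizing factor $q^{-(n-h-1)}=q^{-N}$ matches the count of characters up to a $1+O(1/q)$ factor (using the count of even primitive characters mod $T^{N+1}$ recalled before Proposition~\ref{thmb}); the $O(q^{N-1})$ error from non-primitive or lower-level characters, together with the fact that $f$ is bounded by $O_n(1)$ on the unitary groups, is absorbed in the limit. This yields exactly the right-hand side of \eqref{eq:sqrt_char_to_matrix}. The main obstacle I anticipate is the second step: verifying carefully that every monomial of $f$ falls within the admissible range for the equidistribution input — i.e.\ tracking how the degree bound $n\le N(N-1)$ translates into Sawin's hypothesis — and correctly carrying out the coset-average computation of $\langle f\rangle$, in particular justifying that the off-diagonal terms vanish and that the diagonal coset integrals coincide with the full unitary integrals; the residual error bookkeeping (non-primitive characters, branch-of-square-root conventions) is routine by comparison.
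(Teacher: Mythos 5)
Your overall architecture is the same as the paper's: expand $[u^n]\sqrt{\det(1-ug_1)\det(1-ug_2)}=\sum_{j+k=n}A_j(g_1)A_k(g_2)$, apply Theorem \ref{UtimesU_equi}, compute the coset average $\langle f\rangle$, and observe it is constant. However, there is a genuine gap at the crucial step, the evaluation of $\langle f\rangle$. You assert that each cross term with $j\neq j'$ "integrates to $0$ over the $SU$-coset because it carries a non-trivial power of $\det g$." This is false in general: on the coset $\{\det g = c\}$ the determinant is \emph{constant}, so nonzero homogeneity degree alone forces nothing. A homogeneous symmetric Laurent polynomial of degree $\ell\neq 0$ on $U(M)$ has vanishing coset average only when $\ell\not\equiv 0 \bmod M$ (this is exactly part (i) of Lemma \ref{coset_cancellation}); if $\ell$ is a nonzero multiple of $M$, the coset average can be nonzero (e.g.\ $\det g$ itself, or $A_{N-1}\overline{A_0}$ on $U(N-1)$). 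So your per-factor vanishing claim does not hold for cross terms with $j-j'$ divisible by $N-1$ (resp.\ $k-k'$ divisible by $N$). The correct rescue, which is the paper's argument, uses the product structure: $\langle A_jA_k\overline{A_{j'}A_{k'}}\rangle=\langle A_j\overline{A_{j'}}\rangle\langle A_k\overline{A_{k'}}\rangle$ with the two factors of opposite degrees $\ell$ and $-\ell$, $0<|\ell|\leq n$; since no nonzero integer of magnitude below $N(N-1)$ is divisible by both $N-1$ and $N$, at least one factor vanishes by Lemma \ref{coset_cancellation}, so the product is zero.

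Relatedly, you misplace the role of the hypothesis $n\leq N(N-1)$. It is not a "short support" or admissibility condition for the equidistribution input: Theorem \ref{UtimesU_equi}, as packaged, applies to any continuous class function on $U(N-1)\times U(N)$ once $N\geq 6$, so every monomial $A_jA_k\overline{A_{j'}A_{k'}}$ is automatically admissible there. The bound $n\leq N(N-1)$ is needed precisely in the coset computation just described, to guarantee that the off-diagonal degrees $\pm\ell$ cannot simultaneously be multiples of $N-1$ and of $N$. Your diagonal-term computation (degree $0$, so the coset average equals the full $U(N-1)$- and $U(N)$-integrals, by part (ii) of Lemma \ref{coset_cancellation}) and your final bookkeeping with the count of even primitive characters are fine; repairing the cross-term step as above would bring your argument in line with the paper's proof.
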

We remark that the right hand side of \eqref{eq:sqrt_char_to_matrix} may be seen to be equal to
$$
\int_{\substack{g_1 \in U(N-1) \\ g_2 \in U(N)}} \Big| [u^n] \sqrt{\det(1-u g_1) \det(1 - u g_2)}\Big|^2 \, dg_1 dg_2.
$$
In order to prove Corollary \ref{sqrt_char_to_matrix} we draw upon the following:

\begin{lem}
\label{coset_cancellation}
Let $f\colon U(N)\rightarrow\mathbb{C}$ be a function such that for $g\in U(N)$, $f(g)$ is a symmetric homogeneous Laurent polynomial of degree $k$ in the eigenvalues of $g$. The following hold.
\begin{enumerate}[i)]
\item If $k \not\equiv 0 \bmod N$, then
$$
\langle f \rangle (z) = 0, \quad \textrm{for all }\; |z|=1,
$$
\item If $k=0$, then
$$
\langle f \rangle (z)  = \int_{U(N)} f(g)\, dg, \quad \textrm{for all }\; |z|=1.
$$
\end{enumerate}
\end{lem}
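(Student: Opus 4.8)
The plan is to exploit the fact that $\langle f\rangle$ is defined by integration against the $\prod SU(N_i)$-invariant measure on a determinant-coset, together with the behaviour of a homogeneous symmetric Laurent polynomial under multiplication by a central scalar. Concretely, I would first unwind the definition: $\langle f\rangle(z)$ is the integral of $f$ over the coset $C_z = \{g \in U(N) : \det g = z\}$ against the unique $SU(N)$-invariant probability measure $\mu_z$ on $C_z$. The key symmetry is that for any $\omega \in U(1)$ with $\omega^N = 1$, the map $g \mapsto \omega g$ sends $C_z$ to itself (since $\det(\omega g) = \omega^N \det g = \det g$) and preserves $\mu_z$ (because left/right translation by the central element $\omega I \in SU(N)$ preserves the $SU(N)$-invariant measure). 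Therefore $\langle f\rangle(z) = \int_{C_z} f(\omega g)\, d\mu_z(g)$ for every $N$-th root of unity $\omega$.

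Next I would use homogeneity. Since $f$ is a homogeneous Laurent polynomial of degree $k$ in the eigenvalues $e^{i\theta_1},\dots,e^{i\theta_N}$ of $g$, and the eigenvalues of $\omega g$ are $\omega e^{i\theta_1},\dots,\omega e^{i\theta_N}$, we get $f(\omega g) = \omega^k f(g)$. Averaging the identity $\langle f\rangle(z) = \omega^k \int_{C_z} f(g)\, d\mu_z(g) = \omega^k \langle f\rangle(z)$ over the $N$ roots of unity $\omega$ (or simply picking a primitive one) gives $\langle f\rangle(z) = \big(\tfrac1N\sum_{\omega^N=1}\omega^k\big)\langle f\rangle(z)$. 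The Gauss-sum $\tfrac1N\sum_{\omega^N=1}\omega^k$ equals $0$ when $k \not\equiv 0 \bmod N$ and equals $1$ when $k \equiv 0 \bmod N$, which immediately yields $\langle f\rangle(z) = 0$ in case (i). For case (ii), when $k = 0$ the function $f$ is a class function that is literally constant on each determinant-coset only after averaging; here I would instead argue directly: $\langle f\rangle(z)$ is by definition a function of $z$ alone satisfying the defining integral relation against all test functions $\psi$, and one checks that the constant $\int_{U(N)} f(g)\, dg$ satisfies this relation. Indeed, fibering the Haar integral on $U(N)$ over the determinant map $U(N) \to U(1)$ (with base measure the Haar measure on $U(1)$ and fibre measures $\mu_z$), we have $\int_{U(N)} f\,\psi(\det g)\, dg = \int_{U(1)} \big(\int_{C_z} f\, d\mu_z\big)\psi(z)\, dz = \int_{U(1)}\langle f\rangle(z)\psi(z)\, dz$; but when $k=0$ one also shows $\int_{C_z} f\, d\mu_z$ is independent of $z$ (using the same root-of-unity argument with $k=0$, or more robustly the connectedness of $SU(N)$ and the fact that all the cosets $C_z$ are translates of $SU(N)$ under a continuously varying family, along which $\int f\, d\mu$ varies continuously but must—after a further averaging argument—be constant), and integrating that constant against $\psi$ forces it to equal $\int_{U(N)} f\, dg$.

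I expect the mild obstacle to be the clean handling of case (ii): making rigorous that $\int_{C_z} f\, d\mu_z$ is genuinely independent of $z$ when $k=0$, rather than merely that its average over $z$ equals $\int_{U(N)} f$. The cleanest route is probably to note that any $SU(N)$-invariant measure on the coset $C_z$ is the pushforward of the $SU(N)$-Haar measure under $g_0 \mapsto g_0 \cdot w$ for any fixed $w \in C_z$, so $\int_{C_z} f\, d\mu_z = \int_{SU(N)} f(g_0 w)\, dg_0$; since $f$ has degree $0$, writing $w = z^{1/N}\cdot (z^{-1/N}w)$ with $z^{-1/N} w \in SU(N)$ and again using $f(\lambda g) = f(g)$ for scalars $\lambda$ shows this integral equals $\int_{SU(N)} f(g_0)\, dg_0 = \int_{U(N)} f\, dg$, the last step by invariance of Haar measure and the fact that $f$ depends only on eigenvalues (hence is insensitive to the $U(1)$-part of the decomposition $U(N) \cong (SU(N)\times U(1))/\mu_N$ when $k=0$). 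This makes both parts uniform in $z$ on $|z|=1$, completing the proof.
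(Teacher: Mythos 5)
Your proof is correct, but it takes a different route from the paper's. You work pointwise on each determinant coset: you exploit the central elements $\omega I \in SU(N)$ with $\omega^N=1$, which preserve the coset and its invariant measure, so homogeneity gives $\langle f\rangle(z)=\omega^k\langle f\rangle(z)$ and hence vanishing when $k\not\equiv 0 \bmod N$; for $k=0$ you identify the coset integral with $\int_{SU(N)} f(g_0 w)\,dg_0$ and strip off the scalar $z^{1/N}$ to get a $z$-independent answer equal to $\int_{U(N)} f$. The paper never touches the coset measures in its proof: it works with the defining test-function relation, reduces by Fourier analysis on $U(1)$ to $\psi(z)=z^\ell$, and kills each integral $\int_{U(N)} f(g)(\det g)^\ell\,dg$ using scalar-invariance of Haar measure on $U(N)$, since $f(g)(\det g)^\ell$ is homogeneous of degree $k+N\ell\neq 0$. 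Your argument is more structural and yields the value of $\langle f\rangle(z)$ directly for each $z$ (with the modulus-$N$ condition arising naturally from the center of $SU(N)$), at the cost of invoking the translation description of the coset measure and, for part (ii), the disintegration of Haar measure over the determinant together with the decomposition $U(N)\cong (SU(N)\times U(1))/\mu_N$; the paper's argument is lighter on measure-theoretic setup, needing only Haar measure on $U(N)$ and density of the characters $z^\ell$ among test functions, but it identifies $\langle f\rangle$ only through the uniqueness clause in its definition rather than by direct computation on the coset.
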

\begin{proof}
Both i) and ii) make use of the following assertion: that if $F\colon U(N) \rightarrow\mathbb{C}$ is a symmetric homogeneous polynomial of degree $k \neq 0$ in the eigenvalues of a matrix from $U(N)$, then
\begin{equation}
\label{full_cancellation}
\int_{U(N)} F(g)\, dg = 0.
\end{equation}
For, the Haar measure is invariant under scalar multiplication, so for any $c \in U(1)$,
$$
0 = \int_{U(N)} F(g)\, dg - \int_{U(N)} F(cg)\, dg = (1-c^k) \int_{U(N)} F(g)\, dg.
$$
If $k \neq 0$, there exists $c \in U(1)$ such that $(1-c^k) \neq 0$ and \eqref{full_cancellation} follows.

Turning to i), note that this will be proved if we show for $k \not \equiv 0 \bmod N$ that
\begin{equation}
\label{part_cancellation}
\int_{U(N)} f(g) \psi(\det g)\, dg = 0,
\end{equation}
for all continuous $\psi\colon U(1) \rightarrow\mathbb{C}$. In turn by Fourier analysis, since $\det g \in U(1)$ for all $g \in U(N)$, to establish \eqref{part_cancellation} we need only establish it for $\psi(z) = z^\ell$ with $\ell \in \mathbb{Z}$. But if $f(g)$ is of degree $k$ in the eigenvalues of $g$, then 
$$
f(g) (\det g)^\ell
$$
is of degree $k+N\ell$. As $k \not\equiv 0 \bmod N$, we have $k + N\ell \neq 0$, and hence
$$
\int_{U(N)} f(g) (\det g)^\ell\, dg = 0,
$$
establishing the claim i).

For ii), our proof is similar. We must show
\begin{equation}
\label{splitting_cancellation}
\int_{U(N)} f(g) \psi(\det g)\, dg = \int_{U(N)} f(g)\, dg \, \int_{U(N)} \psi(\det g)\, dg.
\end{equation}
As before it suffices to verify this claim when $\psi(z) = z^\ell$. For $\ell = 0$ this is clear, and when $\ell \neq 0$, note that
$$
\int_{U(N)} (\det g)^\ell\, dg = 0,
$$
so that we establish \eqref{splitting_cancellation} by showing
$$
\int_{U(N)} f(g) (\det g)^\ell\, dg = 0.
$$
But as $f(g) (\det g)^\ell$ is of degree $\ell \neq 0$, this is indeed the case, establishing the claim.
\end{proof}

We now may return to Corollary \ref{sqrt_char_to_matrix}.

\begin{proof}[Proof of Corollary \ref{sqrt_char_to_matrix}]
Note that
$$
[u^n] \sqrt{\det(1-u\Theta_\chi) \det(1- u \Theta_{\chi\cdot \chi_2})} = \sum_{\substack{j+k=n \\ j, k \geq 0}} A_j(\Theta_\chi) A_k(\Theta_{\chi\cdot \chi_2}).
$$
Hence the left hand side of \eqref{eq:sqrt_char_to_matrix} is
\begin{equation}
\label{eq:sqrt_char_to_matrix_exp}
\lim_{q\rightarrow\infty} \frac{1}{q^{n-h-1}} \sum_{\substack{\chi\; (T^{n-h}) \\ \textrm{ev., prim.}}} \sum_{\substack{j+k = n \\ j,k \geq 0}}\sum_{\substack{j'+k' = n \\ j',k' \geq 0}} A_j(\Theta_\chi)A_k(\Theta_{\chi\cdot\chi_2}) \overline{A_{j'}(\Theta_\chi)A_{k'}(\Theta_{\chi\cdot\chi_2})}.
\end{equation}
We will need to evaluate the random matrix coset integral
$$
\langle A_j A_k \overline{A_{j'}A_{k'}}\rangle = \langle A_j\overline{A_{j'}} \rangle \langle A_k \overline{A_{k'}}\rangle.
$$
Note that if $j = j'$, then $k = k'$ also. Noting that $A_j(g) \overline{A_{j}(g)} = A_j(g) A_j(g^{-1})$ and likewise for $A_k$, one may see that $A_j \overline{A_j}$ and $A_k \overline{A_k}$ are homogeneous symmetric Laurent polynomials of degree $0$. Thus by Lemma \ref{coset_cancellation}, we have for all $|z|=1$, 
$$
\langle A_j \overline{A_j}\rangle(z) = \int_{U(N-1)} |A_j(g)|^2 \, dg,
$$
$$
\langle A_k \overline{A_k}\rangle(z) = \int_{U(N)} |A_k(g)|^2\, dg.
$$
Furthermore, in the sum \eqref{eq:sqrt_char_to_matrix_exp}, if $j\neq j'$ and $k\neq k'$, we may reason in the same way to see that $A_j \overline{A_{j'}}$ and $A_k\overline{A_{k'}}$ are homogeneous symmetric Laurent polynomials of non-zero degrees, say $\ell$ and $-\ell$ respectively, with $|\ell| \leq n < N(N-1)$. As no non-zero number smaller in magnitude than $N(N-1)$ is divisible by both $N$ and $N-1$, Lemma \ref{coset_cancellation} implies that one of 
$$
\int_{U(N-1)} A_j(g) \overline{A_{j'}(g)}\, dg = 0 \quad \textrm{or} \quad \int_{U(N)} A_k(g) \overline{A_{k'}(g)}\, dg = 0
$$
holds, so in particular the product is always $0$.

From this analysis it follows that for all matrices $\Theta_\chi$ and $\Theta_{\chi\cdot \chi_2}$
$$
\langle A_j A_k \overline{A_{j'} A_{k'}} \rangle(\det \Theta_\chi, \det \Theta_{\chi\cdot \chi_2}) = \begin{cases} \int_{U(N-1)}|A_j(g_1)|^2 dg_1 \cdot \int_{U(N)} |A_k(g_2)|^2\, dg_2 & \textrm{if}\; j = j' \\ 0 & \textrm{otherwise} \end{cases}.
$$
Thus using Theorem \ref{UtimesU_equi}, \eqref{eq:sqrt_char_to_matrix_exp} simplifies to
$$
\sum_{\substack{j+k=n \\ j,k \geq 0}} \int_{U(N-1)} |A_j(g_1)|^2\, dg_1 \int_{U(N)} |A_k(g_2)|^2\, dg_2,
$$
as claimed.
\end{proof}

\section{\texorpdfstring{$z$}{z}-measures on partitions}\label{sec:zmeasures}

\subsection{Definitions}\label{sec:zdefinitions}
In order to give a succinct evaluation of the integrals on the right hand side of Corollary \ref{sqrt_char_to_matrix}, we make use of $z$-measures on partitions, first introduced by Kerov, Olshanski, and Vershik (in \cite{kerov1993harmonic}). These can be thought of as a generalization of Plancherel measures. We give a short introduction here; a more thorough introduction can be found in \cite{olshanski2003introduction}. The $z$-measures are a two-parameter family of measures on partitions, though it is often natural to specialize to a one-parameter subfamily. In order to define the $z$-measures we make use of standard notation in enumerative combinatorics, along the lines of e.g. \cite[Ch. 7]{stanley1999enumerative}. We view partitions $\lambda \vdash n$ as Young diagrams with $n$ boxes. Recall (from e.g. \cite[Sec. 7.21]{stanley1999enumerative}) that for a square $\square$ in $\lambda$ with position $(i,j)$ (where $1 \leq j \leq \lambda_i$), the \emph{content} $c(\square)$ is defined by
\begin{equation*}
c(\square) = j-i.
\end{equation*}
We let $\dim(\lambda)$ be the dimension of the irreducible representation of $S_n$ associated to the partition $\lambda$; equivalently $\dim(\lambda)$ is equal to the number of standard Young tableaux of shape $\lambda$. The $z$-measure on partitions of $n$ with parameters $z$ and $z'$, written $M^{(n)}_{z,z'}$ is the measure on the set of all partitions $\lambda$ of $n$ satisfying
\begin{equation}
\label{z_measure_def}
M^{(n)}_{z,z'}(\lambda):= \frac{\dim(\lambda)^2}{n! (z z')_n} \prod_{\square \in \lambda} (z+c(\square))(z'+c(\square)).
\end{equation}
Recall that $(x)_j := (x)(x+1)\cdots (x+j-1)$ is the rising factorial Pochhammer symbol. The expression  \eqref{z_measure_def} is well defined for all $z, z' \in \mathbb{C}$ with $zz' \notin \mathbb{Z}_{\leq 0}$. Furthermore we use the convention that $\varnothing$ is the sole partition of $0$ and for any $z,z'$,
\begin{equation}
M_{z,z'}^{(0)}(\varnothing) = 1.
\end{equation}
For any $n$ and $z,z' \in \mathbb{C}$ with $zz' \notin \mathbb{Z}_{\leq 0}$ one has
\begin{equation}
\sum_{\lambda \vdash n} M_{z,z'}^{(n)}(\lambda) = 1,
\end{equation}
though this fact is not obvious (see e.g. \cite{okounkov2001sl} for a proof). It is not always the case that $M_{z,z'}^{(n)}(\lambda) \geq 0$ for all $\lambda$ (so that in some cases $M_{z,z'}^{(n)}$ must be viewed as a signed measure) but when, for instance, $z' = \overline{z}$, plainly \eqref{z_measure_def} is always non-negative. 

Note from the definition \eqref{z_measure_def}, for fixed $n$, this measure tends toward the Plancherel measure as $z,z'\rightarrow\infty$.

We denote $M_z^{(n)}(\lambda):= M^{(n)}_{z,\overline{z}}(\lambda)$, and moreover for a subset $A$ of the set of all partitions of $n$, we use the notations
\begin{equation}
\mathbb{P}_{z,z'}^{(n)}(\lambda \in A) = \sum_{\lambda \in A} M_{z,z'}^{(n)}(\lambda), \quad \text{and}\quad \mathbb{P}_z^{(n)}(\lambda \in A) = \sum_{\lambda \in A} M_z^{(n)}(\lambda).
\end{equation}

It is known that there exists a scaling limit of the $z$-measures as $n\rightarrow\infty$; these scaling limits were first investigated as a part of representation theory on the infinite symmetric group. We do not review the full theory here, instead referring the reader to \cite{olshanski2003introduction} for an introduction. The result from this theory that we will make use of is
\begin{thm}
\label{thm:z_scalinglimit}
For any $z \in \mathbb{C}\setminus \mathbb{Z}_{\leq 0}$, there exists a random variable $\alpha_1^{(z)}$ lying almost surely in the interval $[0,1]$ such that for $\lambda \vdash n$ chosen according to the $z$-measure with parameters $z, \overline{z}$ we have
\begin{equation}
\label{z_limiting}
\lim_{n\rightarrow\infty}\mathbb{P}\Big(\frac{\lambda_1}{n} \leq x\Big) = \mathbb{P}(\alpha_1^{(z)} \leq x),
\end{equation}
for all real $x$. 

Moreover for $z \in \mathbb{C}\setminus\mathbb{Z}_{\leq 1}$ with $z' = \overline{z}$ as above, the function $F_z(x) = \mathbb{P}(\alpha_1^{(z)} \leq x)$
is continuous for all $x \in \mathbb{R}$.
\end{thm}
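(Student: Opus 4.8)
\emph{Overview.} I would prove Theorem~\ref{thm:z_scalinglimit} by identifying $\alpha_1^{(z)}$ as a functional of the spectral $z$-measure on the Thoma simplex and importing the relevant limit theorem and regularity statement from the harmonic analysis of the infinite symmetric group. \emph{Step 1 (passage to the boundary of the Young graph).} Let $\Omega$ be the Thoma simplex, the set of pairs $\omega=(\alpha;\beta)$ with $\alpha_1\ge\alpha_2\ge\cdots\ge0$, $\beta_1\ge\beta_2\ge\cdots\ge0$ and $\sum_i\alpha_i+\sum_j\beta_j\le1$; it is the boundary of the Young graph in the sense of Vershik and Kerov. By \cite{kerov1993harmonic} (see also \cite{olshanski2003introduction,borodin1998point,okounkov2001sl}), for $z\notin\mathbb Z_{\le0}$ the sequence $\{M^{(n)}_{z,\bar z}\}_{n\ge0}$ — which consists of genuine probability measures since the choice $z'=\bar z$ makes the weights in \eqref{z_measure_def} non-negative — is precisely the sequence of level-$n$ projections, under the canonical Markov kernels of the Young graph, of a single probability measure $P_z$ on $\Omega$, the spectral $z$-measure with parameters $z,\bar z$.

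\emph{Step 2 (the scaling limit).} I would then invoke the Vershik--Kerov ergodic method: for a path $\varnothing=\lambda^{(0)}\nearrow\lambda^{(1)}\nearrow\cdots$ distributed according to the central measure whose level-$n$ marginal is $M^{(n)}_{z,\bar z}$, the rescaled row lengths $\lambda^{(n)}_i/n$ converge almost surely to $\alpha_i(\omega)$, where $\omega$ is the almost-surely-defined terminal point of the path and has law $P_z$. Reading off the level-$n$ marginal, this says exactly that under $\lambda\sim M^{(n)}_{z,\bar z}$ one has $\lambda_1/n\Rightarrow\alpha_1^{(z)}:=\alpha_1(\omega)$ with $\omega\sim P_z$. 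Since $\alpha_1(\omega)\in[0,1]$ pointwise on $\Omega$, we get $\alpha_1^{(z)}\in[0,1]$ almost surely; and writing $F_z(x):=\mathbb P(\alpha_1^{(z)}\le x)=P_z(\alpha_1\le x)$, weak convergence gives $\mathbb P(\lambda_1/n\le x)\to F_z(x)$ at every continuity point of $F_z$, and hence — granting Step~3 — at every real $x$. The only $z$ lying in $\mathbb Z_{\le1}\setminus\mathbb Z_{\le0}$ is $z=1$, for which $M^{(n)}_{1,1}$ is the point mass at the single row $(n)$, so that $\lambda_1/n\equiv1$ and $\alpha_1^{(1)}\equiv1$; here the claimed convergence holds at every $x$ by inspection, so the ``for all real $x$'' assertion of the first part is justified in all cases.

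\emph{Step 3 (continuity of $F_z$ for $z\notin\mathbb Z_{\le1}$).} I would deduce this from the determinantal description of $P_z$ due to Borodin--Olshanski \cite{borodin1998point} and Olshanski \cite{olshanski1998point}: after the standard change of variables the coordinates of $\omega\sim P_z$ form a determinantal point process on $(-1,1)\setminus\{0\}$ whose correlation kernel is continuous, and $\alpha_1^{(z)}$ is the position of its rightmost particle. For such a process $F_z(x)$ equals the gap probability $\mathbb P(\text{no particle in }(x,1))$, a Fredholm determinant that varies continuously with $x$ because the kernel is continuous and locally trace class and almost every configuration is simple; hence $F_z$ is continuous. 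When $z$ is an integer $\ge2$ the process degenerates to a finite ensemble on $(0,1)$ and the same continuity is contained in \cite{keating2018sums}; for non-integer $z$ one may instead quote the recent analysis of \cite{olshanski2018topological,korotkikh2018transition}. The value $z=1$ is genuinely excluded, since there $F_1=\mathbf1_{[1,\infty)}$ has an atom at $1$.

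\emph{Main obstacle.} Steps~1 and~2 are soft consequences of the boundary theory of the Young graph, and I expect the real difficulty to lie entirely in the atomlessness claim of Step~3 — in particular in ruling out an atom of the law of $\alpha_1^{(z)}$ at the left edge of its support (equal to $1/k$ when $z=k$ is a positive integer, and at $0$ or at an interior point in other regimes). This requires genuine input about the fine structure of the spectral $z$-measure, namely the continuity and decay of its correlation kernel as developed in \cite{borodin1998point,olshanski1998point} and refined in \cite{olshanski2018topological,korotkikh2018transition}, rather than anything formal about coherent families.
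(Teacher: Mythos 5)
Your Steps 1 and 2 are fine and amount to an alternative derivation of the part of the theorem that the paper simply imports from the literature: the paper cites \cite[Thm.~1.6]{borodin2005z} for the convergence in distribution of $\lambda_1/n$, whereas you reconstruct it from coherence of the $z$-measures on the Young graph plus the Vershik--Kerov ergodic method; your handling of $z=1$ matches Remark \ref{z_remark}. The real content of Theorem \ref{thm:z_scalinglimit}, however, is the continuity of $F_z$ (needed to upgrade weak convergence to convergence at every real $x$), and there your Step 3 has a genuine gap.

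You assert that, after a change of variables, the Thoma-simplex coordinates of $\omega\sim P_z$ form a determinantal point process on $(-1,1)\setminus\{0\}$ with a continuous kernel, so that $F_z(x)$ is a continuously varying Fredholm determinant. That is not what \cite{borodin1998point,olshanski1998point} provide: the determinantal (Whittaker-kernel) structure there holds for the \emph{lifted} process, obtained by multiplying all coordinates by an independent Gamma-distributed factor, and the lifting cannot be undone configuration-wise, so $\mathbb{P}(\alpha_1^{(z)}\le s)$ is not a gap probability of that determinantal process; the unlifted boundary process is not asserted to be determinantal at all. What the literature does give for the unlifted process is the family of correlation measures $\mu_n$, and the argument that actually closes the gap (the one the paper uses) runs: write $F_z(s)=\mathbb{P}(\#_{(s,1]}=0)$ as in \eqref{to_gap_prob}, expand via \eqref{gap_to_correlations}, note that the simplex constraint $\sum_i(\alpha_i+\beta_i)\le 1$ forces $\#_{(s,1]}\le\lfloor 1/s\rfloor$ so the expansion is a \emph{finite} sum for each $s\in(0,1)$, and then use the continuity and non-singularity of $\mu_n$ on the relevant sets; continuity at the endpoint $s=0$, i.e.\ the absence of an atom of $\alpha_1^{(z)}$ at $0$, needs the further input that the process is simple (simplicity plays no role in continuity of a Fredholm determinant, which is where you invoke it). Your fallback citation of \cite{olshanski2018topological,korotkikh2018transition} does not repair this: those results concern the topological support of the spectral measure (used in the paper for Theorem \ref{thm:F_z_positivity}), and full support does not imply atomlessness. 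Finally, for integer $z=k\ge2$ the continuity is not literally contained in \cite{keating2018sums}: one needs the bridge $F_k(s)=(k^2-1)!\,s^{k^2-1}\gamma_k(s^{-1})$ of Proposition \ref{prop:integral_rep}, which rests on Theorem \ref{RMT_to_zmeasures}, or else an explicit description of the degenerate-series boundary measure; your sketch gestures at the latter but supplies no argument.
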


We simply take this theorem as our definition of $\alpha_1^{(z)}$ -- that is, $\alpha_1^{(z)}$ is the random variable with distribution function given by this limit -- but we note that there exists a more sophisticated perspective in which the random variable $\alpha_1^{(z)}$ is \emph{the largest part of the $z$-measure point process with parameters $z, \overline{z}$ on the Thoma simplex}; see again \cite{borodin1998point} for more about this latter object and its connection to the infinite symmetric group. We adopt the notational convention that $\alpha_1 = \alpha_1^{(1/2)}$.

Theorem \ref{thm:z_scalinglimit} as written does not directly appear in the literature, but it can be proved by piecing together several results proved in the papers \cite{olshanski1998point,borodin1998point,borodin2005z}. We outline the proof of the theorem from these pieces in Appendix \ref{sec:Further}.

\begin{remark}
\label{z_remark}
The theorem above does not treat the case $z \in \mathbb{Z}_{\leq 0}$ and does not fully treat $z=1$. These cases will not be necessary for us in what follows but in fact their limit can be analyzed directly from the definition \eqref{z_measure_def}. Still with $z' = \overline{z}$, observe the following. For $z \in \mathbb{Z}_{\leq 0}$ then with probability $1$ we have $\lambda_1 \leq |z|$ under the $z$-measure, and so $\lambda_1/n \rightarrow 0$. For $z=1$, for $\lambda \vdash n$ under the $z$-measure one must have with probability $1$ that $\lambda_1 = n$ and so $\lambda_n/n \rightarrow 1.$
\end{remark}

\subsection{The evaluation of random matrix integrals}\label{sec:RMT}
We need the following result in order to evaluate random matrix integrals such as those appearing in Corollary \ref{sqrt_char_to_matrix}.

\begin{thm}\label{RMT_to_zmeasures}
For $g \in U(N)$, with $A_{n,(z)}(g)$ defined by \eqref{A_kz}, we have
\begin{equation}\label{equal_indices}
\int_{U(N)} A_{n,(z)}(g) A_{n,(z')}(g^{-1})\, dg = \frac{(zz')_n}{n!} \mathbb{P}_{z,z'}^{(n)}(\lambda_1 \leq N).
\end{equation}
Furthermore for $n\neq m$, we have
\begin{equation}\label{unequal_indices}
\int_{U(N)} A_{n,(z)}(g) A_{m,(z')}(g^{-1})\, dg = 0.
\end{equation}
\end{thm}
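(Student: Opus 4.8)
The plan is to expand everything in terms of Schur functions and use the orthogonality of Schur functions under integration over $U(N)$. First I would recall that $\det(1-ug)^z = \sum_{k \ge 0} A_{k,(z)}(g) u^k$, and that by Newton's identity / the known generating function identity for symmetric functions, $\det(1-ug)^{-z}$ has a Cauchy-type expansion; dualizing, one gets that $A_{k,(z)}(g) = [u^k]\det(1-ug)^z$ is, up to scalar, a sum over partitions $\lambda \vdash k$ of coefficients times $s_\lambda$ evaluated at the eigenvalues of $g$. Concretely I expect to use the identity $\det(1-ug)^{z} = \sum_{\lambda} (-u)^{|\lambda|} \, c_\lambda(z)\, s_\lambda(g)$ where the sum is over partitions $\lambda$ with at most $N$ rows (since $s_\lambda(g) = 0$ for $g \in U(N)$ when $\lambda$ has more than $N$ rows), and $c_\lambda(z) = \prod_{\square \in \lambda}(z + c(\square))/\text{(hook lengths)}$; here the content product $\prod_{\square}(z+c(\square))$ is exactly the numerator appearing in the $z$-measure \eqref{z_measure_def}, which is the key link. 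I would cite this as a standard identity (it is, e.g., the specialization of the dual Cauchy identity / the formula for $p_1^{\text{-expansion}}$, or can be extracted from Macdonald's book).

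Second I would plug two such expansions into the integral $\int_{U(N)} A_{n,(z)}(g) A_{m,(z')}(g^{-1})\, dg$. Using $\overline{s_\lambda(g)} = s_\lambda(g^{-1})$ and the Schur orthogonality $\int_{U(N)} s_\lambda(g) s_\mu(g^{-1})\, dg = \delta_{\lambda\mu}$ for $\lambda,\mu$ with at most $N$ parts, the double sum collapses to a single sum over $\lambda$ with at most $N$ rows and $|\lambda| = n = m$. This immediately gives \eqref{unequal_indices} when $n \ne m$, since there are no common terms. For $n = m$, the surviving sum is $\sum_{\lambda \vdash n,\ \lambda_1' \le N} c_\lambda(z) c_\lambda(z')$ where I need to be careful whether the constraint from $g \in U(N)$ is on the number of rows $\ell(\lambda) \le N$ or, after possibly transposing in the course of the identity, on $\lambda_1 \le N$. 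Matching against the claimed answer $\frac{(zz')_n}{n!}\mathbb{P}_{z,z'}^{(n)}(\lambda_1 \le N)$ tells me the constraint should read $\lambda_1 \le N$; this is consistent if the natural identity produces $\det(1-ug)^z = \sum_\lambda (\pm u)^{|\lambda|} \big(\prod_\square (z+c(\square))\big) \frac{s_{\lambda'}(g)}{\text{hooks}}$ with a transpose, so that the row-bound on $s_{\lambda'}$ becomes the column-bound $\lambda_1 \le N$. I would sort out this transpose bookkeeping explicitly, using that content under transposition satisfies $c(\square') = -c(\square)$ and hook lengths are transpose-invariant.

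Third, I would verify that $\sum_{\lambda \vdash n,\ \lambda_1 \le N} \Big(\prod_{\square\in\lambda}(z+c(\square))(z'+c(\square))\Big) / (\text{product of hook lengths})^2 = \frac{(zz')_n}{n!}\mathbb{P}_{z,z'}^{(n)}(\lambda_1 \le N)$. By \eqref{z_measure_def} and the hook-length formula $\dim(\lambda) = n!/\prod_\square h(\square)$, we have $M_{z,z'}^{(n)}(\lambda) = \frac{n!}{(zz')_n \prod_\square h(\square)^2}\prod_\square (z+c(\square))(z'+c(\square))$, so $\frac{(zz')_n}{n!} M_{z,z'}^{(n)}(\lambda)$ is exactly the $\lambda$-summand above; summing over $\lambda \vdash n$ with $\lambda_1 \le N$ gives the claim. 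So the only real content beyond bookkeeping is the Schur expansion of $\det(1-ug)^z$ with the content-product coefficients.

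The main obstacle I expect is getting the transpose/constraint and the sign and normalization in the Schur expansion of $\det(1-ug)^z$ exactly right, and making sure the hook-length formula converts the coefficients into precisely the $z$-measure weights; the orthogonality and the $n \ne m$ vanishing are then immediate. A secondary point to handle carefully is that for $z$ a negative integer or other degenerate values the "measure" is only a signed measure, but the algebraic identity \eqref{z_measure_def} still makes sense and the proof via symmetric functions is purely formal in $z, z'$, so it goes through verbatim as long as $zz' \notin \mathbb{Z}_{\le 0}$ (and by polynomiality in $z,z'$ one can even drop that restriction for the integral identity itself, both sides being polynomials in $z, z'$).
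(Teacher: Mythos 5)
Your proposal is correct and follows essentially the same route as the paper's proof: expand $\det(1-ug)^z$ via the dual Cauchy identity and the principal specialization $s_\lambda(1,\dots,1)$ (extending from integer $z$ to all $z$ by polynomiality of the coefficients), apply Schur orthogonality over $U(N)$ with the transpose turning the row bound into $\lambda_1 \le N$, and match the resulting content products against the $z$-measure weights via the hook-length formula. The only cosmetic difference is that the paper disposes of the case $n \neq m$ by scalar invariance of Haar measure rather than by noting that partitions of different sizes give orthogonal Schur functions; both are immediate.
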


\begin{remark}
As noted before, we have used the convention that $\mathbb{P}_{z,z'}^{(0)}(\lambda_1 \leq N) = 1$ for all $N$ and $z,z'$, so that the above identities make sense even for $n=0$.
\end{remark}

We use Schur functions to prove this theorem. For $x = (x_1,...,x_N)$ and $\lambda$ a partition, we use the notation $s_\lambda(x)$ to denote the Schur function of shape $\lambda$ (see \cite[Ch. 7]{stanley1999enumerative}). 

We will use of the following well-known results:

First, we recall the \emph{dual Cauchy identity} (see \cite[Thm 7.14.3]{stanley1999enumerative}),
\begin{equation}\label{dual_cauchy}
\prod_{i,j} (1+ x_i y_j) = \sum_\lambda s_\lambda(x) s_{\lambda'}(y),
\end{equation}
where $\lambda'$ is the dual partition to $\lambda$.

Second, we recall the following evaluation of Schur functions (proved by combining \cite[Cor 7.21.4]{stanley1999enumerative} and \cite[Cor 7.21.6]{stanley1999enumerative}),
\begin{equation}\label{schur_eval}
s_\lambda(\underbrace{1,...,1}_{k}) = \frac{\dim(\lambda)}{n!} \prod_{\square\in\lambda}(k+c(\square)),
\end{equation}
for $\lambda \vdash n$.

Third, we recall the orthogonality relations for Schur functions in eigenvalues of the unitary group (see e.g. \cite{bump2004lie}). If $g \in U(N)$ has eigenvalues $y_1,...,y_N$ and we use the Schur function notation $s_\lambda(g):= s_\lambda(y_1,...,y_N)$, for any two partitions $\lambda$ and $\nu$,
\begin{equation}\label{schur_orthogonality}
\int_{U(N)} s_\lambda(g) s_\nu(g^{-1})\, dg = \delta_{\lambda = \nu, \ell(\lambda) \leq N}.
\end{equation}

\begin{proof}[Proof of Theorem \ref{RMT_to_zmeasures}]
We start by specializing to the case where $z$ is a positive integer; later on we will consider more general $z$. We make use of the dual Cauchy identity \eqref{dual_cauchy} in the variables $x_1,...,x_z$ and $y_1,...,y_N$ where for all $i$, $x_i = -u$, and $y_1,...,y_N$ are the $N$ eigenvalues  of $g \in U(N)$. The dual Cauchy identity tells that
\begin{equation}
\det(1-ug)^z = \sum_\lambda s_\lambda(-u,...,-u) s_{\lambda'}(g) = \sum_\lambda (-u)^{|\lambda|} s_\lambda(\underbrace{1,...,1}_z) s_{\lambda'}(g).
\end{equation}
Yet from \eqref{schur_eval}, we see we can write this as
\begin{equation}\label{integer_z_only}
\det(1-ug)^z = \sum_{n=0}^\infty u^n \Big((-1)^n \sum_{\lambda \vdash n} \frac{\dim(\lambda)}{n!} \prod_{\square \in \lambda} (z+c(\square)) s_{\lambda'}(g)\Big),
\end{equation}
where we adopt the convention that the coefficient for $n=0$ is $1$. Note that we have so far only proved \eqref{integer_z_only} for positive integer $z$.

For $|u| < 1$, the binomial series tells us that
\begin{equation*}
(1-uy_i)^z = \sum_{n=0}^\infty u^n \Big( (-y_i)^n \frac{(z)_n}{n!}\Big),
\end{equation*}
for all complex $z$. In particular the coefficients of $u^n$ in this series are polynomials in $z$. Multiplying $N$ such identities, it follows that for $|u|< 1$ and all complex $z$,
\begin{equation*}
\det(1-ug)^z = \prod_{i=1}^N (1-u y_i)^z = \sum_{n=0}^\infty u^n P_{n,g}(z),
\end{equation*}
where $P_{n,g}(z)$ are polynomials in $z$. From \eqref{integer_z_only} we obtain the expression
\begin{equation}\label{polynomial_identity}
P_{n,g}(z) = (-1)^n \sum_{\lambda \vdash n} \frac{\dim(\lambda)}{n!} \prod_{\square \in \lambda} (z+c(\square)) s_{\lambda'}(g),
\end{equation}
valid for positive integer $z$. But as both the left and right hand sides are polynomials in $z$ equal at all positive integers, it follows that this identity holds for all $z\in \mathbb{C}$.

But of course, $P_{n,g}(z) = A_{n,(z)}(g)$, so that using \eqref{polynomial_identity} and orthogonality relations \eqref{schur_orthogonality} for Schur functions,
\begin{align*}
\int_{U(N)} A_{n,(z)}(g) A_{n,(z')}(g^{-1})\, dg &= \sum_{\lambda \vdash n} \frac{(\dim \lambda)^2}{(n!)^2} \prod_{\square \in \lambda} (z+c(\square))(z'+c(\square)) \delta_{\ell(\lambda')\leq N} \\
&= \frac{(zz')_n}{n!} \mathbb{P}_{z,z'}^{(n)}(\lambda_1 \leq N).
\end{align*}
This verifies \eqref{equal_indices}. 

By contrast \eqref{unequal_indices} is much simpler; $A_{n,(z)}$ and $A_{m,(z)}$ are symmetric homogeneous polynomials of degree $n$ and $m$ respectively. Using the invariance of Haar measure on the unitary group under scalar multiplication, we have
\begin{multline*}
\int_{U(N)} A_{n,(z)}(g) A_{m,(z')}(g^{-1})\, dg = \int_{U(N)} A_{n,(z)}(\omega g) A_{m,(z')}(\omega^{-1} g^{-1})\, dg \\ = \omega^{n-m} \int_{U(N)} A_{n,(z)}(g) A_{m,(z')}(g^{-1})\, dg,
\end{multline*}
for any $|\omega|=1$. But if $n\neq m$, this can only be the case if this integral vanishes; that is, \eqref{unequal_indices} holds.
\end{proof}

\section{Arithmetic functions and \texorpdfstring{$z$}{z}-measures}\label{sec:arith_zmeasures}

\subsection{On Theorem \ref{main_thm_Variance}}\label{sec:mainthm_proof}
We are now able to prove our main result Theorem \ref{main_thm_Variance}. Indeed, it follows quickly by combining results proved above.
\begin{proof}[Proof of Theorem \ref{main_thm_Variance}]
We note from Proposition \ref{thmb} and Corollary \ref{sqrt_char_to_matrix} that for $N = n-h-1$,
\begin{equation*}
\Var_{A \in \mathcal{A}_{n,q}}( \nu_b(A;h)) = q^{h+1} \sum_{\substack{j+k=n \\ j,k \geq 0}} \int_{U(N-1)} |A_j(g_1)|^2\, dg_1 \int_{U(N)} |A_k(g_2)|^2\, dg_2 + o(q^{h+1}),
\end{equation*}
for $n \leq N(N-1)$ and $0 \leq h \leq n-7$ (the upper bound restriction comes from requiring that $N \geq 6$ in Corollary \ref{sqrt_char_to_matrix}). But then these integrals are evaluated using Theorem \ref{RMT_to_zmeasures} with $z = z' = 1/2$, and the result is Theorem \ref{main_thm_Variance}.
\end{proof}

\subsection{On Theorem \ref{d_z_variance}}\label{sec:dz_proof}
The idea behind this theorem, as in the proof of Theorem \ref{main_thm_Variance}, is to use an equidistribution theorem for the zeros of $L$-functions to relate the variance we seek to compute to integrals $\int_{U(N-1)} |[u^n] \det(1-ug)^z|^2 \, dg$. In this case however we can rely upon results already in the literature -- we make use of the set up in \cite{rodgers2018arithmetic}.

\begin{proof}[Proof of Theorem \ref{d_z_variance}]
From \eqref{d_z_def}, we see that if $f$ is squarefree with $\omega(f)$ the number of distinct prime factors of $f$ (so $f = P_1\cdots P_{\omega(f)}$, with all $P_i$ distinct primes), then
\begin{equation}\label{z_tothe_omega}
d_z(f) = z^{\omega(f)}.
\end{equation}
One the other hand, by Proposition 9.6 of \cite{rodgers2018arithmetic}, if $f \in \mathcal{M}_{n,q}$ is squarefree,
$$
d_z(f) = \sum_{\lambda \vdash n} s_\lambda(\underbrace{1,...,1}_z) X^\lambda(f),
$$
where $X^\lambda(f)$ is a character of $S_n$ applied to the factorization type of $f$ (as opposed to the cycle types of $S_n$ as usual; see \cite{rodgers2018arithmetic} for a further explanation). As before \eqref{schur_eval} implies that this may be written
\begin{equation}\label{z_tothe_omega2}
d_z(f) = \sum_{\lambda \vdash n}\frac{\dim(\lambda)}{n!} \prod_{\square\in\lambda}(z+c(\square)) X^\lambda(f).
\end{equation}
But this equals just $z^\omega(f)$, and since both are for a fixed $f$ polynomials in $z$ agreeing in value for all integer $z$, we can conclude that they agree for all $z\in \mathbb{C}$. Hence for $f$ squarefree, and any $z \in \mathbb{C}$, we see that \eqref{z_tothe_omega2} holds. Furthermore, for $f$ that is not squarefree, we have
\begin{equation*}
d_z(f) = \sum_{\lambda \vdash n}\frac{\dim(\lambda)}{n!} \prod_{\square\in\lambda}(z+c(\square)) X^\lambda(f) + c(f),
\end{equation*}
where $c(f)$ is some function (depending on $z$) supported on elements $f$ of $\mathcal{M}_{n,q}$ that are not squarefree.

Now we note that Theorem \ref{d_z_variance} follows directly from Theorem 3.1 of \cite{rodgers2018arithmetic}.
\end{proof}
\begin{remark}
The coefficients of $X^{\lambda}$ in \eqref{z_tothe_omega2} are called the Fourier coefficients of $d_z$, and are uniquely determined once $q$ is large enough with respect to $n$ (namely $q \ge n$). A feature of $d_z$ for $z$ not an integer is that all its Fourier coefficients are non-zero, while for integer $z$ only polynomially many (in $n$) coefficients are non-zero.
\end{remark}

\subsection{Limiting distributions: Propositions \ref{T_limit} and \ref{Thoma_limit}}\label{scalinglimit_proof}

It is natural to treat Proposition \ref{Thoma_limit} first and then Proposition \ref{T_limit}. In order to make claims regarding limits more transparent, in this section we write $\mathbb{P}(\alpha_1^{(z)} \leq s)$ rather than the abbreviation $F_z(s)$.
\begin{proof}[Proof of Prop. \ref{Thoma_limit}]
As
\begin{equation*}
\frac{(|z|^2)_n}{n!} = \frac{n^{|z|^2-1}}{\Gamma(|z|^2)} + o(n^{|z|^2-1}),
\end{equation*}
for $z\neq 0$ and (using the continuity of $\mathbb{P}(\alpha_1^{(z)} \leq s)$ in $s$),
\begin{equation*}
\mathbb{P}_z^{(n)}(\lambda_1 \leq N-1) = \mathbb{P}_z^{(n)}\Big(\frac{\lambda_1}{n} \leq \frac{N-1}{n}\Big) = \mathbb{P}(\alpha_1^{(z)} \leq s) + o(1),
\end{equation*}
the result follows.
\end{proof}

\begin{proof}[Proof of Prop. \ref{T_limit}]
Note that we have for large $j$,
\begin{equation*}
\frac{(1/4)_j}{j!} = \frac{j^{-3/4}}{\Gamma(1/4)} + o_{j\rightarrow\infty}(j^{-3/4}),
\end{equation*}
and $\frac{(1/4)_j}{j!} = O(j^{-3/4})$ in general for $j \ge 1$.

Likewise for large $j$,
\begin{equation*}
\mathbb{P}_{1/2}^{(j)}(\lambda_1 \leq N) = \mathbb{P}(\alpha_1 \leq N/j) + o_{j\rightarrow\infty}(1).
\end{equation*}
This convergence is uniform as $N$ varies (because $\mathbb{P}_{1/2}^{(j)}(\lambda_1 \leq N) = 1 = \mathbb{P}(\alpha_1 \leq N/j)$ for $N \geq j$ and otherwise $N/j$ lies in a compact interval). Furthermore, we have $\mathbb{P}_{1/2}^{(j)}(\lambda_1 \leq N-1) = O(1)$ in general.

Fix an arbitrary $\epsilon \in (0,1)$, and decompose
\begin{align}
\label{T_decomp}
\notag T(n;N) =& \Big( \sum_{\epsilon n \leq j \leq (1-\epsilon)n} + \sum_{\substack{j < \epsilon n, \, \text{or} \\ j > (1-\epsilon) n}}\Big)  \frac{(1/4)_j (1/4)_{n-j}}{j! (n-j)!} \mathbb{P}_{1/2}^{(j)}(\lambda_1 \leq N-1) \mathbb{P}_{1/2}^{(n-j)}(\lambda_1 \leq N)\\
\notag =& \frac{1}{\Gamma(1/4)^2} \sum_{\epsilon n \leq j \leq (1-\epsilon) n} j^{-3/4} (n-j)^{-3/4} \mathbb{P}(\alpha_1 \leq \frac{N-1}{j}) \mathbb{P}(\alpha_1 \leq \frac{N}{n-j}) \\
&+ o_{n\rightarrow\infty}\Big(\sum_{\epsilon n \leq j \leq (1-\epsilon) n} j^{-3/4} (n-j)^{-3/4}\Big)+ O\Big(\sum_{\substack{0 < j < \epsilon n, \, \text{or} \\ n >j > (1-\epsilon) n}} j^{-3/4}(n-j)^{-3/4}\Big) +O(n^{-3/4}),
\end{align}
where above the rate at which the error term $o_{n\rightarrow\infty}(\cdots)$ tends to zero as $n\rightarrow\infty$ depends upon $\epsilon$, but the constants of other error terms are absolute, with the last error term $O(n^{-3/4})$ coming from the terms $j=0$ and $j=n$ in the sum. If $N/n \rightarrow s$ as $n\to \infty$, then
\begin{equation*}
\mathbb{P}(\alpha_1 \leq \frac{N-1}{j}) = \mathbb{P}(\alpha_1 \leq \frac{(N-1)/n}{j/n}) = \mathbb{P}(\alpha_1 \leq \frac{s}{j/n}) + o_{n\rightarrow\infty}(1),
\end{equation*}
uniformly for $1 \leq j \leq  n$. (The reason for uniformity is again due to compactness.) Of course we have 
\begin{equation*}
\mathbb{P}(\alpha_1 \leq \frac{N}{n-j}) = \mathbb{P}(\alpha_1 \leq \frac{s}{1-j/n}) + o_{n\rightarrow\infty}(1)
\end{equation*}
also. 

Moreover,
\begin{equation*}
\sum_{\substack{0 < j < \epsilon n, \, \text{or} \\ n>j > (1-\epsilon) n}} j^{-3/4}(n-j)^{-3/4} = O(\epsilon^{1/4} n^{-1/2}),
\end{equation*}
and
\begin{equation*}
\sum_{0 < j < n} j^{-3/4} (n-j)^{-3/4} = O(n^{-1/2})
\end{equation*}
Hence the reader should check that we can simplify \eqref{T_decomp} to
\begin{align}
\notag (\ref{T_decomp}) =& \frac{1}{\Gamma(1/4)^2} \frac{1}{\sqrt{n}} \sum_{\epsilon n \leq j \leq (1-\epsilon) n} \frac{1}{n} (j/n)^{-3/4} (1-j/n)^{-3/4} \mathbb{P} (\alpha_1 \leq \frac{s}{j/n}) \mathbb{P}(\alpha_1 \leq \frac{s}{1-j/n}) \\
& + O(\epsilon^{1/4}n^{-1/2}) + o_{n\rightarrow\infty}(n^{-1/2}) \\
\notag = &\frac{1}{\Gamma(1/4)^2} \frac{1}{\sqrt{n}} \int_\epsilon^{1-\epsilon} t^{-3/4} (1-t)^{-3/4} \mathbb{P}(\alpha_1 \leq \frac{s}{t}) \mathbb{P}(\alpha_1 \leq \frac{s}{1-t})\, dt\\
& + O(\epsilon^{1/4} n^{-1/2}) + o_{n\rightarrow\infty}(n^{-1/2}),
\end{align}
with the second line following because the sum in the previous line is a Riemann sum. Completing the integral from the interval $[\epsilon, 1-\epsilon]$ to $[0,1]$ adds only an error of $O(\epsilon^{1/4} n^{-1/2})$. Hence
\begin{align*}
T(n;N) =& \frac{1}{\Gamma(1/4)^2 \sqrt{n}} \int_0^1 t^{-3/4} (1-t)^{-3/4} \mathbb{P}(\alpha_1 \leq \frac{s}{t}) \mathbb{P}(\alpha_1 \leq \frac{s}{1-t}) \, dt + O(\epsilon^{1/4} n^{-1/2}) + o(n^{-1/2}) \\
=& \frac{1}{\sqrt{\pi n}} \Big(\int_0^1 \mathbb{E} \mathbf{1}( 1 - \frac{s}{\alpha_1} \leq t \leq \frac{s}{\alpha_1'} ) \sqrt{\pi} \Gamma(1/4)^{-2} t^{-3/4} (1-t)^{-3/4}\, dt + O(\epsilon^{1/4})+o(1)\Big) \\
=& \frac{1}{\sqrt{\pi n}} \Big(\mathbb{P}\Big(1- \frac{s}{\alpha_1} \leq Y \leq \frac{s}{\alpha_1'}\Big) + O(\epsilon^{1/4})+ o(1)\Big),
\end{align*}
where $\alpha_1'$ is an independent copy of $\alpha_1$ and $Y \sim \mathrm{Beta}(1/4,1/4)$. As $\epsilon$ is arbitrary this establishes the claim.
\end{proof}

\subsection{From $\mathbb{F}_q[T]$ to $\mathbb{Z}$: sums of squares}\label{sec:making_integer_conjectures}
Theorem \ref{main_thm_limit} suggests a conjecture for the integers regarding the number of elements of $S$ that lie in a short interval. Naively one might think it will suggest a conjecture regarding the quantity
\begin{equation}
\label{prob_variance_prelim}
\frac{1}{X}\int_X^{2X}\Big( \sum_{x \leq n \leq x+H} b(n) - M_{X,H}\Big)^2\, dx,
\end{equation}
where $H = X^{\delta}$ with $\delta \in (0,1)$ and
\begin{equation}
\label{prob_mean_prelim}
M_{X,H} = \frac{1}{X} \int_{X}^{2X} \sum_{x < n \leq x+H} b(n)\, dx \sim K \frac{H}{\sqrt{\log X}}.
\end{equation}
Here \eqref{prob_variance_prelim} is the \emph{probabilistic variance} of $\sum_{x \leq n \leq x+H} b(n)$ and \eqref{prob_mean_prelim} is the \emph{probabilistic mean}. This is not exactly the right quantity to look at, owing to the fact that $b(n)$ on average behaves like $1/\sqrt{\log n}$, and the slow change of this function means that the variance in \eqref{prob_variance_prelim} will be much larger than we would like. Indeed, even the probabilistic variance of $\sum_{x\leq n \leq x+H} 1/\sqrt{\log n}$ is quite large owing to this change; the probabilistic variance of this sum is
$$
\frac{1}{X} \int_X^{2X} \Big( \sum_{x \leq n \leq x+H} \frac{1}{\sqrt{\log n}} - \frac{1}{X} \int_X^{2X} \sum_{t \leq n \leq t+H} \frac{1}{\sqrt{\log n}}\, dt\Big)^2\, dx,
$$
and with a little work one may see that this is at least of order $H^2/(\log X)^3$.

Thus instead of \eqref{prob_variance_prelim}, we consider a variant in which $M_{X,H}$ has been replaced by a better approximation to $\sum_{x< n \leq x+H} b(n)$ which changes with $x$; this approximation is given in terms of an integral of $L$-functions.

Define the function $F(s)$ for $\Re s > 1$ by
\begin{equation}
\label{eq:F_def}
F(s) = \sum_{n=1}^\infty \frac{b(n)}{n^s}.
\end{equation}
Using the fact that $n$ is an element of $S$ if and only if $n$ can be written in the form $2^\alpha \mu \nu^2$, for $\mu$ a product of primes congruent to $1$ modulo $4$ and $\nu$ a product of primes congruent to $3$ modulo $4$, it may be seen that for $\Re s > 1$,
\begin{align}
\label{eq:F_prod}
F(s) =& \frac{1}{1-2^{-s}} \prod_{q \equiv 1 \bmod 4} \frac{1}{1-q^{-s}} \prod_{r \equiv 3 \bmod 4} \frac{1}{1-r^{-2s}} \\
=&\Big( \frac{\zeta(s) L(s,\chi_4)}{1-2^{-s}}\Big)^{1/2} \prod_{k=1}^\infty \Big( \frac{\zeta(2^k s)}{L(2^k s, \chi_4)} (1-2^{-2^k s})\Big)^{1/2^{k+1}},
\end{align}
where $\chi_4$ is the non-principal character modulo $4$. The first Euler product here dates at least back to Landau \cite{land}, while the second factorization has in effect been derived many times (see e.g. \cite{shanks1964the,flajolet1996zeta}).

The second representation allows one to analytically continue $F(s)$ to the cut disc $\mathcal{E} = \{s: |s-1| < 1/2\} \setminus \{s: \Im s = 0, \Re s \leq 1\}$: note that in this region, because neither $\zeta(s)$ nor $L(s,\chi)$ have low-lying zeros inside of it (see \cite{lmfdb} for a list of zeros), we can write 
\begin{equation}
\label{eq:F_factor}
F(s) = (s-1)^{-1/2} f(s),
\end{equation}
where $f(s)$ is an analytic function and where the principal branch of the function $(s-1)^{-1/2}$ is taken.

Assuming the Riemann Hypothesis for $\zeta(s)$ and $L(s,\chi_4)$, we show in Theorem \ref{thm:B_approx} that for any $\epsilon > 0$,
$$
B(x) = \overline{B}(x) + O_{\epsilon}(x^{1/2+\epsilon}), \quad \textrm{where}\quad \overline{B}(x)= \frac{1}{\pi} \int_{1/2}^1 \frac{x^s}{(1-s)^{1/2} s} f(s) \, ds.
$$
Thus we approximate $B(x+H)-B(x)$ (the number of elements of $S$ in a short interval $(x,x+H]$) by
\begin{equation}
I(x;H):= \overline{B}(x+H)-\overline{B}(x).
\end{equation}

We will consider variance defined in the following sense:
\begin{equation}
V_b(X;H):= \frac{1}{X}\int_X^{2X} (B(x+H)-B(x) - I(x;H))^2\, dx,
\end{equation}
Ramachandra \cite{ramachandra1976some} investigated a quantity equivalent to this one and showed that there is some cancellation over the trivial bound of $H^2/\log X$; namely
\begin{equation}
V_b(X;H) = O(H^2 \exp[-(\log X)^{1/6}]),
\end{equation}
for $H > X^{1/6+\epsilon}$. Under density hypotheses for the zeros of $\zeta(s)$ and $L(s,\chi_4)$ (see \cite[Eq. (6)]{ramachandra1976some}) this is improved to the more complete range $H > X^\epsilon$.

Motivated by Theorem \ref{main_thm_Variance}, we believe that

\begin{conj}
\label{b_variance_conj}
Fix $\delta \in (0,1)$. As $X\rightarrow\infty$ with $H = X^\delta$, we have
\begin{equation}
V_b(X;H) = \Big(K\, G(1-\delta) + o(1)\Big)\frac{H}{\sqrt{\log X}},
\end{equation}
for $K$ as in \eqref{K_def} and $G(s)$ as in \eqref{G_def}.
\end{conj}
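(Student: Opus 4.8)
The plan is to justify Conjecture \ref{b_variance_conj} by transporting Theorem \ref{main_thm_limit} across the standard dictionary between $\FF_q[T]$ and $\ZZ$, and then to indicate what a conditional proof in the integer setting would require. First I would fix the translation of parameters. In $\FF_q[T]$ a short interval $I(A;h)$ has length $q^{h+1}$ inside an ambient set $\mathcal{M}_{n,q}$ of size $q^n$; comparing logarithms gives the correspondences $q^{h+1}\leftrightarrow H$ and $n\log q\leftrightarrow\log X$, hence $\delta=h/n\leftrightarrow\log H/\log X$. With $H=X^\delta$ this is exactly the quantity $\delta_H$ plotted in Figure \ref{fig:data_and_prediction_short_interval}, and the ``number of zeros'' $N=n-h-1$ in Theorem \ref{main_thm_Variance} satisfies $N/n\to 1-\delta$, which is the argument $s=1-\delta$ of the function $G$ in Proposition \ref{T_limit}.

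Next I would transport the size of the variance. By \eqref{Gorodetsky_asymp}, Stirling's formula for $\binom{n-1/2}{n}$, and $K_q=1+O(1/q)$, the mean $\EE_{B\in\mathcal{M}_{n,q}}(\nu_b(B;h))$ is asymptotic to $q^{h+1}/\sqrt{\pi n}$; thus Theorem \ref{main_thm_limit} asserts that the function field variance equals $(1+o(1))\,G(1-\delta)$ times this mean. Since the integer analogue of the mean is $M_{X,H}\sim K\,H/\sqrt{\log X}$ by \eqref{prob_mean_prelim}, applying the same multiplicative factor yields the predicted $V_b(X;H)\sim K\,G(1-\delta)\,H/\sqrt{\log X}$. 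The factor of $K$ in the conjecture is forced precisely because $K_q\to1$ over $\FF_q[T]$ whereas the Landau--Ramanujan constant $K\neq1$ over $\ZZ$.

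The one genuinely integer-specific ingredient is the choice of centering in the definition of $V_b(X;H)$. In $\FF_q[T]$ the mean $\EE_{B}(\nu_b(B;h))$ is constant over the whole of $\mathcal{M}_{n,q}$, so the variance \eqref{des} is automatically detrended; over $\ZZ$ the density $1/\sqrt{\log n}$ of $S$ drifts, and subtracting the flat constant $M_{X,H}$ as in \eqref{prob_variance_prelim} introduces a spurious contribution of size $\gg H^2/(\log X)^3$ that has no function field counterpart. The correct analogue of function field detrending is to subtract the smooth proxy $I(x;H)=\overline{B}(x+H)-\overline{B}(x)$ coming from the analytic continuation $F(s)=(s-1)^{-1/2}f(s)$ in \eqref{eq:F_factor}: the term $I(x;H)$ isolates the contribution of the branch point of $F$ at $s=1$, which is the integer counterpart of the trivial zero and pole that are stripped away in Proposition \ref{thmb}. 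What is left, $B(x+H)-B(x)-I(x;H)$, should then be an oscillatory sum over the nontrivial zeros of $\zeta(s)$ and $L(s,\chi_4)$, mirroring the function field sum over the zeros of $L(u,\chi)$ and $L(u,\chi\cdot\chi_2)$.

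Finally, a conditional proof would follow the heuristic template used for primes in short intervals (cf. \cite{soundararajan2007distribution}): assuming RH for $\zeta$ and $L(\cdot,\chi_4)$, expand $B(x+H)-B(x)-I(x;H)$ via a truncated explicit formula built from the factorization \eqref{eq:F_prod} of $F$ into half-integral powers of $\zeta$ and $L(\cdot,\chi_4)$, square, and average over $x\in[X,2X]$; the diagonal terms should --- after a Riemann-sum passage parallel to the proof of Proposition \ref{T_limit} --- reassemble into a random matrix integral over a product of two unitary groups whose sizes are proportional to $\log X$ in the ratios dictated by $\delta$, which Theorem \ref{RMT_to_zmeasures} with $z=z'=1/2$ evaluates to $K\,G(1-\delta)\,H/\sqrt{\log X}$. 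The main obstacle, and the reason this must remain a conjecture, is that there is no unconditional input on the joint statistics of the zeros of $\zeta$ and $L(\cdot,\chi_4)$ at the scale $H=X^\delta$ (equivalently, the relevant half-integral shifted moment of $\zeta\cdot L(\cdot,\chi_4)$): in the function field argument this role is played wholesale by Sawin's equidistribution theorem \cite{sawin2018equidistribution}, for which no arithmetic analogue is available. Compounding this, even a single short-interval integral of the fractional Euler product $F$ is beyond current reach without zero-density hypotheses (as in Ramachandra \cite{ramachandra1976some}), so controlling the off-diagonal terms and justifying the error term $o(1)$ uniformly in $\delta\in(0,1)$ is not presently feasible.
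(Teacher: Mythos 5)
This statement is a conjecture which the paper does not (and cannot currently) prove; it is motivated exactly as you argue, by transporting Theorem \ref{main_thm_limit} through the dictionary $q^{h+1}\leftrightarrow H$, $n\leftrightarrow \log X$ (so that the variance is $\approx G(1-\delta)$ times the mean, which over $\ZZ$ carries the Landau--Ramanujan constant $K$), together with the same observation that the naive centering $M_{X,H}$ must be replaced by the smooth approximation $I(x;H)$ built from $F(s)=(s-1)^{-1/2}f(s)$. Your heuristic, including the identification of Sawin's equidistribution theorem as the input with no integer analogue, matches the paper's own motivation in Section \ref{sec:making_integer_conjectures}, so there is nothing to correct.
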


Theorem \ref{G_positivity} thus suggests the perhaps more tractable conjecture that for $H = X^\delta$ with fixed $\delta \in (0,1),$
\begin{equation}
\frac{H}{\sqrt{\log X}} \ll V_b(X;H) \ll \frac{H}{\sqrt{\log X}},
\end{equation}
with implicit constants depending on $\delta$.

Returning to Figure \ref{fig:data_and_prediction_short_interval}, there for $X = 10^8$ we have plotted the numerical value of the points $(\delta, \frac{V_b(X;H)}{H/\sqrt{\log X}})$ for $\delta = \log(H)/\log X$ for various primes $H$, and in comparison have also plotted the curve $(\delta, K \, G(1-\delta))$. 

Replacing random short intervals with random sparse arithmetic progressions, it is also reasonable to believe in a variant of Conjecture \ref{b_variance_conj}. Some of the analytic difficulties which arise in defining $V_b(X;H)$ vanish in this context. For $X$ and $q$ positive integers, define
\begin{equation}\label{eq:vbxq}
\mathbb{V}_b(X,q) := \frac{1}{\phi(q)} \sum_{\substack{1\leq a \leq q  \\ (a,q)=1}} \Big( \sum_{\substack{n \equiv a \bmod q \\ n \leq X}} b(n) - \frac{1}{\phi(q)} \sum_{\substack{(n,q)=1 \\ n\leq X}} b(n)\Big)^2.
\end{equation}
Note that in contrast to the definition of $V_b(X;H)$, the quantity $\mathbb{V}_b(X,q)$ genuinely is the probabilistic variance of counts of elements of the set $S$ that lie in a random arithmetic progression; we are able to consider the probabilistic variance because the density of the set $S$ does not change as we vary over arithmetic progressions modulo the same number. One may think in this set up of $X/q$, roughly the number of elements in each such arithmetic progression, as playing the role of $H$ above. For the sake of simplicity we make a conjecture only for prime moduli.
\begin{conj}
\label{b_variance_conj_q}
Fix $\delta \in (0,1)$. As $X\rightarrow\infty$ choose primes $p$ such that $X/p = X^{\delta+o(1)}$. Then
\begin{equation}
\mathbb{V}_b(X,p) = \Big(K\, G(1-\delta) + o(1)\Big) \frac{X/p}{\sqrt{\log X}},
\end{equation}
for $K$ as in \eqref{K_def} and $G(s)$ as in \eqref{G_def}.
\end{conj}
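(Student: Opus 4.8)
This final statement is a conjecture rather than a theorem, so what can be offered is a conditional derivation that runs parallel to the proof of Theorem \ref{main_thm_Variance}, together with an indication of the one ingredient that is genuinely missing over the integers (the analogue of Sawin's equidistribution theorem). The plan has four stages: (i) reduce $\mathbb{V}_b(X,p)$ to a second moment of twisted sums over Dirichlet characters mod $p$; (ii) factor the twisted Dirichlet series of $b$ and, under GRH, express each twisted sum through the low-lying zeros of $L(s,\chi)$ and $L(s,\chi\chi_4)$; (iii) invoke a large-family equidistribution hypothesis to replace the average over $\chi$ by an integral over $U(N-1)\times U(N)$; (iv) evaluate that integral with Theorem \ref{RMT_to_zmeasures} at $z=z'=\tfrac12$ and pass to the limit with Proposition \ref{T_limit}.

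For (i), since $p$ is prime, orthogonality of the characters mod $p$ gives the clean identity
\[
\mathbb{V}_b(X,p) \;=\; \frac{1}{(p-1)^2}\sum_{\substack{\chi \bmod p \\ \chi\neq \chi_0}} \Big|\sum_{\substack{n\leq X\\ p\nmid n}} b(n)\chi(n)\Big|^2,
\]
with no residual main term beyond the removal of $\chi_0$ --- this is exactly the simplification the paper notes is available for sparse progressions but not for $V_b(X;H)$. The problem thus becomes the evaluation of the mean square of $S_b(X,\chi):=\sum_{n\le X}b(n)\chi(n)$ over nontrivial $\chi$ mod $p$. For (ii), the same arithmetic of $S$ that produced \eqref{eq:F_prod} shows that $\sum_n b(n)\chi(n)n^{-s} = (L(s,\chi)L(s,\chi\chi_4))^{1/2}\,G_\chi(s)$, with $G_\chi(s)$ an Euler product that is absolutely and uniformly (in $\chi$) convergent for $\Re s>\tfrac12$; this is the integer incarnation of Lemmas \ref{snbchivia}--\ref{lem:snbform}. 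A Perron/contour-shift argument, using GRH for $L(s,\chi)$ and $L(s,\chi\chi_4)$ to push the contour to the critical line, should then show that $S_b(X,\chi)$ is, up to an error negligible after averaging over $\chi$, governed by a short "square root of a characteristic polynomial" sum in the unitarised zeros of $L(s,\chi)$ and $L(s,\chi\chi_4)$. Under the dictionary $n\leftrightarrow\log X$ for the "length" and $N\leftrightarrow\log p$ for the "matrix size", the hypothesis $X/p=X^{\delta+o(1)}$ becomes $N/n\to 1-\delta$, precisely the regime of Proposition \ref{T_limit}.

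Stages (iii) and (iv) are then formal given (ii). Granting that the zeros of $L(s,\chi)$ and $L(s,\chi\chi_4)$, as $\chi$ ranges over characters mod $p$, jointly equidistribute like the eigenangles of an independent pair from $U(N-1)\times U(N)$ --- the integer counterpart of Theorem \ref{UtimesU_equi} --- the computation of Corollary \ref{sqrt_char_to_matrix} turns the $\chi$-average of $|S_b(X,\chi)|^2$ into $\sum_{j+k=n}\int_{U(N-1)}|A_j(g_1)|^2\,dg_1\int_{U(N)}|A_k(g_2)|^2\,dg_2 = T(n;N)$ once the factor coming from $G_\chi$ is folded into the normalisation. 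Theorem \ref{RMT_to_zmeasures} with $z=z'=\tfrac12$ supplies that last evaluation, and Proposition \ref{T_limit} with $s=1-\delta$ gives $T(n;N)=\tfrac{1}{\sqrt{\pi n}}G(1-\delta)+o(n^{-1/2})$. Reinstating constants --- the factor $K$ entering through Landau's mean $\tfrac{1}{p-1}\sum_{n\le X}b(n)\sim K\,\tfrac{X/p}{\sqrt{\log X}}$, and the $z$-measure factor $G(1-\delta)$ entering as the resulting ratio of variance to mean --- produces exactly $\big(K\,G(1-\delta)+o(1)\big)\tfrac{X/p}{\sqrt{\log X}}$.

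The hard part, and the reason this remains a conjecture, is stage (iii): there is no unconditional equidistribution result for the low-lying zeros of this family of Dirichlet $L$-functions playing the role that Sawin's theorem plays for the $T^{n-h}$ family, and even granting GRH one would still have to justify exchanging the $\chi$-average with the contour integral and to control the moments of the $\sqrt{|L|}$-type quantities that appear --- in effect a joint Keating--Snaith / CFKRS-type conjecture for the pair $(L(s,\chi),L(s,\chi\chi_4))$. Two smaller wrinkles also demand care: $\chi\chi_4$ has conductor $4p$ rather than $p$, so the neat $U(N-1)$-versus-$U(N)$ asymmetry (which in the function field comes from the trivial zero of $L(u,\chi)$ at $u=1$ for even $\chi$) survives only as an $O(1)$ shift in the zero count that must be shown to be absorbed into $G_\chi$ and the $o(1)$; and the contribution of the prime $2$ and of non-squarefree $n$, where $b$ fails to be multiplicative in the convenient way, has to be placed inside $G_\chi$, exactly as in the proof of Lemma \ref{snbchivia}. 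I would expect steps (i), (ii) and (iv) to be routine modulo these inputs, with essentially all the difficulty concentrated in the equidistribution and moment hypotheses underlying stage (iii).
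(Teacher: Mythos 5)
This statement is a conjecture, and the paper offers no proof of it: it is put forward purely on the strength of the function-field theorem (Theorems \ref{main_thm_Variance} and \ref{main_thm_limit}, translated by the usual dictionary $X \leftrightarrow q^n$, $X/p \leftrightarrow q^{h+1}$, so that the variance is $G(1-\delta)$ times the Landau mean $K\,(X/p)/\sqrt{\log X}$) together with the numerical evidence of Figure \ref{fig:data_and_prediction_AP} and the large-$p$ correction of Appendix \ref{sec:appendix large p}. Your conditional derivation is therefore not something the paper attempts, but it faithfully reconstructs the motivation: step (i) is an exact orthogonality identity and is indeed the reason the progression variance $\mathbb{V}_b(X,p)$ is cleaner than $V_b(X;H)$; the dictionary $N/n \to 1-\delta$ and the final normalization (variance $=$ $G(1-\delta)$ times the mean, with $K$ entering only through the mean, since $b^2=b$) are exactly consistent with how Conjecture \ref{b_variance_conj_q} is extracted from Theorem \ref{main_thm_limit}; and you correctly locate the genuinely missing ingredient, namely an integer analogue of Theorem \ref{UtimesU_equi} (equivalently a joint moment/ratios-type hypothesis for the pair $L(s,\chi)$, $L(s,\chi\chi_4)$), which is precisely why this remains a conjecture. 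One analytic caveat in your stage (ii): even under GRH, ``pushing the contour to the critical line'' for $(L(s,\chi)L(s,\chi\chi_4))^{1/2}G_\chi(s)$ is delicate, because the square root has branch points at every critical zero, so the integrand is not meromorphic there; one must work in a cut region as in Appendix \ref{sec:B(x)} and in Ramachandra's treatment, and controlling the resulting integrals on average over $\chi$ is part of the unproven moment hypothesis rather than a routine step. With that understood, your proposal is a fair account of the heuristic underlying the conjecture, not a gap relative to anything the paper proves.
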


One may likewise conjecture that for $X/p = X^{\delta + o(1)}$ and fixed $\delta \in (0,1)$,
$$
\frac{X/p}{\sqrt{\log X}} \ll \mathbb{V}_b(X,p) \ll \frac{X/p}{\sqrt{\log X}}.
$$
Very recently an averaged version of the lower bound has been established for $\delta \in (0,1/2)$ by Mastrostefano \cite{mastrostefano2020lower}.

\begin{figure}[h]
  \includegraphics[width=\linewidth]{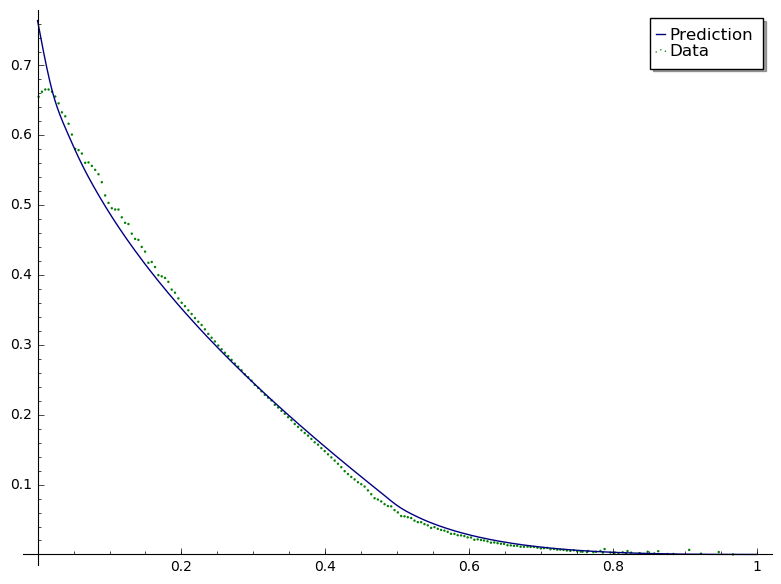} \caption{Numerically produced data compared to the prediction of Conjecture \ref{b_variance_conj_q} for variance in arithmetic progressions. For $X = 9\cdot 10^8$ and a selection of primes $p$, set $\delta \log(X/p)/\log(X)$; we plot the points $(\delta, \mathbb{V}_b(X,p)/((X/p)/\sqrt{\log X}))$ under the label \textbf{data}, and the curve $(\delta, K\, G(1-\delta))$ under \textbf{prediction}.}
\label{fig:data_and_prediction_AP}
\end{figure}

\begin{remark}
The reader may discern a bump in the data near $\delta=0$ in Figure~\ref{fig:data_and_prediction_AP}. This is explained by lower order terms which become negligible in the $X \to \infty$ limit. See Appendix \ref{sec:appendix large p} for a discussion.
\end{remark}

\subsection{From $\mathbb{F}_q[T]$ to $\mathbb{Z}$: divisor sums}\label{sec:making_integer_conjectures2}

Theorem \ref{d_z_limit} likewise suggests a conjecture over the integers for the arithmetic functions $d_z(n)$. For $z > 0$, we approximate
\begin{equation}
\label{eq:D_z_def}
D_z(x):= \sum_{n\leq x} d_z(n)
\end{equation}
by the function
\begin{equation}
\label{eq:D_z_approx}
\overline{D}_z(x):= \frac{1}{2\pi i} \int_{\mathcal{C}_{3/4}} \frac{x^s}{s} \zeta(s)^z\, ds,
\end{equation}
where $\mathcal{C}_{3/4}$ is a contour tracing out the cut circle $\{s:\, |s-1| = 3/4,\, s\neq 1/4\}$ in the counterclockwise direction, and the continuation of $\zeta(s)^z$ which is real for real values of $s$ is taken on this contour. (Here we are recalling \eqref{divisor_generating} that $\zeta(s)^z$ is the Dirichlet series for $d_z(n)$.) When $z$ is an integer the integral reduces to just a residue at $s=1$ and becomes $x$ multiplied by a polynomial in $\log x$, but otherwise $\overline{D}_z(x)$ is a more complicated expression, see \cite[Ch. 14.6]{ivic1985riemann} for an asymptotic expansion. We have $D_z(x) = \overline{D}_z(x) + O_{\epsilon}(x^{1/2+\epsilon})$ on the assumption of the Riemann Hypothesis; see Appendix \ref{sec:B(x)}.

Define
\begin{equation}
\Delta_z(x):= D_z(x) - \overline{D}_z(x),
\end{equation}
\begin{equation}
\Delta_z(x;H):= \Delta_z(x+H) - \Delta_z(x),
\end{equation}
and consider
\begin{equation}
V_{d_z}(X;H):= \frac{1}{X} \int_X^{2X} \Delta_z(x;H)^2\, dx.
\end{equation}

Where $z$ is an integer this quantity was investigated in \cite{keating2018sums}, who made a conjecture \cite[Conjecture 1.1]{keating2018sums} regarding its asymptotic value. On the basis of Theorem \ref{d_z_limit} we believe that conjecture generalizes in the following way:
\begin{conj}
\label{d_z_variance_conj}
Fix $z > 0$. For $\delta \in (0,1)$ fixed, and $H = X^\delta$,
\begin{equation}
V_{d_z}(X;H) = a_z \Big(\frac{\mathbb{P}(\alpha_1^{(z)} \leq 1-\delta)}{\Gamma(z^2)} + o(1)\Big) H (\log X)^{z^2-1},
\end{equation}
as $X\rightarrow\infty$, where
\begin{equation}
a_z = \lim_{s\rightarrow 1^+} (s-1)^{z^2} \sum_{n=1}^\infty \frac{d_z(n)^2}{n^s}.
\end{equation}
\end{conj}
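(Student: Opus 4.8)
The plan is to obtain Conjecture \ref{d_z_variance_conj} as the image of the function field theorem (Theorem \ref{d_z_limit}) under the standard $\FF_q[T]\leftrightarrow\ZZ$ dictionary, the single genuinely new input being the arithmetic constant $a_z$. Under this dictionary the size of the short interval $q^{h+1}$ corresponds to $H$, the degree $n$ corresponds to $\log X$, the ratio $\delta=h/n$ corresponds to $\delta=\log H/\log X$, and $N=n-h-1$ corresponds to $(1-\delta)\log X$. Theorem \ref{d_z_variance} expresses the function field variance, up to a negligible error, as $q^{h+1}$ times the random matrix integral $\int_{U(N-1)}|A_{n,(z)}(g)|^2\,dg=\tfrac{(z^2)_n}{n!}\mathbb{P}_z^{(n)}(\lambda_1\le N-1)$ of Theorem \ref{RMT_to_zmeasures}, whose $n,N\to\infty$ asymptotics (Proposition \ref{Thoma_limit}) is $\tfrac{n^{z^2-1}}{\Gamma(z^2)}\mathbb{P}(\alpha_1^{(z)}\le 1-\delta)$. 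Transferring this verbatim gives a ``universal'' prediction $\tfrac{\mathbb{P}(\alpha_1^{(z)}\le 1-\delta)}{\Gamma(z^2)}(\log X)^{z^2-1}$ per unit of interval length; restoring the factor $H$ and the arithmetic correction then yields the conjectured shape.

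The reason the integer statement carries the extra factor $a_z$ while Theorem \ref{d_z_limit} does not is that in $\FF_q[T]$ the Euler product $\prod_P(1-q^{-\deg P})^{z^2}\sum_{\ell\ge0}d_z(P^\ell)^2q^{-\ell\deg P}$ — the function field analogue of the second moment constant — tends to $1$ as $q\to\infty$, so it is invisible in the large $q$ limit; over $\ZZ$ the same Euler product is a genuine constant, namely $a_z=\lim_{s\to1^+}(s-1)^{z^2}\sum_n d_z(n)^2n^{-s}$ (note $d_z(p)^2=z^2$, so $\sum_n d_z(n)^2n^{-s}$ has a pole of order $z^2$ at $s=1$, matching $\zeta(s)^{z^2}$). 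This is exactly the normalization making $\sum_{n\le X}d_z(n)^2\sim\tfrac{a_z}{\Gamma(z^2)}X(\log X)^{z^2-1}$, and one can sanity-check the placement of $a_z$ by letting $\delta\to1$: then $\mathbb{P}(\alpha_1^{(z)}\le1)=1$ and the conjecture degenerates, up to the contribution of $\overline D_z$ in \eqref{eq:D_z_approx}, to this second-moment asymptotic, which pins the constant down uniquely.

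On the mechanism side, the integer analogue of the zero-counting argument proceeds through the explicit formula: on the Riemann Hypothesis for $\zeta(s)$, $\Delta_z(x;H)=D_z(x+H)-D_z(x)-\overline D_z(x+H)+\overline D_z(x)$ is (heuristically) an oscillatory sum over the nontrivial zeros $\rho$, and $\tfrac1X\int_X^{2X}\Delta_z(x;H)^2\,dx$ singles out correlations of zeros lying in a window of ``length'' $\sim(1-\delta)\log X$; under the GUE/random-matrix model for these zeros those correlations reproduce exactly the integral $\int_{U(N)}|A_{n,(z)}(g)|^2\,dg$ with $n\sim\log X$ and $N\sim(1-\delta)\log X$, and Theorem \ref{RMT_to_zmeasures} together with Proposition \ref{Thoma_limit} supplies the $z$-measure shape. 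Combining the universal part with the arithmetic factor $a_z$ gives the statement.

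The main obstacle — and the reason this remains a conjecture rather than a theorem — is that each ingredient is out of reach. A rigorous evaluation of $\tfrac1X\int\Delta_z(x;H)^2$ would require either a shifted-convolution formula for $d_z(n)d_z(n+k)$ for non-integer $z$ (such formulas are not even conjectured in closed form for general $z$, and for integer $z\ge2$ they are among the hardest open problems in the area), or, equivalently, control of correlations of zeros of $\zeta(s)$ far beyond pair correlation — in effect a transfer to $\ZZ$ of Sawin's equidistribution theorem, precisely the input available in $\FF_q[T]$ but not over the integers. The honest content of the proposal is therefore the analogy itself: the right-hand side of Conjecture \ref{d_z_variance_conj} is exactly what Theorem \ref{d_z_limit} predicts once the arithmetic constant $a_z$, fixed by the second moment of $d_z$, is inserted.
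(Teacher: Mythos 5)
The statement is a conjecture, and the paper offers no proof of it: it is motivated exactly as you motivate it, namely by transferring Theorem \ref{d_z_limit} (via Theorem \ref{d_z_variance} and Proposition \ref{Thoma_limit}) under the standard $\FF_q[T]\leftrightarrow\ZZ$ dictionary and inserting the arithmetic constant $a_z$, generalizing the integer-$z$ conjecture of Keating--Rodgers--Roditty-Gershon--Rudnick. Your additional heuristics (the Selberg--Delange second-moment normalization pinning down $a_z$, the explicit-formula/equidistribution mechanism, and the honest acknowledgment of why this remains out of reach) are consistent with, and essentially the same as, the paper's reasoning.
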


Likewise we may define
$$
\mathbb{V}_{d_z}(X,p) = \frac{1}{\phi(q)} \sum_{\substack{1\leq a \leq q  \\ (a,q)=1}} \Big( \sum_{\substack{n \equiv a \bmod q \\ n \leq X}} d_z(n) - \frac{1}{\phi(q)} \sum_{\substack{(n,q)=1 \\ n\leq X}} d_z(n)\Big)^2.
$$

\begin{conj}
\label{d_z_variance_conj_q}
Fix $z > 0$ and fix $\delta \in (0,1)$. As $X\rightarrow\infty$ choose primes $p$ such that $X/p = X^{\delta+o(1)}$. Then
\begin{equation}
\mathbb{V}_{d_z}(X,p) = a_z \Big(\frac{\mathbb{P}(\alpha_1^{(z)} \leq 1-\delta)}{\Gamma(z^2)} + o(1)\Big) (X/p) (\log X)^{z^2-1}
\end{equation}
as $X\rightarrow\infty$.
\end{conj}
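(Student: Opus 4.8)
The plan is to mirror the proof of Theorem~\ref{d_z_variance} step for step, replacing Sawin's equidistribution theorem by the standard (conjectural) random-matrix model for moments of Dirichlet $L$-functions, exactly as was done for integer $z$ in \cite{keating2018sums}. The first step is an exact identity: opening the square in the definition of $\mathbb{V}_{d_z}(X,p)$ and using orthogonality of the Dirichlet characters modulo the prime $p$, the principal character contributes precisely the subtracted mean, so that with $S_{d_z}(X,\chi):=\sum_{n\le X}d_z(n)\chi(n)$ one has
\[
\mathbb{V}_{d_z}(X,p)=\frac{1}{\phi(p)^{2}}\sum_{\chi\neq\chi_0\ (p)}\bigl|S_{d_z}(X,\chi)\bigr|^{2}.
\]
Since $p$ is prime every $\chi\neq\chi_0$ is primitive, so each $L(u,\chi)$ is entire of conductor $p$; this is the integer counterpart of Lemma~\ref{lemshortvar} and Proposition~\ref{thmb}, and it is where the arithmetic-progression formulation is cleaner than the short-interval one, the density of $d_z$ not drifting across residue classes so that no $\overline{D}_z$-type correction is needed.

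Next I would feed in the moment recipe. By Perron's formula $S_{d_z}(X,\chi)$ is governed by $\frac{1}{2\pi i}\int L(s,\chi)^{z}X^{s}\,\frac{ds}{s}$ on a contour near $s=1$, and after the usual approximate-functional-equation manipulations, averaging $|S_{d_z}(X,\chi)|^{2}$ over $\chi\bmod p$ is a fractional ($2z$-th) moment of the family $\{L(\cdot,\chi)\}_{\chi\ (p)}$. The CFKRS-type recipe predicts this moment to be an \emph{arithmetic factor} times a \emph{random-matrix factor}: the arithmetic factor is exactly $a_z=\lim_{s\to1^{+}}(s-1)^{z^{2}}\sum_{n}d_z(n)^{2}n^{-s}$ (the singular-series correction present over $\ZZ$ but trivial in the $q\to\infty$ limit over $\FF_q[T]$, which is why no such constant appears in Theorem~\ref{d_z_variance}), while the random-matrix factor is $\int_{U(N)}\bigl|[u^{n}]\det(1-ug)^{z}\bigr|^{2}dg$ under the dictionary $n\leftrightarrow\log X$ for the length of the Dirichlet polynomial and $N\leftrightarrow\log p$ for the log-conductor (the precise integer shift between $N$ and $N-1$ being immaterial in the limit). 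Since $X/p=X^{\delta+o(1)}$ forces $\log p\sim(1-\delta)\log X$, this puts $N/n\to 1-\delta$, and the recipe predicts
\[
\frac{1}{\phi(p)^{2}}\sum_{\chi\neq\chi_0}\bigl|S_{d_z}(X,\chi)\bigr|^{2}\;\sim\;a_z\cdot(X/p)\cdot\frac{(z^{2})_{n}}{n!}\,\mathbb{P}_z^{(n)}(\lambda_1\le N-1)\Big|_{\,n=\log X,\ N=\log p},
\]
i.e. $(X/p)\cdot a_z$ times exactly the random-matrix quantity computed in Theorem~\ref{d_z_variance}.

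The third step is then pure bookkeeping with results already in the paper: Theorem~\ref{RMT_to_zmeasures} (with $z'=z$) identifies the unitary integral with the $z$-measure probability, and Proposition~\ref{Thoma_limit} with $s=1-\delta$ gives $\frac{(z^{2})_{n}}{n!}\mathbb{P}_z^{(n)}(\lambda_1\le N-1)=\frac{n^{z^{2}-1}}{\Gamma(z^{2})}F_z(1-\delta)+o(n^{z^{2}-1})$; substituting $n=\log X$ and $F_z(1-\delta)=\mathbb{P}(\alpha_1^{(z)}\le 1-\delta)$ yields precisely $a_z\frac{\mathbb{P}(\alpha_1^{(z)}\le 1-\delta)}{\Gamma(z^{2})}(X/p)(\log X)^{z^{2}-1}$. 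An alternative derivation avoiding $L$-functions would expand the square in $\mathbb{V}_{d_z}(X,p)$ directly into additive correlations $\sum_{n\le X}d_z(n)d_z(n+pk)$, insert a Conrey--Gonek/Hardy--Littlewood-type conjecture for these, and sum over $k$; this recovers the same prediction and is the route analogous to what one would do for $V_{d_z}(X;H)$ in Conjecture~\ref{d_z_variance_conj}.

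The main obstacle is the second step: the passage from the character sum to random matrices is the moment recipe, heuristic even for the classical moments of $\zeta(s)$, and for non-integer $z$ one is predicting a \emph{fractional} moment of a family of $L$-functions, a regime where essentially nothing is known unconditionally---already the size of $\int_0^T|\zeta(\tfrac12+it)|^{2z}\,dt$ for $z>1$ is open. A rigorous treatment would require at minimum strong $2z$-th moment bounds and subconvexity for $L(s,\chi)^{z}$ uniform in the conductor, which is far out of reach; short of this, the honest status of Conjecture~\ref{d_z_variance_conj_q} is that it is what Theorem~\ref{d_z_variance} plus the standard recipe predict. A secondary point is uniformity: Proposition~\ref{Thoma_limit} requires $N/n\to s\in(0,1)$ strictly, so the transitions at $\delta\in\{0,1\}$ need separate care, and for $\delta$ near $0$ there are genuine lower-order terms (responsible for the ``bump'' noted near the left edge of Figure~\ref{fig:data_and_prediction_AP}) that vanish only in the $X\to\infty$ limit.
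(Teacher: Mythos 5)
This statement is a conjecture: the paper gives no proof of it, only the motivation that it is the integer analogue of Theorem \ref{d_z_limit} with the arithmetic constant $a_z$ inserted, generalizing \cite[Conjecture 1.1]{keating2018sums}. Your derivation — the exact orthogonality identity reducing $\mathbb{V}_{d_z}(X,p)$ to $\phi(p)^{-2}\sum_{\chi\neq\chi_0}|S_{d_z}(X,\chi)|^2$, then the CFKRS-type recipe with dictionary $n\leftrightarrow\log X$, $N\leftrightarrow\log p$ feeding into Theorem \ref{RMT_to_zmeasures} and Proposition \ref{Thoma_limit} — is exactly the heuristic underlying the paper's conjecture, and you are right, and appropriately explicit, that the passage through fractional moments of the Dirichlet $L$-function family is not something that can currently be made rigorous, so this is a justification of the conjecture rather than a proof.
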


For work towards these conjectures for integer $z$, see for example the recent works \cite{ivic1985riemann, lester2016variance, rodgers2018variance, harper2017lower, de2020major, bettin2020averages}.

\begin{appendix}

\section{More on $z$-measures: scaling limits, positivity, and open problems}\label{sec:Further}

\subsection{Scaling limits: on Theorem \ref{thm:z_scalinglimit}}

We now turn to a proof of Theorem \ref{thm:z_scalinglimit}. The large part of this theorem has explicitly appeared in the literature before: for $z \in \mathbb{C}\setminus \mathbb{Z}_{\leq 0}$ and $\lambda \vdash n$ chosen according to parameters $z$ and $z'=\overline{z}$, the fact that
$$
\frac{\lambda_1}{n} \rightarrow \alpha_1^{(z)}
$$
in distribution is a consequence of \cite[Thm. 1.6]{borodin2005z}.

On the other hand this does not guarantee the continuity of the function $F_z(x)$ in \eqref{CDF_defn}. We establish continuity by breaking into two cases. Having established the continuity of $F_z(x)$, this will imply \eqref{z_limiting} by the Portmanteau theorem. (We do not establish continuity for $z =1$ so this does not work in that case, but for $z=1$ the convergence in \eqref{z_limiting} is obvious -- see Remark \ref{z_remark}.)

The separate cases we consider are $z \in \mathbb{Z}_{\geq 2}$ and $z \in \mathbb{C}\setminus \mathbb{Z}$. Clearly together these cases cover $z \in \mathbb{C}\setminus \mathbb{Z}_{\leq 1}$. In the terminology of \cite{borodin2005z}, $z \in \mathbb{C}\setminus \mathbb{Z}$ induces $z$-measures in the \emph{principal series}, while $z \in \mathbb{Z}_{\ge 2}$ induces $z$-measures in the \emph{degenerate series}.

\subsubsection{$z \in \mathbb{Z}_{\geq 2}$ and relation to the $\gamma_k$ integral}

We consider elements of $\mathbb{Z}_{\geq 2}$ first. We write $k\in \mathbb{Z}_{\geq 2}$ instead of $z$ to emphasize that these are integers, and our goal in this section is to prove the continuity of $F_k(x)$ for all $x\in \mathbb{R}$. 

Our main tool will be to show that the limiting cumulative distribution functions $F_k(c)$ are related to the integral defined by \cite[(1.12)]{keating2018sums}:
\begin{equation}
\label{gamma_integral}
\gamma_k(c) := \frac{1}{k! G(k+1)^2} \int_{[0,1]^k} \delta_c(w_1+\ldots + w_k) \Delta(w)^2\, d^k w,
\end{equation}
where $G$ is the Barnes $G$-function with $G(k+1) = \prod_{j=1}^{k-1} j!$ for integer $k$, $\delta_c(w) := \delta(w-c)$ is the delta distribution translated by $c$, and $\Delta(w):= \prod_{i < j} (w_i - w_j)$ is the Vandermonde determinant.
\begin{proposition}
\label{prop:integral_rep}
For $k \in \mathbb{Z}_{\geq 2}$,
\begin{equation}
\label{F_k_to_gamma}
F_k(s) = (k^2-1)! s^{k^2-1} \gamma_k(s^{-1}), \quad \textrm{for}\; s\in (0,1).
\end{equation}
\end{proposition}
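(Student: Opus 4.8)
The plan is to recognize one unitary matrix integral as admitting two asymptotic descriptions as $n,N\to\infty$ with $N/n\to s$ — one via the $z$-measure results of this paper, one via \cite{keating2018sums} — and to equate them. Since both sides of the claimed identity are continuous in $s$ on $(0,1)$ (the left by Theorem \ref{thm:z_scalinglimit}, valid as $k\ge 2$; the right because $\gamma_k$ is a continuous, indeed piecewise polynomial, function on $[0,k]$, vanishing at the endpoint $k$), it suffices to establish \eqref{F_k_to_gamma} for $s\in(0,1)\setminus\{1/k\}$.

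Write $A_{n,(k)}(g):=[u^n]\det(1-ug)^k$. By Theorem \ref{RMT_to_zmeasures} with $z=z'=k$,
$$
\int_{U(N)}|A_{n,(k)}(g)|^2\,dg=\frac{(k^2)_n}{n!}\,\mathbb{P}_k^{(n)}(\lambda_1\le N).
$$
Continuity of $F_k$ makes the passage from $\lambda_1\le N$ to $\lambda_1\le N-1$ immaterial in the limit, so Proposition \ref{Thoma_limit} (valid since $k\ne 1$, with $\Gamma(k^2)=(k^2-1)!$) gives, as $N/n\to s\in(0,1)$,
$$
\int_{U(N)}|A_{n,(k)}(g)|^2\,dg=\frac{F_k(s)}{(k^2-1)!}\,n^{k^2-1}+o(n^{k^2-1}).
$$

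On the other hand, this integral is precisely the random-matrix model for the variance of $\nu_{d_k}$, and \cite{keating2018sums} evaluates it explicitly and identifies its scaling limit with the integral $\gamma_k$ of \eqref{gamma_integral}: for $n/N\to c\in(0,k)$ one has $\int_{U(N)}|A_{n,(k)}(g)|^2\,dg=\gamma_k(c)\,N^{k^2-1}+o(N^{k^2-1})$. For $s\in(1/k,1)$ we take $c=1/s\in(1,k)$; since $N=sn(1+o(1))$ the right-hand side equals $s^{k^2-1}\gamma_k(1/s)\,n^{k^2-1}+o(n^{k^2-1})$, and comparing with the previous display and dividing by $n^{k^2-1}$ yields $F_k(s)=(k^2-1)!\,s^{k^2-1}\gamma_k(1/s)$. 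For $s\in(0,1/k)$ both sides vanish: $\gamma_k(c)=0$ for $c>k$ since $w_1+\cdots+w_k\le k$ on $[0,1]^k$, while for such $s$ the integral itself is identically zero once $n>Nk=\deg_u\det(1-ug)^k$, whence $F_k(s)=0$ by the two displays above. This proves the proposition.

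The one point I expect to require care is the interface with \cite{keating2018sums}: confirming that their theorem — stated there for $\Var(\nu_{d_k})$, equivalently for a piecewise polynomial in $n$ — translates exactly into the asymptotic $\int_{U(N)}|A_{n,(k)}(g)|^2\,dg\sim\gamma_k(n/N)\,N^{k^2-1}$, with this normalization and over the full range $n/N\to c\in(0,k)$. Once that identification is pinned down, the remainder is routine bookkeeping with the error terms and the change of variables $c=1/s$, $N\sim sn$.
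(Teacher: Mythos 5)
Your core computation is the same as the paper's -- evaluate $\int_{U(N)}|A_{n,(k)}(g)|^2\,dg$ once via Theorem \ref{RMT_to_zmeasures} (giving $\tfrac{(k^2)_n}{n!}\mathbb{P}_k^{(n)}(\lambda_1\le N)$) and once via \cite{keating2018sums}, then compare as $N/n\to s$. Your worry about the interface is unfounded: the paper cites this exactly, the integral is $I_k(n;N)$ in the notation of (1.27) of \cite{keating2018sums}, and Theorem 1.5 there gives $I_k(n;N)=\gamma_k(n/N)N^{k^2-1}+O_k(N^{k^2-2})$, valid for all $n,N$ since both sides vanish outside $0\le n\le kN$.

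However, as written your argument is circular within this paper's logical structure. You invoke the continuity of $F_k$ from Theorem \ref{thm:z_scalinglimit} and then Proposition \ref{Thoma_limit}; but for integer $k\ge 2$ (the degenerate series) the continuity statement in Theorem \ref{thm:z_scalinglimit} is proved \emph{by means of} Proposition \ref{prop:integral_rep}, and the proof of Proposition \ref{Thoma_limit} explicitly uses that continuity. What is available independently (from \cite[Thm.~1.6]{borodin2005z}) is only convergence in distribution of $\lambda_1/n$, i.e.\ convergence of the CDFs at continuity points of $F_k$, which are not a priori all of $(0,1)$. The paper avoids this by never passing through the scaling-limit statements: from the exact identity $\mathbb{P}_k^{(n)}(\lambda_1\le N)=\tfrac{n!}{(k^2)_n}I_k(n;N)$, Theorem 1.5 of \cite{keating2018sums}, and $(k^2)_n/n!=n^{k^2-1}/(k^2-1)!+O(n^{k^2-2})$, one gets
\begin{equation}
\mathbb{P}_k^{(n)}\Big(\frac{\lambda_1}{n}\le \frac{N}{n}\Big)=(k^2-1)!\,(N/n)^{k^2-1}\gamma_k(n/N)+O(1/\min(n,N)),
\end{equation}
so with $N=\lfloor sn\rfloor$ the limit exists at \emph{every} $s\in(0,1)$ and equals the right-hand side of \eqref{F_k_to_gamma}; continuity of $F_k$ is then a corollary, not an input. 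Alternatively, you could repair your version without the paper's rearrangement: establish the identity at the (dense set of) continuity points of $F_k$ using only convergence in distribution, and then use monotonicity of $F_k$ together with continuity of $s\mapsto (k^2-1)!\,s^{k^2-1}\gamma_k(s^{-1})$ to extend it to all of $(0,1)$. Either way the needed extra step should be made explicit; your handling of $s\in(0,1/k)$ and the change of variables $c=1/s$ is otherwise fine.
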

Since we have defined the cumulative distribution function $F_z(s)$ in terms of the limit \eqref{z_limiting}, the content of \eqref{F_k_to_gamma} is that
\begin{equation}
\label{F_k_to_gamma_2}
\lim_{n\rightarrow\infty} \mathbb{P}\Big(\frac{\lambda_1}{n}\leq s\Big) = (k^2-1)! s^{k^2-1} \gamma_k(s^{-1}), \quad \textrm{for}\; s\in(0,1).
\end{equation}
From the definition \eqref{gamma_integral} it is plain that $\lim_{s\rightarrow 0^+} s^{k^2-1} \gamma_k(s^{-1}) = 0$ and from \cite[Sec. 4.4.3]{keating2018sums} it is plain $\lim_{s\rightarrow 1^{-}}s^{k^2-1} \gamma_k(s^{-1}) = 1$, and moreover it follows directly from the definition \eqref{gamma_integral} that $\gamma_k(c)$ is continuous. Because the limiting cumulative distribution function obviously has $F_z(x) = 0$ or $1$ if $x \leq 0$ or $x \geq 1$ respectively, Proposition \ref{prop:integral_rep} therefore implies the continuity of $F_z(x)$ for all $x \in \mathbb{R}$.

It remains then to verify Prop. \ref{prop:integral_rep}. Note that from Theorem \ref{RMT_to_zmeasures}, where $\lambda \vdash n$ is drawn according to $z$-measure with parameters $k,k$,
\begin{equation}
\int_{U(N)} A_{n,(k)}(g) A_{n,(k)}(g^{-1})\, dg = \frac{(k^2)_n}{n!} \mathbb{P}(\lambda_1 \leq N),
\end{equation}
while using the notation of (1.27) in \cite{keating2018sums}, the integral here is equal to
\begin{equation}
I_k(n;N).
\end{equation}
On the other hand, by \cite[Thm. 1.5]{keating2018sums},
\begin{equation}
I_k(n;N) = \gamma_k(n/N) N^{k^2-1} + O_k(N^{k^2-2}).
\end{equation}
(This is stated there for $0\leq n \leq kN$, but since the left hand side and the main term of the right hand side vanish outside this range it remains true for all $n, N$.) Furthermore $(k^2)_n/n! = n^{k^2-1}/(k^2-1)! + O(n^{k^2-2})$. Thus
\begin{equation}
\mathbb{P}\Big(\frac{\lambda_1}{n} \leq \frac{N}{n}\Big) = (k^2-1)! (N/n)^{k^2-1} \gamma_k(n/N) + O(1/\min(n,N)),
\end{equation}
and if $n\rightarrow\infty$ with $N = \lfloor sn \rfloor$, this establishes \eqref{F_k_to_gamma_2}.

\subsubsection{$z \in \mathbb{C}\setminus \mathbb{Z}$}
\label{C_less_Z}

Establishing continuity of the limiting cumulative distribution function for $z \in \mathbb{C}\setminus \mathbb{Z}$ requires different tools. We have noted already that we know there exists a random variable $\alpha_1^{(z)}$ such that $\lambda_1/n \rightarrow \alpha_1^{(z)}$ in distribution. It was observed in \cite{olshanski1998point} that $\alpha_1^{(z)}$ is best studied not in isolation but as the largest element of a stochastic point process with configurations
\begin{equation}
\label{omega_points}
\omega = (\alpha_1^{(z)}, \alpha_2^{(z)},...; - \beta_1^{(z)}, -\beta_2^{(z)},...)
\end{equation}
such that the points $\alpha,\beta$ lie on the Thoma simplex, $\Omega = \{\alpha_1 \geq \alpha_2 \geq ... \geq 0; \beta_1 \geq \beta_2 \geq ... \geq 0 :\; \sum_i \alpha_i + \beta_i \leq 1\}$. This perspective was further pursued in \cite{borodin1998point}. Background we will require about point processes can be found in \cite[Sec. 1]{soshnikov2000determinantal}.

In \cite{borodin1998point}, correlation functions for this point process are explicitly computed. We explain the results from this paper that we will use. In the first place, for $f$ a continuous function supported in the cube $[-1,1]^n$, we have
\begin{equation}
\mathbb{E} \sum_{\substack{j_1,...,j_n \\ \textrm{distinct}}} f(\omega_{j_1}, ..., \omega_{j_n}) = \int_{[-1,1]^n} f(x) \mu_n(dx_1, ..., dx_n),
\end{equation}
where the sum on the left hand side is over all collections of $n$ distinct indices of the configuration $\omega$, and where $\mu_n$ is a measure such that (see \cite[Thm. 2.4.1]{borodin1998point}) in the region $x_1, ..., x_n > 0$ and $\prod_{i\leq j} (x_i-x_j) \neq 0$ and $\sum x_i < 1$,
\begin{equation}
\label{mu_to_corr}
\mu_n(dx_1, ..., dx_n) = \rho_n(x_1,...,x_n) dx_1\cdots dx_n,
\end{equation}
for a function $\rho_n$ continuous in this region. Moreover (see \cite[Thm. 6.1]{olshanski1998point}), the measure $\mu_n$ is supported on the set $\{x:\; |x_1| + \cdots + |x_n| \leq 1\}$, and (see the beginning of the proof of \cite[Thm. 3.3.1]{borodin1998point}) $\mu_n$ is non-singular on the set $\{x:\; \sum |x_i| =1\}$, and (see \cite[Sec 2.5]{borodin1998point}, and also Remarks 2.2.2 and 2.5.3) $\mu_n$ is non-singular on the set $\prod_{i< j} (x_i-x_j) \neq 0$.

Thus taking all these facts we need together, we have that
\begin{equation}
\mathbb{E} \sum_{\substack{j_1,...,j_n \\ \textrm{distinct}}} f(\omega_{j_1}, ..., \omega_{j_n}) = \int_{[-1,1]^n} f(x) \rho_n(x_1,...,x_n) dx_1\cdots dx_n,
\end{equation}
where the correlation functions $\rho_n$ are supported on $x$ with $\sum |x_i| \leq 1$.

We now note that for $s \in (0,1)$, we have
\begin{equation}
\label{to_gap_prob}
\mathbb{P}(\alpha_1^{(z)} \leq s) = \mathbb{P}(\#_{(s,1]} = 0),
\end{equation}
where $\#_{(s,1]}$ is the random variable describing the number of points of the configuration $\omega$ lying in the interval $(s,1]$. By a well-known expression (see \cite[Prop 2.4, (2.22)]{johannson2005random}),
\begin{equation}
\label{gap_to_correlations}
\mathbb{P}(\#_{(s,1]} = 0) = 1+ \sum_{n=1}^\infty \frac{(-1)^n}{n!} \int_{(s,1]^n} \rho_n(x_1,...,x_n)\, dx_1 dx_2\cdots dx_n.
\end{equation}
The sum in \eqref{gap_to_correlations} converges and in fact for each $s\in(0,1)$ has only finitely many non-zero terms. To see this note that we have (\cite[(1.5)]{soshnikov2000determinantal}) 
\begin{equation}
\int_{(s,1]^n} \rho_n = \mathbb{E}\,\#_{(s,1]}(\#_{(s,1]}-1)\cdots (\#_{(s,1]}-(n-1)).
\end{equation} 
As $\sum \alpha_i + \beta_i \leq 1$ one sees that $\#_{(s,1]} \leq \lfloor 1/s \rfloor$, from which it follows that $\int_{(s,1]^n} \rho_n = 0$ for $n \geq \lfloor 1/s \rfloor$.

Thus from \eqref{to_gap_prob} and \eqref{gap_to_correlations} it will follow that $F_z(s)$ is continuous for $s\in (0,1)$ if each summand in \eqref{gap_to_correlations} is continuous in $s$. But this follows from the function $\rho_n$ being continuous (or indeed just a measurable function).

Thus we have shown that $F_z(s)$ is continuous for $s \in (0,1)$, and since obviously for the limiting cumulative distribution function we have $F_z(s) = 0$ or $1$ if $s\leq 0$ or $s\geq 1$ respectively, we need only show that $\lim_{s\rightarrow 0^+} F_z(s) = 0$ and $\lim_{s\rightarrow 1^-} F_z(s) = 1.$ The latter follows from the same continuity argument as above, while the former if false would imply that $\#_{(0,1]} = 0$ occurs with positive probability (by intersecting the nested events $\#_{(s,1]} = 0$). But $\#_{(0,1]} = 0$ implies for point configurations that $\alpha_1 = \alpha_2 = ... = 0$, and if this occurred with positive probability it would contradict the fact that the point processes we are considering are simple (that is, it will not happen that multiple points of a configuration coincide at the same location; see \cite[Sec. 2.5]{borodin1998point}). This completes the proof of continuity for $z \in \mathbb{C}\setminus \mathbb{Z}$, and therefore of Theorem \ref{thm:z_scalinglimit}.

\begin{figure}
\label{fig:P_z_60}
\hspace{-65pt}
\begin{subfigure}[b]{.65\textwidth}
  \includegraphics[width=\textwidth]{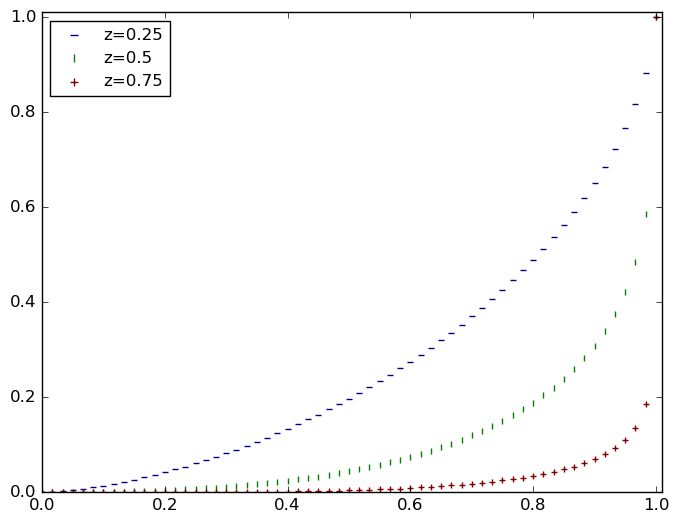}
  \caption{}
  \label{fig:P_z_60_lessthan1} 
\end{subfigure}
~
\begin{subfigure}[b]{.65\textwidth}
  \includegraphics[width=\textwidth]{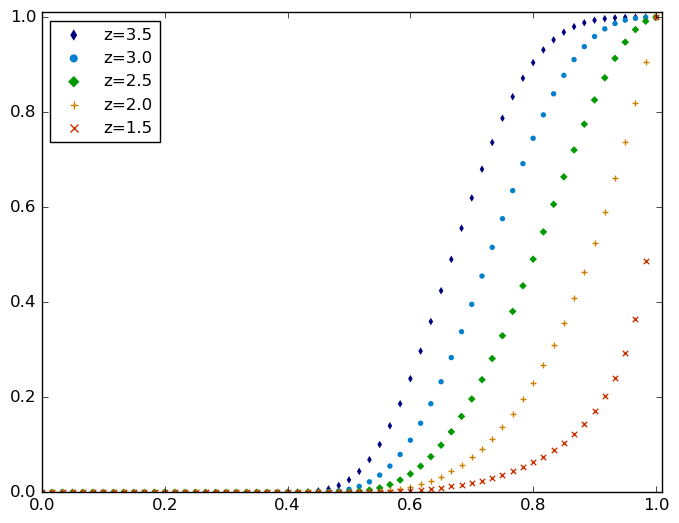}
  \caption{}
  \label{fig:P_z_60_biggerthan1}
\end{subfigure}

\centering
\begin{subfigure}[b]{.7\textwidth}
  \includegraphics[width=\linewidth]{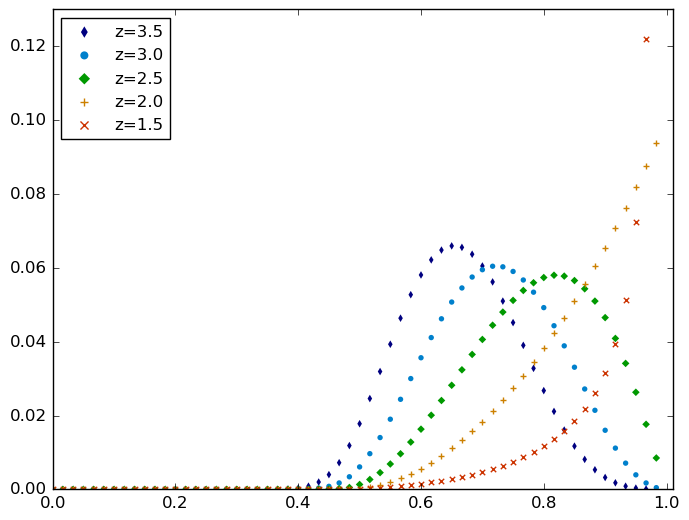}
  \caption{}
  \label{fig:P_z_60_biggerthan1_deriv}
\end{subfigure}

\caption{\textbf{(a)} A finite approximation of $F_z(s)$ for $z=0.25$, $0.5$, $0.75$. \textbf{(b)} A finite approximation for $F_z(s)$ for $z=1.5$, $2.0$, $2.5$, $3.0$, $3.5$. In both graphs, the finite approximations are the measures $\mathbb{P}_z^{(n)}(\lambda_1 \leq sn)$ for $n = 60$ with data points taken at $sn$ an integer. \textbf{(c)} A finite approximation of $F^\prime_z(s)$, obtained from the discrete derivative of the graphs in (b).
}
\end{figure}

\begin{figure}
\centering

	\begin{subfigure}[b]{\textwidth}
\centering
\includegraphics[width=.8\textwidth]{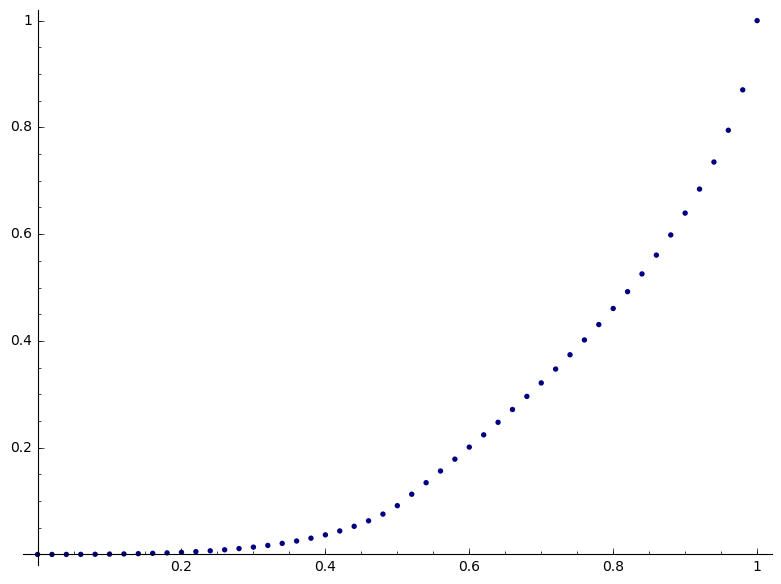}
\caption{}
\label{fig:G_n_50}
\end{subfigure}

\begin{subfigure}[b]{.8\textwidth}
  \includegraphics[width=\linewidth]{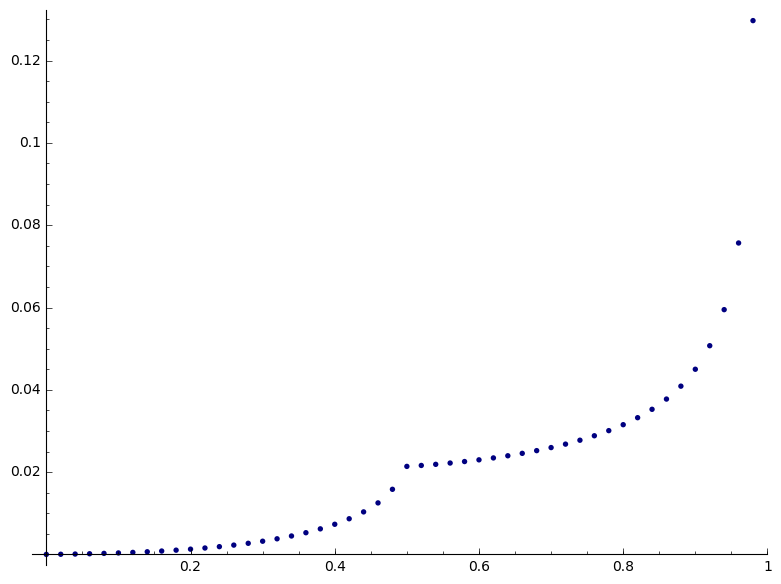}
  \caption{}
  \label{fig:G_n_50_deriv}
\end{subfigure}

\caption{\textbf{(a)} A finite approximation of $G(s)$, obtained from $T(n;N)/T(n;n)$ for $n=50$. \textbf{(b)} A finite approximation of $G'(s)$, obtained from the discrete derivative of the graph in (a).}
\end{figure}

\subsection{Positivity: on Theorems \ref{G_positivity} and \ref{thm:F_z_positivity}}

We now turn to the claims made in Section \ref{subsec:ffanalogue} about the positivity of the functions $F_z(s)$ and $G(s)$. We treat $F_z(s)$ first.

\begin{proof}[Proof of Theorem \ref{thm:F_z_positivity}]
For integer $k \ge 2$, an examination of \eqref{gamma_integral} shows that $\gamma_k(c)$ is supported on the interval $c \in [0,k]$ and non-zero on the interior of this interval. Hence by Proposition \ref{prop:integral_rep}, $F_k(s)$ vanishes for $s\in[0,1/k]$ and is positive for $s>1/k$ as claimed.

For non-integer $z > 0$, this is a direct result of the main Theorem of \cite{olshanski2018topological}; this Theorem is that for any non-degenerate $z$-measure (of which non-integer $z > 0$ is a special case), the topological support of the $z$-measure is the whole Thoma simplex $\Omega$. 
\end{proof}

\begin{remark}
While Theorem \ref{thm:F_z_positivity} shows that $F_z(s)>0$ for all positive $s$ if $z$ is not an integer, this function will nonetheless take extremely small values when $z > 1$. See Figure \ref{fig:P_z_60_biggerthan1}. (We do not know of any quantitative estimates in the literature.)
\end{remark}

The positivity of $G(s)$ is a corollary of that of $F_z(s)$.

\begin{proof}[Proof of Theorem \ref{G_positivity}]
Recall $G(s) = \mathbb{P}(1-s/\alpha_1 \leq Y \leq s/\alpha_1')$. Since $\alpha_1$ and $\alpha_1'$ can be made arbitrarily small with positive probability, for any $s$ one can make $1-s/\alpha_1\leq 1/4$ and $s/\alpha_1'\geq 3/4$ with positive probability. Since for $Y \sim \mathrm{Beta}(1/4,1/4)$ we have $1/4 \leq Y \leq 3/4$ with positive probability, the claim that $G(s) > 0$ follows.
\end{proof}

\subsection{Questions about derivatives}
\label{further_derivatives}
The existence and continuity of the derivatives $F_z'(s) = \tfrac{d}{ds} F_z(s)$ for $s \in (0,1)$ is implied by \eqref{F_k_to_gamma} and \eqref{to_gap_prob}--\eqref{gap_to_correlations}. Figure \ref{fig:P_z_60_biggerthan1_deriv} numerically plots these and suggests the following:

\begin{conj}
\label{deriv_behavior}
For $z \in (0,1)\cup(1,2]$, $F'_z$ is a monotone increasing function, while for $z>2$, $F'_z$ is unimodal with a maximum attained for some $\delta \in (0,1)$.
\end{conj}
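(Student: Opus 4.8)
The plan is to prove the integer and non-integer cases separately, as in the proof of Theorem~\ref{thm:z_scalinglimit}.

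For an integer parameter $z=k\geq 2$, I would use the representation of Proposition~\ref{prop:integral_rep}, $F_k(s)=(k^2-1)!\,s^{k^2-1}\gamma_k(1/s)$ for $s\in(1/k,1)$, where $\gamma_k$ from \eqref{gamma_integral} is the probability density of the sum of the $k$ points of the Legendre $\beta=2$ ensemble (joint density proportional to $\Delta(w)^2$ on $[0,1]^k$); this is a symmetric piecewise-polynomial function of degree $k^2-1$ supported on $[0,k]$ with knots at the integers $1,\dots,k-1$. Differentiating the representation and substituting $c=1/s$ exhibits $F_k'$ as a piecewise rational function on $(1/k,1)$ and turns the vanishing of $(F_k')'$ into the vanishing of a fixed second-order Euler-type differential expression in $\gamma_k$ at $c=1/s$. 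Two points then remain. First, that $F_k'$ has at most one interior critical point on $(1/k,1)$; the cleanest sufficient condition is log-concavity of $F_k'$ (equivalently, a single-sign-change property of the Euler-type expression above), which is elementary for $k=2$ since $\gamma_2(c)=c^3/6$ on $[0,1]$ and $(2-c)^3/6$ on $[1,2]$ (so $F_2'(s)=6(2s-1)^2$), and which for general $k$ I would attempt to derive from the holonomic ODE satisfied by $\gamma_k$ as a Selberg-type integral, or from log-concavity results for linear eigenvalue statistics of unitary-invariant ensembles. Second, the location of the mode, which is governed by the sign of $(F_k')'$ at the right endpoint $s=1^-$: one expects $(F_k')'(1^-)>0$ exactly when $k=2$, placing the mode at $s=1$, and $(F_k')'(1^-)<0$ for $k\geq 3$, forcing an interior mode $\delta\in(0,1)$ by the first point. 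The cases $k=2,3$ can be checked by direct computation and serve as a sanity test.

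For a non-integer parameter $z$ I would use the determinantal point process on the Thoma simplex of \cite{borodin1998point} already invoked in Section~\ref{C_less_Z}, whose correlation kernel $K_z$ on $(0,1)$ is a Gauss hypergeometric kernel of integrable type; by Theorem~\ref{thm:F_z_positivity} (together with the full topological support of the $z$-measure) the density $F_z'$ is strictly positive on $(0,1)$. The standard identity for the top point of a determinantal process gives $F_z'(s)=r_z(s)\,F_z(s)$ with $r_z(s):=R_{z,s}(s,s)$ the diagonal of the resolvent kernel of $K_z$ restricted to $(s,1)$, so the sign of $(F_z')'(s)$ is the sign of $r_z(s)^2+r_z'(s)$. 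The plan is to exploit the Riemann--Hilbert / integrable structure of $K_z$ -- the Fredholm determinant $F_z(s)$ being expressible through a Painlev\'e~VI transcendent, as for the classical Jacobi-type kernels -- to obtain a closed ODE system for $r_z$ in $s$, and to read off that $s\mapsto r_z(s)^2+r_z'(s)$ is nonnegative throughout when $z\in(0,1)\cup(1,2]$ and changes sign exactly once, from positive to negative, when $z>2$. The transition at $z=2$ should again be visible at the endpoint $s=1^-$ where $K_z$ degenerates, the expectation being $(F_z')'(1^-)\geq 0$ if and only if $z\leq 2$.

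The crux -- and the reason the statement is only a conjecture -- is the uniform control of these densities: proving log-concavity, or merely the ``at most one interior critical point'' property, of $F_z'$ for all $z$ on compact sets, and pinning the phase transition at exactly $z=2$ rather than at some nearby value. For small integer $k$ everything reduces to an explicit piecewise-polynomial computation, but a statement uniform in $k$, and the transcendental non-integer case, seem to require either new structural input on the $z$-measure point process near the edge $s=1$ or a careful analysis of the relevant Painlev\'e~VI solution. A more modest intermediate goal worth isolating is to show that $F_z'$ has only finitely many local extrema, with a bound locally uniform in $z$, which already justifies the qualitative picture in Figure~\ref{fig:P_z_60_biggerthan1_deriv}.
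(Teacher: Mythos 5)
There is nothing in the paper to compare against here: the statement you are addressing is Conjecture \ref{deriv_behavior}, which the paper explicitly presents as an open problem suggested by the numerical plots of finite-$n$ approximations $\mathbb{P}_z^{(n)}(\lambda_1\le sn)$ (Figure \ref{fig:P_z_60_biggerthan1_deriv}); no proof is given or claimed, and Section \ref{further_derivatives} only establishes existence and continuity of $F_z'$ via \eqref{F_k_to_gamma} and \eqref{to_gap_prob}--\eqref{gap_to_correlations}. Your text is likewise not a proof but a research plan, and you say so yourself. What you actually verify is the $k=2$ case: your computation $\gamma_2(c)=c^3/6$ on $[0,1]$, $(2-c)^3/6$ on $[1,2]$, hence $F_2(s)=(2s-1)^3$ and $F_2'(s)=6(2s-1)^2$, is correct and consistent with the conjectured monotonicity at $z=2$. (One small slip: with the normalization in \eqref{gamma_integral}, $\gamma_k$ is only proportional to the density of the sum of the $k$ points of the $\Delta^2$-ensemble, not equal to it -- e.g. $\int_0^2\gamma_2=1/12$ -- though this does not affect your formulas.)

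The genuine gaps are exactly the steps you flag as hopes rather than arguments. For integer $k$, neither log-concavity of $F_k'$ nor the weaker ``at most one interior critical point'' property is established for general $k$, and the claimed sign of $(F_k')'(1^-)$, which is what would pin the transition at $k=2$ versus $k\ge 3$, is asserted but not computed. For non-integer $z$, your key identity $F_z'(s)=r_z(s)F_z(s)$ with $r_z$ the diagonal of a resolvent kernel presupposes that the spectral $z$-measure point process on the Thoma simplex is determinantal with an explicit integrable kernel; in the Borodin--Olshanski framework the determinantal structure (Whittaker/hypergeometric-type kernels) is obtained for a transformed or lifted version of the process, so even setting up the Painlev\'e-type analysis you envision requires care, and the asserted sign behaviour of $r_z(s)^2+r_z'(s)$ and the endpoint degeneration at $z=2$ are not derived from anything. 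In short, the proposal is a reasonable program with a correct base case, but it contains no argument that would upgrade the paper's conjecture to a theorem, and the paper itself offers none either.
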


It is possible with explicit computation to see that the derivative $G'(s)$ is well defined and continuous for $s \in(0,1)$, though we leave details to the reader. The function $G(s)$ appears to exhibit a phase change at $s=1/2$, and this becomes most apparent in a graph of $G'(s)$; in this graph there seems to be a kink at $s=1/2$ (see Figure \ref{fig:G_n_50_deriv}). This phase change can be understood to be a consequence of the $z=1/2$ case of Conjecture \ref{deriv_behavior}. We outline how so, but leave details to the reader: by an explicit computation, using the continuity of $F'_{1/2}(s)$ for $s \in (0,1)$, the second derivative $G''(s)$ can be seen to be well defined at all points except $s=1/2$. Furthermore, at $s=1/2$, one can show from explicit computation and Conjecture \ref{deriv_behavior} that $\lim_{s\rightarrow 1/2^-} G''(s) \neq \lim_{s\rightarrow 1/2^-} G''(s)$. The key input from the Conjecture is the implication that $\lim_{s\rightarrow 1^-}F'_{1/2}(s) \neq 0$; one also needs to use the fact that for $s > 1$, $F_z'(s) = 0$, which is evident from the definition.

Likely other phase changes in the function $G(s)$ exist as $s$ varies and become visible in higher derivatives (one might expect at $s = 1, 1/2, 1/3, 1/4,...$). Conjectures \ref{b_variance_conj} and \ref{b_variance_conj_q} suggest that phase changes in $G(s)$ should have an arithmetic interpretation, and understanding this remains an interesting problem.

\subsection{Questions about log-concavity and a gaussian limit}
\label{further_logconcave}

In \cite{basor2018some} it is shown (as a consequence of Theorem 3.1 there) that the functions $\gamma_k(c)$ have a gaussian limit shape in the sense that
\begin{equation}
\lim_{k\rightarrow\infty} \frac{G(2k+1)}{G(k+1)^2} \gamma_k(c) = \sqrt{\frac{8}{\pi}} e^{-8t^2}, \quad \textrm{for}\; c = \tfrac{k}{2} + t.
\end{equation}
(c.f. \cite{lambert2019stein} for related but different results in random matrix theory.)

One may also prove that the function $\gamma_k(c)$ is log-concave for all $k$. This observation seems to be new; a sketch of a proof is simply as follows: use the integral representation \cite[(4.22)]{keating2018sums} and the fact that marginals preserve log-concavity (see \cite[Theorem 3.3]{saumard2014log}).

One may sensibly ask the same questions when \eqref{F_k_to_gamma} is used to replace integer $k$ by a continuous parameter. That is, is it true that
\begin{equation}
\lim_{z\rightarrow\infty} \frac{G(2z+1)}{\Gamma(z^2) G(z+1)^2} c^{z^2-1} F_z(1/c) = \sqrt{\frac{8}{\pi}} e^{-8t^2}, \quad \textrm{for}\; c = \frac{z}{2}+t?
\end{equation}
Furthermore, is $c^{z^2-1} F_z(1/c)$ log-concave in $c$? We do not know the answer to these questions.

\section{Approximating $B(x)$}
\label{sec:B(x)}

Recall $B(x)$ counts the number of natural numbers less than or equal to $x$ which can be represented as sums of two squares, and the corresponding Dirichlet series is $F(s)$, defined by \eqref{eq:F_def}. $F(s)$ satisfies the factorization \eqref{eq:F_factor} for $\Re s > 1$, and assuming the Riemann Hypothesis for $\zeta(s)$ and $L(s,\chi_4)$, $F(s)$ has an analytic continuation to the cut half-plane $\{s: \Re s > 1/2,\, s\notin (1/2,1]\}$, and satisfies $F(s) = (s-1)^{-1/2} f(s)$ for a function $f(s)$ analytic in $\Re s > 1/2$, where the principal branch of the square root function is taken. Our purpose in this appendix is to prove the following folklore result:

\begin{thm}
\label{thm:B_approx}
On the assumption of the Riemann Hypothesis for $\zeta(s)$ and $L(s,\chi_4)$, for any $\epsilon > 0$,
$$
B(x) = \frac{1}{\pi} \int_{1/2}^1 \frac{x^s}{(1-s)^{1/2} s} f(s)\, ds + O_\epsilon(x^{1/2+\epsilon}).
$$
\end{thm}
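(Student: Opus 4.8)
The plan is to apply Perron's formula to the Dirichlet series $F(s) = \sum_n b(n) n^{-s}$ and then deform the contour of integration, making use of the analytic continuation $F(s) = (s-1)^{-1/2} f(s)$ afforded by \eqref{eq:F_factor}. First I would start from a truncated Perron formula: for suitable $c > 1$ and a parameter $T$ to be chosen,
\begin{equation*}
B(x) = \frac{1}{2\pi i} \int_{c-iT}^{c+iT} F(s) \frac{x^s}{s}\, ds + (\text{error}),
\end{equation*}
where the standard error term in truncated Perron (see e.g. Titchmarsh or Montgomery--Vaughan) is controlled using the bound $\sum_n b(n) \ll x/\sqrt{\log x}$ together with the usual $x^c/(T|\log(x/n)|)$ contributions; taking $c = 1 + 1/\log x$ this error is $O(x^{1+\epsilon}/T)$ up to the terms near $n \approx x$, which are handled by a standard argument.

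Next I would shift the contour from $\Re s = c$ leftward to the vertical line $\Re s = 1/2 + \epsilon$, but being careful about the branch point and cut along $(1/2,1]$. The function $F(s)x^s/s$ is analytic in the cut region $\{\Re s > 1/2\} \setminus (1/2,1]$ under RH for $\zeta$ and $L(\cdot,\chi_4)$, with no pole at $s=1$ (the singularity there is the branch point $(s-1)^{-1/2}$, which is integrable). So the contour integral over $[c-iT, c+iT]$ equals the integral over the path that runs down to near the real axis, wraps around the cut $(1/2,1]$ — i.e. goes along the bottom of the cut from $1$ to $1/2$, around $s = 1/2$, and back along the top of the cut from $1/2$ to $1$ — plus the integral over $\Re s = 1/2+\epsilon$ between heights $\pm T$, plus the horizontal connecting segments at height $\pm T$. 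On the top and bottom of the cut, $(s-1)^{-1/2}$ takes conjugate values: writing $s = \sigma \pm i0$ with $\sigma \in (1/2,1)$, we have $(s-1)^{-1/2} = |\sigma-1|^{-1/2} e^{\mp i\pi/2} = \mp i (1-\sigma)^{-1/2}$, so the two sides of the cut do not cancel but \emph{add}, and the wrap-around contributes
\begin{equation*}
\frac{1}{2\pi i}\Big( \int_{\text{top}} + \int_{\text{bottom}} \Big) F(s)\frac{x^s}{s}\, ds = \frac{1}{2\pi i} \int_{1/2}^{1} \big( {-i} - ({+i}) \big)(1-\sigma)^{-1/2} \frac{x^\sigma}{\sigma} f(\sigma)\, d\sigma = \frac{1}{\pi}\int_{1/2}^1 \frac{x^\sigma}{(1-\sigma)^{1/2}\sigma} f(\sigma)\, d\sigma,
\end{equation*}
which is exactly the claimed main term. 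The small circle around $s = 1/2$ contributes $o(1)$ since $f$ is bounded there and the arclength shrinks while $(s-1)^{-1/2}$ stays bounded near $1/2$.

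It remains to bound the shifted vertical integral on $\Re s = 1/2 + \epsilon$ and the horizontal segments at height $\pm T$. This is the main obstacle, and it is where RH is genuinely used: one needs $F(\sigma + it) \ll (|t|+1)^{\epsilon}$ (or a small power) uniformly for $\sigma \geq 1/2 + \epsilon$. From the factorization \eqref{eq:F_prod}, $F(s)^2 = \zeta(s)L(s,\chi_4)(1-2^{-s})^{-1} \cdot \prod_{k\ge1}(\zeta(2^k s)/L(2^k s,\chi_4))(1-2^{-2^k s}) $, and for $\Re s > 1/2 + \epsilon$ each factor $\zeta(2^k s), L(2^k s, \chi_4)$ with $k \geq 1$ has real part $> 1 + 2\epsilon$, hence is bounded and bounded away from $0$; so the convergent product over $k \geq 1$ contributes $O(1)$. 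Thus $F(s) \ll |\zeta(s) L(s,\chi_4)|^{1/2} \cdot (|t|+1)^{o(1)}$, and on RH the convexity/subconvexity bounds $\zeta(1/2+\epsilon+it), L(1/2+\epsilon+it,\chi_4) \ll (|t|+1)^{\epsilon}$ (indeed the Lindelöf-type bound following from RH) give $F(1/2+\epsilon+it) \ll (|t|+1)^{\epsilon}$. Therefore the vertical integral is $\ll x^{1/2+\epsilon}\int_{-T}^{T}(|t|+1)^{\epsilon}|1/2+\epsilon+it|^{-1}\, dt \ll x^{1/2+\epsilon}T^{\epsilon}$, and the horizontal segments at height $T$ contribute $\ll x^c T^{-1+\epsilon} = x^{1+\epsilon}T^{-1+\epsilon}$. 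Choosing $T = x$ balances the Perron error $O(x^{1+\epsilon}/T) = O(x^\epsilon)$ against everything else and yields $B(x) = \frac{1}{\pi}\int_{1/2}^1 \frac{x^s}{(1-s)^{1/2}s}f(s)\,ds + O_\epsilon(x^{1/2+\epsilon})$, as claimed. The only delicate points beyond this routine bookkeeping are (i) justifying the contour deformation rigorously around the branch cut (done by taking a keyhole contour with the cut thickened to width $\delta \to 0$ and using continuity of $f$ up to the cut under RH), and (ii) ensuring the subconvexity input is stated in the precise form needed — but under RH even the trivial Lindelöf bound suffices, so no deep input is required.
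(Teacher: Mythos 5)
Your proposal is essentially the paper's own proof: truncated Perron, a contour shift to $\Re s = 1/2+\epsilon$ with a keyhole hugging both sides of the cut $(1/2,1]$, and RH-conditional Lindel\"of bounds to control the vertical and horizontal pieces; the main term arises exactly as you say from the two sides of the cut adding rather than cancelling. Three small points need repair, though none is a conceptual gap. First, the keyhole must wrap around the branch point at $s=1$ (as in the paper, going out to $1+\delta$), not ``around $s=1/2$'': you cannot encircle $s=1/2$, since under RH alone $F$ has no continuation past $\Re s = 1/2$ (zeros on the critical line), and in fact no circle there is needed --- the two sides of the cut simply attach to the vertical line at $\Re s = 1/2+\epsilon$, while the small arc whose contribution must be shown to vanish is the one at $s=1$, where $(s-1)^{-1/2}$ is unbounded but integrable, giving $O(\delta^{1/2})$. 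Second, your orientation bookkeeping is internally inconsistent: with the orientation you describe, the combination $(-i)-(+i)=-2i$ would give $-\tfrac{1}{\pi}\int$, whereas the orientation actually induced by shifting the upward line leftward (bottom of the cut traversed left to right, top right to left) gives $+\tfrac{1}{\pi}\int$, which is the correct sign in the statement. Third, the keyhole only produces $\int_{1/2+\epsilon}^{1}$; one must note (as the paper does) that extending the integral down to $1/2$ changes it only by $O_\epsilon(x^{1/2+\epsilon})$, using that $f$ is bounded on the real segment near $1/2$.
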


\begin{proof}
We have by Perron's formula (see \cite[Cor. 5.3]{montgomery07multiplicative}), for $T = x^{100}$,
$$
B(x) = \frac{1}{2\pi i} \int_{2 - i T}^{2+i T} \frac{x^s}{s} F(s)\, ds + O(1).
$$
For arbitrary $\epsilon > 0$, let $\sigma = 1/2+\epsilon$, and let $\mathcal{K}_\delta$ be a contour from $\sigma-i\delta$ to $1+\delta - i\delta$ to $1 +\delta + i \delta$ to $\sigma + i \delta$ for $\delta > 0$. On the Riemann Hypothesis the contour from $2-iT$ to $2+iT$ may be shifted to a contour from $2 - i T$ to $\sigma- iT$ to $\sigma - i\delta$, followed by $\mathcal{K}_\delta$, followed by a contour from $\sigma+i\delta$ to $\sigma+iT$ to $2 + iT$. The Lindel\"of estimates $\zeta(s), L(s,\chi_4) = O_\epsilon(|s|^\epsilon)$ for $\Re s \geq 1/2,$ $|s-1|\geq 1/10$ can be used to bound those contours other than $\mathcal{K}_\delta$, yielding
$$
B(x) = \frac{1}{2\pi i} \int_{\mathcal{K}_\delta} \frac{x^s}{s} \frac{f(s)}{(s-1)^{1/2}}\, ds + O_\epsilon(x^{1/2+10\epsilon}).
$$
Letting $\delta \rightarrow 0$ shows this is
$$
= \frac{1}{\pi} \int_\sigma^1  \frac{x^s}{(1-s)^{1/2} s} f(s)\, ds + O_\epsilon(x^{1/2+10\epsilon}) = \frac{1}{\pi} \int_{1/2}^1 \frac{x^s}{(1-s)^{1/2} s} f(s)\, ds + O_\epsilon(x^{1/2+10\epsilon}),
$$
which yields the claim.
\end{proof}

Note that Radziejewski \cite{radziejewski2014oscillatory} has shown that this estimate is close to optimal in the sense that Theorem \ref{thm:B_approx} is not true if the error term is replaced by $O(x^{1/2}(\log x)^{-3/2-\epsilon})$ for any $\epsilon > 0$.

We note that by a similar contour shifting argument:

\begin{thm}
On the assumption of the Riemann Hypothesis, for $z > 0$,
$$
D_z(x) = \overline{D}_z(x) + O_\epsilon(x^{1/2+\epsilon}),
$$
for all $\epsilon > 0$, where $D_z(x)$ is defined in \eqref{eq:D_z_def}, and $\overline{D}_z(x)$ is defined in \eqref{eq:D_z_approx}.
\end{thm}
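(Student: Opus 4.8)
The plan is to repeat the proof of Theorem~\ref{thm:B_approx} essentially word for word, with $\zeta(s)^z$ in the role of $F(s)$ and the cut circle $\mathcal C_{3/4}$ in the role of the segment $[1/2,1]$. First I would record the analytic structure of $\zeta(s)^z$: writing $\log\zeta(s)=-\log(s-1)+\log\big((s-1)\zeta(s)\big)$ and using that $(s-1)\zeta(s)$ is entire with value $1$ at $s=1$, one sees on the Riemann Hypothesis that $\zeta(s)^z=\exp(z\log\zeta(s))$ continues analytically to $\{\Re s>1/2\}$ slit along $(-\infty,1]$; in particular, since the zeros of $\zeta$ with $\Re s\ge 1/2$ all lie on the critical line and hence outside the disc $\{|s-1|<3/4\}$, the function $\tfrac{x^s}{s}\zeta(s)^z$ is analytic in the slit disc $\{|s-1|<3/4\}\setminus[1/4,1]$ (the pole $s=0$ lying at distance $1$ from the centre). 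The branch used here agrees with the Dirichlet-series branch of $\zeta(s)^z$ for $\Re s>1$ and with the branch used to define $\overline{D}_z(x)$ on $\mathcal C_{3/4}$, since all three are the analytic continuation from $\Re s>1$ through the slit disc.

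Next I would apply Perron's formula (\cite[Cor.~5.3]{montgomery07multiplicative}) with $c=2$ and $T=x^{100}$, using $|d_z(n)|\le d_{\lceil z\rceil}(n)\ll_\epsilon n^\epsilon$ to bound the truncation error, obtaining
\[
D_z(x)=\frac{1}{2\pi i}\int_{c-iT}^{c+iT}\frac{x^s}{s}\,\zeta(s)^z\,ds+O(1).
\]
I would then shift the segment $\Re s=c$ leftward to the line $\Re s=\tfrac{1}{2}+\epsilon$, which forces a Hankel-type detour around the branch point at $s=1$; deforming that detour outward within the slit disc (where the integrand is analytic), it becomes $\mathcal C_{3/4}$ except for the contribution of the cut segment $[1/4,\tfrac{1}{2}+\epsilon]$, on which $\zeta(\sigma)$ and $1/\sigma$ are bounded, so this extra piece is $O\big(x^{1/2+\epsilon}\big)$. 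With orientations chosen as in the definition of $\overline{D}_z$ (so that the main term appears with a plus sign, exactly as for $\overline{B}$ in Theorem~\ref{thm:B_approx}), the detour thus contributes $\overline{D}_z(x)+O(x^{1/2+\epsilon})$. The two horizontal segments at height $\pm T$ contribute $\ll x^{2}T^{z\delta-1}$ and the vertical segment $\Re s=\tfrac{1}{2}+\epsilon$ contributes $\ll x^{1/2+\epsilon}\int_1^{T}t^{z\delta-1}\,dt\ll x^{1/2+\epsilon}T^{z\delta}$, where I use the Lindel\"of-on-RH bound $\zeta(s)\ll_\delta|s|^\delta$ for $\Re s\ge\tfrac{1}{2}$, $|s-1|\ge\tfrac{1}{10}$, so that $\zeta(s)^z\ll_\delta|s|^{z\delta}$; choosing $\delta=\delta(\epsilon,z)$ small enough makes $T^{z\delta}=x^{100z\delta}\le x^\epsilon$, and all of these are absorbed into $O_\epsilon(x^{1/2+\epsilon})$.

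The only point needing care — exactly as in Theorem~\ref{thm:B_approx} — is the bookkeeping at the branch point $s=1$: one must keep the branch of $\zeta(s)^z$ consistent along the deformation and check that the deformation never crosses $s=0$, consistently with $s=0$ lying outside $\{|s-1|<3/4\}$ so that $\overline{D}_z$ picks up no residue there. When $z$ is a positive integer the cut is vacuous, $\mathcal C_{3/4}$ is a genuine circle around the order-$z$ pole at $s=1$, and $\overline{D}_z(x)$ reduces to $\mathrm{Res}_{s=1}\tfrac{x^s}{s}\zeta(s)^z$, a polynomial in $\log x$ times $x$; the argument above handles all $z>0$ uniformly, and there is no serious obstacle beyond this branch bookkeeping — this is the folklore computation the appendix advertises.
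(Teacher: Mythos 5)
Your argument is correct and is exactly the "similar contour shifting argument" the paper intends: Perron's formula with $T=x^{100}$, a shift to $\Re s=\tfrac12+\epsilon$ under RH with Lindel\"of bounds, and a detour at the branch point $s=1$, the only modification from Theorem \ref{thm:B_approx} being that you keep the loop as the cut circle $\mathcal{C}_{3/4}$ (plus the negligible edge segments along $[1/4,\tfrac12+\epsilon]$) rather than collapsing it onto the cut, which is indeed necessary since $(s-1)^{-z}$ is not integrable for $z\ge 1$ and matches the definition of $\overline{D}_z(x)$. The branch and orientation bookkeeping you flag is handled correctly, so there is no gap.
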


\section{Approximating $\mathbb{V}_b(X,p)$ for large $p$}
\label{sec:appendix large p}

We have noted that in Figure \ref{fig:data_and_prediction_AP}, for $\delta$ near $0$, there is a bump in data which does not appear in Conjecture \ref{b_variance_conj_q}. For reasons both numerical and theoretical we believe this bump fades away $X\rightarrow\infty$; in this appendix we explain now how this phenomena may be understood in terms of a lower order term, at least for $\delta$ sufficiently small in terms of $X$.

Connors and Keating \cite{connors1997} conjectured that 
\begin{equation}
\sum_{n \le x} b(n) b(n+q) \sim \frac{x}{2\log x} \alpha(q)
\end{equation}
for a precise positive multiplicative function $\alpha$. For odd primes $p$, the conjecture reads
\begin{equation}\label{eq:kc}
\sum_{n \le x} b(n) b(n+p) \sim \frac{x}{2\log x} \cdot \begin{cases} (1+\frac{1}{p}) & \mbox{if $p\equiv 3 \bmod 4$} \\ 1 & \mbox{if $p \equiv 1 \bmod 4$} \end{cases}.
\end{equation}
\begin{figure}[h]
	\includegraphics[width=\linewidth]{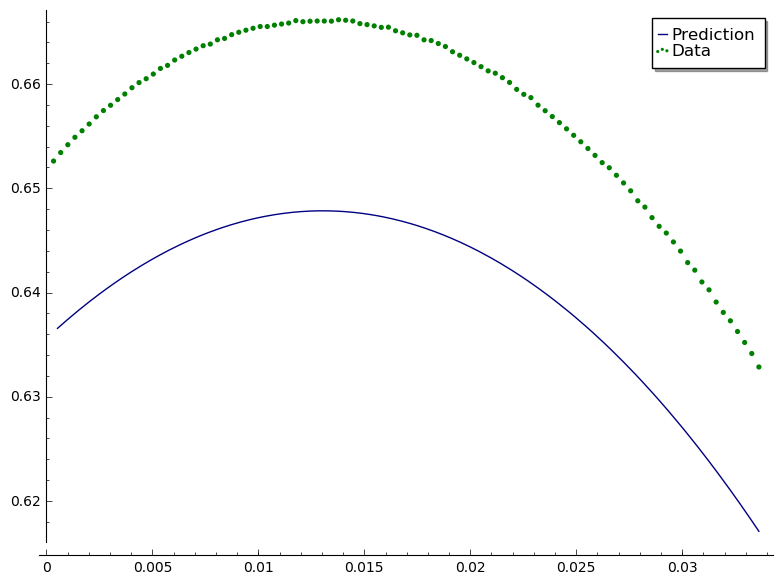}
	\caption{Numerically produced data compared to the Connors-Keating induced prediction: Consider the count of elements of $S$ less than $X = 9 \cdot 10^8$ congruent to a random $a$ modulo $p$, and let $\mathbb{V}_b(X,p)$ be the variance of these counts as $a$ varies.	Let $\delta = \delta_p = \log(X/p)/\log(X)$. For a selection of primes $p$ in between $X/2$ and $X$ -- for which $\delta \in (0,\log 2 / \log (X))$ -- we plot the points $(\delta, \mathbb{V}_b(X,p)/((X/p)/\sqrt{\log X}))$ under the label \textbf{data}, while the \textbf{prediction} is a plot of the curve $(\delta, K + (-K^2 X^{\delta} + 1-X^{-\delta})/\sqrt{\log X})$.}
\end{figure}

\begin{thm}
\label{thm:HL_correction}
Suppose that \eqref{eq:kc} holds with an error term of $O(x/(\log x)^{3/2})$, uniformly for $p=O(x)$. Fix $\epsilon>0$. Suppose $p = X^{1-\delta}$ is a prime with $\delta \in (\epsilon/\log X, \log 2/\log X)$. (In particular, $p \in (X/2,X)$.) Then, as $X \to \infty$,
\begin{equation}
\label{HL_prediction}
\frac{\mathbb{V}_b(X,p)}{(X/p)/\sqrt{\log X}} = K + \frac{-K^2 X^{\delta} + 1-X^{-\delta}}{\sqrt{\log X}}+ O_{\epsilon}\left(\frac{1}{\log X}\right).
\end{equation}
\end{thm}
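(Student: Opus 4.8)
The plan is to exploit the structural simplification afforded by the hypothesis $\delta<\log 2/\log X$: this forces $p>X/2$, so that $2p>X$ and hence every residue class modulo $p$ contains at most two integers in $[1,X]$. For $1\le a\le p-1$ (and since $p$ is prime these are exactly the classes with $(a,p)=1$), the progression $\{n\le X:\ n\equiv a\bmod p\}$ is $\{a,a+p\}$ if $a\le X-p$ and $\{a\}$ if $X-p<a\le p-1$. Writing $N_a:=\sum_{n\equiv a\,(p),\,n\le X}b(n)=b(a)+b(a+p)\mathbf{1}_{a\le X-p}$ and using that $b$ is $\{0,1\}$-valued (so $b^2=b$), I would expand $N_a^2$ and sum over $a$; the linear terms telescope via $\sum_{a\le X-p}b(a)+\sum_{X-p<a\le p-1}b(a)=\sum_{a=1}^{p-1}b(a)=B(p)-b(p)$ and $\sum_{a\le X-p}b(a+p)=B(X)-B(p)$, giving
\begin{equation*}
\sum_{a=1}^{p-1}N_a^2=B(X)-b(p)+2S,\qquad S:=\sum_{n\le X-p}b(n)b(n+p).
\end{equation*}
Since the only multiple of $p$ below $X$ is $p$ itself, the mean subtracted in \eqref{eq:vbxq} is $\mu=(B(X)-b(p))/(p-1)$, and $\mathbb{V}_b(X,p)=\frac{1}{p-1}\sum_a N_a^2-\mu^2$.

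Next I would insert the two arithmetic inputs. Landau's estimate \eqref{Landau_asymp} gives $B(X)=KX/\sqrt{\log X}+O(X/(\log X)^{3/2})$, and the hypothesised Connors--Keating asymptotic \eqref{eq:kc} with error $O(x/(\log x)^{3/2})$, applied with $x=X-p$, gives $S=\tfrac{X-p}{2\log(X-p)}\alpha(p)+O(X/(\log X)^{3/2})$. Here the lower bound $\delta>\epsilon/\log X$ forces $p\le Xe^{-\epsilon}$, so $X-p\asymp_\epsilon X$ and $\log(X-p)=\log X+O_\epsilon(1)$; and since $p\asymp X$ we have $\alpha(p)=1+O(1/p)$, so in this range the Connors--Keating correction is invisible and $S=\tfrac{X-p}{2\log X}+O_\epsilon(X/(\log X)^{3/2})$. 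Dividing the identity for $\mathbb{V}_b(X,p)$ through by $p-1=p(1+O(1/p))$, substituting $X/p=X^\delta$ (so $(X-p)/p=X^\delta-1$) and using that $X^\delta\in(e^\epsilon,2)$ is bounded, I would collect
\begin{equation*}
\frac{1}{p-1}\sum_a N_a^2=\frac{KX^\delta}{\sqrt{\log X}}+\frac{X^\delta-1}{\log X}+O_\epsilon\!\Big(\frac{1}{(\log X)^{3/2}}\Big),\qquad \mu^2=\frac{K^2X^{2\delta}}{\log X}+O\!\Big(\frac{1}{(\log X)^2}\Big).
\end{equation*}
Subtracting, and then dividing by $(X/p)/\sqrt{\log X}=X^\delta/\sqrt{\log X}$, produces $K+(X^\delta-1-K^2X^{2\delta})/(X^\delta\sqrt{\log X})+O_\epsilon(1/\log X)$, and $(X^\delta-1-K^2X^{2\delta})/X^\delta=-K^2X^\delta+1-X^{-\delta}$, which is exactly \eqref{HL_prediction}.

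The computation is essentially bookkeeping once the structural observation is made; the only genuine content is (i) recognising that $p>X/2$ collapses each progression to at most two terms, so that the entire variance is governed by the single pair-correlation sum $S$, and (ii) checking that every error term, after division first by $p\asymp X$ and then by $X^\delta/\sqrt{\log X}$, genuinely lands at size $O_\epsilon(1/\log X)$. This is where both constraints on $\delta$ are used: the upper bound to make the ``at most two terms'' reduction valid, and the lower bound to keep $p$ (hence $X-p$, and hence $X^\delta=X/p$) bounded away from the degenerate values $X$ and $1$, so that $\log(X-p)=\log X+O_\epsilon(1)$ and the normalising factor $X^\delta$ is bounded. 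I anticipate the main obstacle is purely clerical: confirming that the Connors--Keating correction factor $\alpha(p)$ really is negligible in this regime (it contributes only $O(1/\log X)$ to $S$), that the cross term in $\mu^2$ and the relative error from replacing $p-1$ by $p$ are both absorbed, and that no surviving term exceeds the claimed $O_\epsilon(1/\log X)$.
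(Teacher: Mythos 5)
Your proposal is correct and follows essentially the same route as the paper's proof: the observation that $p>X/2$ collapses each progression to at most two terms, the resulting identity $\mathbb{V}_b(X,p)=\frac{B(X)}{p-1}-\bigl(\frac{B(X)}{p-1}\bigr)^2+\frac{2}{p-1}\sum_{a\le X-p}b(a)b(a+p)+O(1/X)$, and the insertion of \eqref{Landau_asymp} together with the assumed form of \eqref{eq:kc}. The only difference is that you carry out the bookkeeping (the $\alpha(p)=1+O(1/p)$ and $\log(X-p)=\log X+O_\epsilon(1)$ estimates) more explicitly than the paper, which simply asserts that the result follows.
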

Observe that the function $-K^2 X^{\delta} + 1-X^{-\delta}$ in increasing up to $\delta = \log(1/K)/\log X$, explaining the initial increase in Figure~\ref{fig:data_and_prediction_AP}.
\begin{proof}
Note that $\phi(p)=p-1$ and that there are $O(1)$ multiples of $p$ up to $X$. Hence  $\sum_{\substack{n\le X \\ (n,p)=1}} b(n)/\phi(p)  = B(X)/(p-1) + O(1/X)$. Moreover, in the arithmetic progression $n \equiv a \bmod p$ there are $2$ elements up to $X$ if $a \le X-p$, and a single element if $X-p<a\le X$. Simplifying $\mathbb{V}_b(X,p)$ using these observations yields
\begin{equation}
\mathbb{V}_b(X,p) = \frac{B(X)}{p-1} -\left(\frac{B(X)}{p-1} \right)^2 + \frac{2}{(p-1)}\sum_{a \le X-p} b(a)b(a+p) + O\left(\frac{1}{X}\right).
\end{equation}
The result now follows from \eqref{Landau_asymp} and our assumption on \eqref{eq:kc}.
\end{proof}

With more work one can make a prediction similar to this one for $\delta \leq  C/\log X$ for any constant $C$. Note that there is no inconsistently between Theorem \ref{thm:HL_correction} and Conjecture \ref{b_variance_conj_q}, as for $\delta = o(1)$ we have $K G(1-\delta) = K+o(1)$. Nonetheless the right hand side of \eqref{HL_prediction} plainly disagrees with $K G(1-\delta)$ if $\delta \neq o(1)$. It would be interesting to understand lower order terms for all $\delta \in (0,1)$, but we do not pursue this here.

\end{appendix}
\bibliographystyle{abbrv}
\bibliography{references}

\Addresses

\end{document}